\def\thesection{\arabic{section}}
\def\theequation{\thesection.\arabic{equation}}
\newcommand{\Om} {\Omega}
\newcommand{\la} {\lambda}
\newcommand{\La} {\Lambda}
\newcommand{\noi} {\noindent}
\markboth{\small } {\small Mixed local and nonlocal $p$-Laplace equation with nonhomogeneity}
\def\theequation{\@arabic{\c@section}.\@arabic{\c@equation}}
\newtheorem{Theorem}{Theorem}[section]
\newtheorem{Lemma}[Theorem]{Lemma}
\newtheorem{Corollary}[Theorem]{Corollary}
\newtheorem{Remark}[Theorem]{Remark}
\newtheorem{Definition}[Theorem]{Definition}
\begin{document}

{\vspace{0.01in}}

\title{Two alternative proofs of weak Harnack inequality for mixed local and nonlocal $p$-Laplace equations with a nonhomogeneity}

\author{Prashanta Garain}

\maketitle

\begin{abstract}\noindent
We study a class of mixed local and nonlocal $p$-Laplace equations with prototype 
\[
-\Delta_p u + (-\Delta_p)^s u = f \quad \text{in } \Omega,
\]  
where $\Omega \subset \mathbb{R}^n$ is bounded and open. We provide sufficient condition on $f$ to ensure weak Harnack inequality with a tail term for sign-changing supersolutions. Two different proofs are presented, avoiding the Krylov--Safonov covering lemma and expansion of positivity: one via the John--Nirenberg lemma, the other via the Bombieri--Giusti lemma. To our knowledge, these approaches are new, even for $p = 2$ with $f \equiv 0$, and include a new proof of the reverse H\"older inequality for supersolutions. Further, we establish Harnack inequality for solutions by first deriving a local boundedness result, together with a tail estimate and an initial weak Harnack inequality.
\end{abstract}

\maketitle

\noi {Keywords: Mixed local and nonlocal p-Laplace equation; John-Nirenberg lemma; Bombieri-Giusti lemma; Local
boundedness; Reverse H\"older inequality; Harnack inequality.}

\noi{\textit{2020 Mathematics Subject Classification: 35B45, 35B65, 35D30, 35J92, 35R11.}}

\bigskip

\tableofcontents

\section{Introduction and main results}
We study a class of mixed local and nonlocal partial and integro-differential equations of the form
\begin{equation}\label{meqn}
-\mathrm{div}(\mathcal{A}(x, \nabla u)) + \mathcal{L}u = f \quad \text{in } \Omega,
\end{equation}
where $\Omega \subset \mathbb{R}^n$ is a bounded open set. We assume $0 < s < 1 < p < n$ and $f \in L_{\mathrm{loc}}^{\frac{q}{p}}(\Omega)$ with $q > n$. The map $\mathcal{A} : \Omega \times \mathbb{R}^n \to \mathbb{R}^n$ is a Carath\'eodory function satisfying the growth and coercivity conditions
\begin{equation}\label{A}
|\mathcal{A}(x,\xi)| \leq C_2 |\xi|^{p-1}, \quad \mathcal{A}(x,\xi) \xi \geq C_1 |\xi|^p,
\end{equation}
for almost every $x \in \Omega$ and all $\xi \in \mathbb{R}^n$, with positive constants $C_1, C_2$. 

The operator $\mathcal{L}$ is the nonlocal $p$-Laplace operator defined by
\begin{equation}\label{L}
\mathcal{L}(u)(x) = \mathrm{P.V.} \int_{\mathbb{R}^n} |u(x) - u(y)|^{p-2} (u(x) - u(y)) K(x,y) \, dy,
\end{equation}
where P.V. stands for the Cauchy principal value, and the kernel $K$ is measurable, symmetric in $x$ and $y$, and satisfies for some $\Lambda \geq 1$
\begin{equation}\label{K}
\frac{\Lambda^{-1}}{|x-y|^{n + ps}} \leq K(x,y) \leq \frac{\Lambda}{|x-y|^{n + ps}} \quad \text{for almost every } x,y \in \mathbb{R}^n.
\end{equation}

When $\mathcal{A}(x,\xi) = |\xi|^{p-2}\xi$, the operator $\mathrm{div}(\mathcal{A}(x, \nabla u))$ reduces to the $p$-Laplace operator $\Delta_p u$. Similarly, if $K(x,y) = |x-y|^{-n - ps}$, then $\mathcal{L}$ coincides with the fractional $p$-Laplace operator $(-\Delta_p)^s$. Consequently, equation \eqref{meqn} generalizes the mixed local and nonlocal $p$-Laplace equation
\begin{equation}\label{meqn1}
-\Delta_p u + (-\Delta_p)^s u = f.
\end{equation}

Mixed local and nonlocal PDEs have recently garnered considerable attention due to their applications in diverse physical models (see \cite{Valpi} and references therein).

When $f \equiv 0$, various regularity results have been established for the equation \eqref{meqn1}. For the linear case $p=2$, Harnack inequalities for globally nonnegative solutions have been obtained via probabilistic methods (see \cite{Chen, Fo}). In the quasilinear setting, several regularity results including local boundedness and Harnack inequalities have been proven using purely analytic techniques in \cite{GKK, GKtams}.

For nontrivial right-hand sides $f$, recent advances include results for $p=2$ where $f=gu$ with $g$ belonging to the Kato class, establishing Harnack inequalities for globally nonnegative solutions of \eqref{meqn1} via probabilistic arguments \cite{Siva}. Additionally boundedness and H\"older regularity results appear in \cite{Valcpde, Valmz, Valjde}. Weak Harnack inequalities for sign-changing supersolutions with bounded, nonpositive $g$ are shown in \cite{Biagicvpde}.

When $p \neq 2$, assuming various conditions on $f$, boundedness, higher H\"older and $C^{1,\alpha}$ regularity have been obtained in \cite{Hong, FMma, GLcvpde, Zhangbdd}. Lipschitz potential estimates for $f \in L^1_{\mathrm{loc}}(\Omega)$ are found in \cite{Biswasnodea}, while global gradient estimates under divergence-form $f$ appear in \cite{Byuncvpde2}. For measure data, potential and gradient estimates are available in \cite{Byuncvpde, Iwonajlms}. Moreover, weak Harnack inequalities for nonlinear source terms of the form $f = g(x)|u|^{p-2} u$ with bounded, nonpositive $g$ are presented in \cite{Garainna}.

In this work, we provide sufficient conditions on the nonhomogeneity $f$ that guarantees weak Harnack inequality, local boundedness and Harnack inequality for sign-changing solutions of \eqref{meqn}. Our main results in this article reads as follows:

First, we have the following weak Harnack inequality, which is known for the equation \eqref{meqn1} with $f\equiv 0$ in \cite[Theorem 8.4]{GKtams}.

\begin{Theorem}[Weak Harnack Inequality]\label{mthm}
Let $x_0 \in \mathbb{R}^n$ and $0 < r \leq 1$ satisfy $r < \frac{R}{2}$. Assume $0 < s < 1 < p < n$ and $f \in L^{\frac{q}{p}}(B_R(x_0))$ with $q > n$. Suppose $u$ is a weak supersolution of \eqref{meqn} with $u \geq 0$ in $B_R(x_0) \subset \Omega$. Then, for every $0 < \eta < \kappa(p-1)$, there exists a constant
$
c = c(n,p,q,s,C_1,C_2,\Lambda,\eta) > 0
$
such that
\begin{equation}\label{mthmest}
\left( \fint_{B_{\frac{r}{2}}(x_0)} u^\eta \right)^{\frac{1}{\eta}} \leq c \left( \inf_{B_{\frac{r}{2}}(x_0)} u + r^{p'} R^{-p'} \mathrm{Tail}(u_-; x_0, R) + r^{\frac{p-n}{p-1}} R^{\frac{n(q-p)}{q(p-1)}} \|f\|_{L^{\frac{q}{p}}(B_R(x_0))}^{\frac{1}{p-1}} \right),
\end{equation}
where the tail term $\mathrm{Tail}(u_-; x_0, R)$ is defined in \eqref{tail}.
\end{Theorem}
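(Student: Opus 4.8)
The plan is to run Moser's iteration for the mixed operator and to replace the usual crossover between negative and positive exponents (normally done via the Krylov--Safonov covering lemma or expansion of positivity) by either the John--Nirenberg lemma or the Bombieri--Giusti lemma. The first step is a reduction: since $u$ solves an inhomogeneous equation and is only assumed nonnegative inside $B_R(x_0)$, I replace $u$ by
\[
\bar u := u + d, \qquad d := r^{p'}R^{-p'}\,\Tail(u_-;x_0,R) + r^{\frac{p-n}{p-1}}R^{\frac{n(q-p)}{q(p-1)}}\,\|f\|_{L^{\frac{q}{p}}(B_R(x_0))}^{\frac{1}{p-1}},
\]
so that $\bar u \ge d > 0$ in $B_R(x_0)$ (if $d=0$ one argues with $d+\delta$ and lets $\delta\downarrow0$). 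Since $\essinf_{B_{r/2}(x_0)}\bar u\le\essinf_{B_{r/2}(x_0)}u+d$, it suffices to prove the homogeneous-looking inequality $\big(\fint_{B_{r/2}(x_0)}\bar u^{\eta}\big)^{1/\eta}\le c\,\essinf_{B_{r/2}(x_0)}\bar u$, and this is what the iteration will produce. The role of $d$ is that, after the standard decomposition of $\R^n$ into $B_R(x_0)$ and its complement for the nonlocal part, and after H\"older's inequality with exponent $q/p$ together with the Sobolev embedding for the term containing $f$ (this is where $q>n$ enters), the nonlocal ``tail'' contributions and the contributions of $f$ are dominated, using $\bar u\ge d$, by the local leading terms; the exact powers of $r$ and $R$ in $d$ are dictated by the homogeneity of $\mathcal L$ and by that Sobolev--H\"older estimate, which is exactly why the two correction terms in \eqref{mthmest} take the form they do.

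The heart of the argument consists of the two families of energy inequalities for $\bar u$. Testing the weak supersolution inequality with $\varphi=\bar u^{\beta}\psi^{p}$, $\psi$ a standard cutoff, and using the coercivity and growth of $\mathcal A$ together with the elementary algebraic inequalities for the nonlocal kernel, I obtain for $\beta<0$, writing $\theta:=\frac{\beta+p-1}{p}$, a Caccioppoli inequality of the form $\int|\nabla\bar u^{\theta}|^{p}\psi^{p}\le C(\beta)\int\bar u^{\beta+p-1}|\nabla\psi|^{p}$ (the tail and $f$ terms already controlled), and, in the borderline case $\beta=1-p$, the logarithmic estimate $\fint_{B_{2\rho}(y)}|\log\bar u-a_{y,\rho}|^{p}\le C$ for all $B_{2\rho}(y)\subset B_{r}(x_0)$, where $a_{y,\rho}:=\fint_{B_{2\rho}(y)}\log\bar u$; that is, $\log\bar u\in\mathrm{BMO}(B_{r}(x_0))$ with a universally bounded seminorm. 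Combining the Caccioppoli inequality with the Sobolev inequality and iterating over shrinking balls between $B_{3r/4}(x_0)$ and $B_{r/2}(x_0)$ yields, on one side, $\essinf_{B_{r/2}(x_0)}\bar u\ge c\big(\fint_{B_{3r/4}(x_0)}\bar u^{-t_0}\big)^{-1/t_0}$ for a fixed small $t_0>0$ (iterating $\beta\to-\infty$), and, on the other side, letting the exponent $\alpha:=\beta+p-1$ run through $(0,p-1)$ and performing a final Sobolev step followed by H\"older's inequality, the reverse H\"older inequality for supersolutions
\[
\Big(\fint_{B_{r/2}(x_0)}\bar u^{\eta}\Big)^{1/\eta}\le c\,\Big(\fint_{B_{3r/4}(x_0)}\bar u^{t_0}\Big)^{1/t_0}\qquad\text{for every }0<\eta<\kappa(p-1),\ 0<t_0\le\eta,
\]
with $\kappa>1$ the Sobolev ratio; the ceiling $\kappa(p-1)$ appears because the admissible range of $\beta$ in this iteration is precisely $(1-p,0)$, so $\alpha$ stays below $p-1$, and the last Sobolev step multiplies the top exponent by $\kappa$.

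It remains to bridge the two estimates, and this is where the two proofs part. In the first, the John--Nirenberg lemma applied to $\log\bar u\in\mathrm{BMO}(B_{3r/4}(x_0))$ gives a small $t_0=t_0(n,p,q,s,C_1,C_2,\Lambda)>0$ with $\big(\fint_{B_{3r/4}(x_0)}\bar u^{t_0}\big)\big(\fint_{B_{3r/4}(x_0)}\bar u^{-t_0}\big)\le C$; chaining this with the two inequalities above proves \eqref{mthmest}. In the second, one notes that the reverse H\"older inequality for supersolutions (taken uniformly for exponents in a small interval $(0,t_0]$ and over all intermediate radii) together with the one-sided distributional bound $|B_{r}(x_0)\cap\{\log\bar u>a+\lambda\}|\le C r^{n}/\lambda$ for $\lambda>0$, where $a:=\fint_{B_{r}(x_0)}\log\bar u$ (a direct consequence of the logarithmic estimate), are exactly the hypotheses of the Bombieri--Giusti lemma for $e^{-a}\bar u$; the lemma then outputs $\big(\fint_{B_{r/2}(x_0)}\bar u^{\eta}\big)^{1/\eta}\le c\,e^{a}$. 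Applying the same lemma to $e^{a}/\bar u$, now using the negative-exponent Caccioppoli inequality and the symmetric bound $|B_{r}(x_0)\cap\{\log\bar u<a-\lambda\}|\le C r^{n}/\lambda$, gives $\essinf_{B_{r/2}(x_0)}\bar u\ge c\,e^{a}$, and combining the two again yields \eqref{mthmest}. In both cases the proof ends by undoing the substitution $\bar u=u+d$.

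I expect the main obstacle to be the energy estimates of the second paragraph for the mixed operator, namely arranging the Caccioppoli and logarithmic inequalities so that the nonlocal and inhomogeneous terms are swallowed with the scale-correct powers of $r$ and $R$. Concretely one must (a) handle the nonlocal ``cross'' interactions in which $x$ lies in the support of the cutoff while $y$ lies outside $B_{R}(x_0)$, where $|\bar u(x)-\bar u(y)|^{p-2}(\bar u(x)-\bar u(y))$ is controlled by $\bar u(x)^{p-1}+u_-(y)^{p-1}$ and the $y$-integral against $K$ produces precisely $R^{-ps}\Tail(u_-;x_0,R)^{p-1}$, and (b) absorb this, along with $\int f\,\bar u^{\beta}\psi^{p}$, into the leading terms with powers that keep the constant of the ensuing infinite iteration finite. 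Getting this bookkeeping right --- more than any single inequality --- is what pins down the form of $d$ and hence the precise exponents in \eqref{mthmest}.
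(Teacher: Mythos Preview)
Your proposal is correct and follows essentially the same approach as the paper: both proofs proceed via the energy/Caccioppoli inequalities obtained by testing with $\bar u^{\beta}\psi^{p}$, the logarithmic estimate at $\beta=1-p$, Moser iteration for the inverse estimate and the reverse H\"older inequality up to the ceiling $\kappa(p-1)$, and then the crossover via John--Nirenberg or Bombieri--Giusti exactly as you describe. Two small slips worth correcting: the cross-term tail contribution from $\R^n\setminus B_R(x_0)$ scales as $R^{-p}\,\Tail(u_-;x_0,R)^{p-1}$ (the mixed tail of \eqref{tail}, not $R^{-ps}$), and Chebyshev applied to the $L^p$ logarithmic estimate yields the distributional bound with $1/\lambda^{p}$ rather than $1/\lambda$, which is what feeds into the Bombieri--Giusti lemma with $\gamma=p$.
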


For the homogeneous case $f \equiv 0$, the classical proof of Theorem \ref{mthm} relies heavily on the Krylov–Safonov covering lemma and the expansion of positivity technique. Here, we provide two different proofs that circumvent these classical tools and the De Giorgi framework. Moreover, as far as we are aware, these proofs are new even for $p=2$ with $f\equiv 0$.

The first approach adapts Moser's iteration scheme \cite{Moser} to the nonlinear nonlocal setting, using the John–Nirenberg inequality (Theorem~\ref{JN-lem}). This method, previously employed in the purely local \cite{Moser} and purely nonlocal contexts \cite{Chakeretal}, centers on a logarithmic estimate (Lemma~\ref{loglem}) connecting integral averages with positive and negative powers of supersolutions (see \eqref{neg-poseqn}).

A key ingredient is controlling averages of negative powers through the infimum of the supersolution (Lemma~\ref{Cor}), leading to a preliminary weak Harnack inequality (estimate~\eqref{pwh1}). This is then refined using a reverse H\"older inequality (Lemma~\ref{RevH}) to complete the proof of Theorem~\ref{mthm}.

The second approach leverages a modified Bombieri–Giusti lemma (Lemma~\ref{Bom-Giu}) originally developed in \cite{BoGi}, which bypasses both the John–Nirenberg and Krylov–Safonov lemmas. This powerful tool has been applied in various local and nonlocal problems \cite{Kasscpde, GKjde, Ahmedrmi, Nakamuracvpde}. The proof integrates three key components: a supremum estimate for inverse of supersolutions (Lemma~\ref{Cor}), a reverse H\"older inequality (Lemma~\ref{RevH}), and a tailored logarithmic estimate (Lemma~\ref{logest2}).

Both approaches rest on precise energy estimates (see Lemma~\ref{inveng}, equations \eqref{negeng1} and \eqref{negeng2}), obtained via two crucial algebraic inequalities \eqref{alg2} and \eqref{alg1} respectively, which handles the nonlinear structure and coupling between local and nonlocal terms. Such energy estimates combined with the Sobolev embeddings and Moser's iteration technique proved Lemma \ref{Cor} and Lemma \ref{RevH}. Additionally, a weighted Poincaré inequality (Lemma~\ref{wgtPoin}) is instrumental in deriving the final logarithmic estimates in Lemma \ref{logest2}. Notably, the proof of the reverse H\"older inequality in Lemma~\ref{RevH} differs from that in \cite[estimate (8.16)]{GKtams}.

Our second main result establishes local boundedness, and its proof follows the approach of \cite[Theorem 4.2]{GKtams}, originally carried out for the case $f \equiv 0$ in equation \eqref{meqn1}. Further, local boundedness result for nonzero $f$ can be found in \cite[Theorem 1.2]{GLcvpde} when $p\geq 2$.
\begin{Theorem}[Local boundedness]\label{mthmbdd}
Let $x_0 \in \mathbb{R}^n$ and $0 < r \leq 1$ {be such that $B_R(x_0)\subset\Omega$ with $r\leq R$}. Assume that $0 < s < 1 < p < n$, $\kappa = \frac{n}{n-p}$, and $f \in L^{\frac{q}{p}}(B_R(x_0))$ with $q > n$. Suppose that $u$ is a weak subsolution of \eqref{meqn}. Then, there exists a constant 
$
c = c(n,p,q,s,C_1,C_2,\Lambda) > 0
$
such that
\begin{equation}\label{mthmbddest}
\sup_{B_{\frac{r}{2}}(x_0)} u
\leq 
\delta \, \mathrm{Tail}\Big(u_+; x_0, \frac{r}{2}\Big)
+ c \, \delta^{-\frac{(p-1)\kappa}{p(\kappa-1)}}
\left( \fint_{B_r(x_0)} u_+^p \, dx \right)^{\frac{1}{p}}
+ r^{\frac{p-n}{p-1}} R^{\frac{n(q-p)}{q(p-1)}}
\|f\|_{L^{\frac{q}{p}}(B_R(x_0))}^{\frac{1}{p-1}},
\end{equation}
where the tail term $\mathrm{Tail}\big(u_+; x_0, \frac{r}{2}\big)$ is defined in \eqref{tail}.
\end{Theorem}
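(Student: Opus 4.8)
The plan is to run a Moser-type iteration on the subsolution $u$, testing the weak formulation of \eqref{meqn} with powers of $u_+$ truncated at suitable levels, in the spirit of \cite[Theorem 4.2]{GKtams}. First I would fix concentric balls $B_{r/2}(x_0)\subset B_r(x_0)$ and set up a sequence of radii $r_j = \frac{r}{2}(1+2^{-j})$ shrinking from $B_r$ to $B_{r/2}$, together with cutoff functions $\varphi_j$ supported in $B_{r_j}$ and equal to $1$ on $B_{r_{j+1}}$, with $|\nabla\varphi_j|\lesssim 2^j/r$. Testing with $\varphi_j^p (u_+ + d)^{\beta}$ for exponents $\beta$ forming a geometric sequence $\beta_j = p(\kappa^j - 1)$ (where $\kappa = \frac{n}{n-p}$ is the Sobolev exponent), and where $d$ is a normalization constant to be chosen, one obtains an energy inequality for $v_j := (u_+ + d)^{(\beta_j+p)/p}$. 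The local term $-\mathrm{div}(\mathcal{A}(x,\nabla u))$ contributes the gradient energy $\int |\nabla(\varphi_j v_j)|^p$ after absorbing cutoff errors via Young's inequality and the structure conditions \eqref{A}; the nonlocal term $\mathcal{L}u$ contributes a nonnegative quantity plus a tail error that, after the standard splitting into near/far interactions, is controlled by $\Tail(u_+; x_0, r/2)$ and lower-order terms — this is where the coupling between the two operators must be handled carefully, exactly as flagged in the discussion preceding the theorem (the algebraic inequalities \eqref{alg2}, \eqref{alg1} and the energy bounds of Lemma~\ref{inveng} are the relevant tools, though here applied to positive rather than negative powers).

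The nonhomogeneity is treated by estimating $\int_{B_{r_j}} |f|\,\varphi_j^p (u_+ + d)^{\beta_j + p - 1}$. Using Hölder's inequality with exponents $\frac{q}{p}$ and its conjugate, followed by a Sobolev embedding to absorb the resulting $L^{p^*}$-type norm of $v_j$ into the left-hand gradient energy (this is possible precisely because $q > n$, so the dual Lebesgue exponent is strictly below the Sobolev exponent and a small parameter can be extracted), one gets a contribution of the form $r^{\frac{p-n}{p-1}} R^{\frac{n(q-p)}{q(p-1)}} \|f\|_{L^{q/p}(B_R(x_0))}^{1/(p-1)}$ once the scaling is tracked. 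Choosing the normalization $d = r^{\frac{p-n}{p-1}} R^{\frac{n(q-p)}{q(p-1)}}\|f\|_{L^{q/p}(B_R)}^{1/(p-1)} + \delta^{-1}\big(\text{tail + average term}\big)$ — or more precisely balancing $d$ against both the $f$-term and the tail — makes the iteration homogeneous.

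After applying the Sobolev inequality on each ball, the energy inequality takes the form
\begin{equation*}
\left( \fint_{B_{r_{j+1}}} (u_+ + d)^{\kappa(\beta_j + p)} \right)^{\frac{1}{\kappa(\beta_j+p)}}
\leq \big( C\, 2^{j p} \big)^{\frac{1}{\beta_j + p}}
\left( \fint_{B_{r_j}} (u_+ + d)^{\beta_j + p} \right)^{\frac{1}{\beta_j + p}},
\end{equation*}
and iterating over $j$ with $\beta_j + p = p\kappa^j$ gives a convergent product $\prod_j (C 2^{jp})^{1/(p\kappa^j)} < \infty$, yielding in the limit
\begin{equation*}
\sup_{B_{r/2}} (u_+ + d) \leq c \left( \fint_{B_r} (u_+ + d)^p \right)^{\frac{1}{p}}.
\end{equation*}
Unravelling the definition of $d$, splitting the right side via $(a+b)^p \leq 2^{p-1}(a^p + b^p)$, and absorbing the $\delta$-dependence produces the stated estimate \eqref{mthmbddest} with the exponent $-\frac{(p-1)\kappa}{p(\kappa-1)}$ on $\delta$ emerging from the standard interpolation trick used to reinstate the average of $u_+^p$ (rather than $u_+ + d$) on the right; the constant $c$ depends only on $n,p,q,s,C_1,C_2,\Lambda$ as claimed. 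The main obstacle I anticipate is the bookkeeping of scaling exponents so that the $f$-term and the tail term come out with exactly the powers of $r$ and $R$ displayed — in particular verifying that the Sobolev-absorption step for the $f$-integral is quantitatively compatible with the geometric growth of $\beta_j$ and does not accumulate across iterations; a secondary technical point is ensuring the nonlocal tail error at each step can be summed (it should, since only the first few steps matter after factoring the tail into $d$).
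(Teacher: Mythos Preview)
The paper takes a different route from yours: it runs a De Giorgi level-set iteration, not a Moser power iteration. The relevant energy estimate is Lemma~\ref{subeng}, a Caccioppoli inequality for the truncation $w=(u-k)_+$, and the argument then follows \cite[Theorem~4.1]{GKtams} verbatim: one picks increasing levels $k_j\nearrow\bar k$ on shrinking balls $B_{r_j}$, sets $A_j=\fint_{B_{r_j}}(u-k_j)_+^p\,dx$, derives a nonlinear recursion $A_{j+1}\le c\,b^{\,j}A_j^{1+\beta}$ from Lemma~\ref{subeng} and Sobolev, and chooses $\bar k$ so that $A_0$ is small enough to force $A_j\to 0$. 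The parameter $\delta$ and the exponent $-\tfrac{(p-1)\kappa}{p(\kappa-1)}$ enter because the tail contribution in Lemma~\ref{subeng} is controlled only once the level satisfies $k\ge\delta\,\Tail(u_+;x_0,r/2)$, and the smallness requirement on $A_0$ then forces the $\delta^{-\gamma}$ factor on the $L^p$-average.

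Your Moser scheme could in principle be made to work, but two of the steps as written would fail. First, the tools you cite --- Lemma~\ref{inveng} and the inequalities \eqref{alg1}, \eqref{alg2} --- are for \emph{supersolutions} tested against $v^{-\epsilon}$ with $\epsilon>0$; for a subsolution tested against $v^{\beta}$ with large $\beta>0$ one needs an algebraic inequality of the opposite flavour (schematically $J_p(a-b)(\tau_1^p a^\beta-\tau_2^p b^\beta)\ge c\,|\tau_1 a^{\gamma}-\tau_2 b^{\gamma}|^p-C|\tau_1-\tau_2|^p(a^{p\gamma}+b^{p\gamma})$ with $\gamma=(\beta+p-1)/p$), which is not recorded in the paper and does not follow from \eqref{alg1}--\eqref{alg2} by flipping a sign. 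Second, your handling of $\delta$ is off: packing $\delta^{-1}\Tail$ into $d$ produces $\delta^{-1}\Tail$ on the right, not $\delta\,\Tail$, and including the $L^p$-average in $d$ makes the final bound circular. In a Moser iteration the tail typically appears with a fixed coefficient, and the interpolated form $\delta\,\Tail+c\,\delta^{-\gamma}(\fint u_+^p)^{1/p}$ must be recovered \emph{a posteriori} by a covering-plus-absorption step (exactly the mechanism used in the proof of Theorem~\ref{mthm2}), not through the choice of $d$. Given that Lemma~\ref{subeng} is already in hand, the De Giorgi route is the short path here.
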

Finally, we have the following Harnack inequality, which is known for the equation \eqref{meqn1} with $f\equiv 0$ in \cite[Theorem 8.3]{GKtams}. The proof is done by combining the above local boundedness result along with a tail estimate and a preliminary weak Harnack inequality.
\begin{Theorem}[Harnack Inequality]\label{mthm2}
Let $x_0 \in \mathbb{R}^n$ and $0 < r \leq 1$ satisfy $r < \frac{R}{2}$. Assume that $0 < s < 1 < p < n$ and $f \in L^{\frac{q}{p}}(B_R(x_0))$ with $q > n$. Suppose that $u$ is a weak solution of \eqref{meqn} with $u \geq 0$ in $B_R(x_0) \subset \Omega$. Then, there exists a constant 
$
c = c(n,p,q,s,C_1,C_2,\Lambda) > 0
$
such that
\begin{equation}\label{mthm2est}
\sup_{B_{\frac{r}{2}}(x_0)} u 
\leq 
c \left( 
\inf_{B_r(x_0)} u 
+ r^{p'} R^{-p'} \, \mathrm{Tail}(u_-; x_0, R) 
+ r^{\frac{p-n}{p-1}} R^{\frac{n(q-p)}{q(p-1)}}\|f\|_{L^{\frac{q}{p}}(B_R(x_0))}^{\frac{1}{p-1}} 
\right),
\end{equation}
where the tail term $\mathrm{Tail}(u_-; x_0, R)$ is defined in \eqref{tail}.
\end{Theorem}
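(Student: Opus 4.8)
The plan is to deduce Theorem~\ref{mthm2} by combining the local boundedness result (Theorem~\ref{mthmbdd}) with a weak Harnack-type estimate, using a standard covering/iteration argument to absorb the unfavorable dependence on $\delta$ in \eqref{mthmbddest}. First, I would apply Theorem~\ref{mthmbdd} on a ball $B_\rho(x_0)$ with $\rho \le r$: since $u \ge 0$ in $B_R(x_0)$, we have $u_+ = u$ there, so
\[
\sup_{B_{\rho/2}(x_0)} u \le \delta\,\mathrm{Tail}\Big(u;x_0,\tfrac{\rho}{2}\Big) + c\,\delta^{-\frac{(p-1)\kappa}{p(\kappa-1)}}\Big(\fint_{B_\rho(x_0)} u^p\,dx\Big)^{\frac1p} + \rho^{\frac{p-n}{p-1}}R^{\frac{n(q-p)}{q(p-1)}}\|f\|_{L^{q/p}(B_R(x_0))}^{\frac{1}{p-1}}.
\]
The tail $\mathrm{Tail}(u;x_0,\rho/2)$ is over $\mathbb{R}^n\setminus B_{\rho/2}(x_0)$ and contains both the part inside $B_R$ (where $u\ge 0$, in fact bounded once we know local boundedness on a slightly larger ball) and the part outside, which is controlled by $\mathrm{Tail}(u_-;x_0,R)$ since $u$ is only assumed nonnegative inside $B_R$. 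I would split the tail accordingly, estimate the inner annulus $B_R\setminus B_{\rho/2}$ by $\sup_{B_{3r/4}(x_0)} u$ (this is where the covering comes in) plus a contribution on $B_R\setminus B_{3r/4}$ bounded again in terms of the far tail, and absorb the $\sup$ term using the smallness of $\delta$.

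\smallskip

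The second ingredient is a preliminary weak Harnack inequality, i.e.\ estimate \eqref{pwh1}/Theorem~\ref{mthm} for some fixed exponent $\eta>0$ (any $\eta\in(0,\kappa(p-1))$ works, but $\eta=p$ or $\eta$ small suffices here), which gives
\[
\Big(\fint_{B_{\rho}(x_0)} u^{\eta}\,dx\Big)^{\frac1\eta} \le c\Big(\inf_{B_{\rho/2}(x_0)} u + \rho^{p'}R^{-p'}\mathrm{Tail}(u_-;x_0,R) + \rho^{\frac{p-n}{p-1}}R^{\frac{n(q-p)}{q(p-1)}}\|f\|_{L^{q/p}(B_R(x_0))}^{\frac1{p-1}}\Big).
\]
The classical difficulty is that Theorem~\ref{mthmbdd} controls $\sup u$ by an $L^p$ average, while the weak Harnack gives an $L^\eta$ average with $\eta$ possibly less than $p$; to bridge this gap I would use the standard interpolation trick: write $\fint u^p \le (\sup u)^{p-\eta}\fint u^\eta$, so that
\[
\Big(\fint_{B_\rho} u^p\Big)^{\frac1p} \le (\sup_{B_\rho} u)^{1-\frac\eta p}\Big(\fint_{B_\rho} u^\eta\Big)^{\frac1p},
\]
and then feed this into the boundedness estimate and use Young's inequality to absorb a small fraction of the $\sup$ term. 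This requires iterating the boundedness estimate over a chain of nested balls $B_{\rho_j}$ with $\rho_j = (\tfrac12 + 2^{-j-1})r$, $j\ge 0$, so that $\rho_0=r$, $\rho_\infty=r/2$, choosing $\delta=\delta_j$ geometrically small at each step, and summing the resulting geometric series — the classical Moser/De Giorgi absorption lemma. The exponents $\rho_j^{(p-n)/(p-1)}$ and the tail factors remain comparable to $r^{(p-n)/(p-1)}$ and $\mathrm{Tail}(u_-;x_0,R)$ throughout since all radii lie between $r/2$ and $r$, so they come out of the sum with only a change in the constant.

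\smallskip

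Combining these: after the iteration I obtain $\sup_{B_{r/2}(x_0)} u \le c\big((\fint_{B_r} u^\eta)^{1/\eta} + r^{p'}R^{-p'}\mathrm{Tail}(u_-;x_0,R) + r^{\frac{p-n}{p-1}}R^{\frac{n(q-p)}{q(p-1)}}\|f\|^{1/(p-1)}\big)$, where I have also used that the inner part of the tail over $B_R\setminus B_{r/2}$ is bounded by $\sup_{B_{3r/4}}u$ times a constant depending on $R,r$ — handled by the same covering/iteration since $r<R/2$ keeps the geometric factors bounded. Then applying the preliminary weak Harnack replaces $(\fint_{B_r}u^\eta)^{1/\eta}$ by $c(\inf_{B_{r/2}}u + \text{tail} + \text{data})$; since $B_{r/2}\subset B_r$ and we want $\inf_{B_r}u$ on the right (which is smaller than $\inf_{B_{r/2}}u$), this is exactly the stated \eqref{mthm2est}. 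The main obstacle I anticipate is the bookkeeping of the tail term through the covering argument: one must carefully verify that splitting $\mathrm{Tail}(u;x_0,\rho/2)$ into a near part (absorbed into $\sup u$, itself controlled after iteration) and a far part (bounded by $\mathrm{Tail}(u_-;x_0,R)$ using $r<R/2$ so that $\rho/2 < R$ and the weight $|x-x_0|^{-n-s p (p-1)/\ldots}$ comparisons lose only constants) produces precisely the scaling factor $r^{p'}R^{-p'}$ and not something worse; the purely local Moser absorption is routine, but ensuring the nonlocal tail does not degrade the estimate — and that the $\delta$-dependent geometric choices are compatible with the tail's scaling — is the delicate point.
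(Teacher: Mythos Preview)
Your overall architecture --- local boundedness, then interpolate $\bigl(\fint u^p\bigr)^{1/p}\le (\sup u)^{1-t/p}\bigl(\fint u^t\bigr)^{1/p}$, absorb via Young and an iteration lemma, and finish with the preliminary weak Harnack --- is exactly the paper's route. But there is a genuine gap in how you propose to handle the tail $\mathrm{Tail}(u_+;x_0,\rho/2)$.

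Your splitting argument does not work. Over $\mathbb{R}^n\setminus B_R$ the integrand is $u_+^{p-1}$, not $u_-^{p-1}$; there is no reason the positive tail outside $B_R$ is controlled by $\mathrm{Tail}(u_-;x_0,R)$ --- these are independent quantities. Over the annulus $B_R\setminus B_{3r/4}$ you have $u_+=u\ge 0$, but local boundedness only controls $u$ on balls of radius comparable to $r$, not on the much larger ball $B_R$ (recall $r<R/2$), so you have no a~priori bound on $u$ there either. In short, the positive tail cannot be estimated by elementary geometric considerations.

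The missing ingredient is a \emph{PDE-based tail estimate} (the paper's Lemma~\ref{tailest}): since $u$ is a weak \emph{solution} (not merely a subsolution), one may test the equation with $(u-2M)\psi^p$, $M=\sup_{B_\rho}u$, $\psi$ a cutoff in $B_\rho$, and the resulting identity yields
\[
\mathrm{Tail}(u_+;x_0,\rho)\le c\,\sup_{B_\rho}u + c\,\rho^{p'}R^{-p'}\mathrm{Tail}(u_-;x_0,R) + c\,\rho^{\frac{p-n}{p-1}}R^{\frac{n(q-p)}{q(p-1)}}\|f\|_{L^{q/p}(B_R)}^{\frac{1}{p-1}}.
\]
This converts the positive tail into $\sup_{B_\rho}u$ (which is then absorbed by choosing $\delta$ small and iterating) plus the negative tail and data with the correct scaling $\rho^{p'}R^{-p'}$. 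With this lemma in hand, the remainder of your plan goes through essentially as you wrote it; the paper uses a $(\sigma',\sigma)$-covering together with Lemma~\ref{ite} rather than your dyadic $\rho_j$, but that difference is cosmetic.
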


\subsection*{Notation and Assumptions}
Throughout the rest of the paper, we shall use the following conventions unless stated otherwise:
\begin{itemize}
    \item We fix $0 < s < 1 < p < n$ and define $\kappa = \frac{n}{n-p}$.
    \item The domain $\Omega$ denotes a bounded open subset of $\mathbb{R}^n$.
    \item The source function satisfies $f \in L_{\mathrm{loc}}^{\frac{q}{p}}(\Omega)$ with $q > n$.
    \item For $k \in \mathbb{R}$, we denote $k_\pm = \max\{0, \pm k\}$.
    \item The conjugate exponent of $l>1$ is denoted by $l'$ and defined by $l'=\frac{l}{l-1}$.
    \item For a point $x_0 \in \mathbb{R}^n$ and radius $r > 0$, we denote the ball
    \[
    B_r(x_0) = \{ y \in \mathbb{R}^n : |y - x_0| < r \}.
    \]
    \item The underlying space of a weak subsolution or supersolution or solution of \eqref{meqn} in $\Omega$ will be $W_{\mathrm{loc}}^{1,p}(\Omega) \cap L^{p-1}_{ps}(\mathbb{R}^n)$.
    \item We denote by $d\mu = K(x,y) \, dx dy$ is defined using the kernel $K$ from \eqref{K}.
    \item The Lebesgue measure of a measurable set $S$ is denoted by $|S|$.
    \item The barred integral symbol denotes the corresponding integral average.
    \item For a measurable function $u$ and weight $\phi$, we write
    \[
    (u)_{\phi, B_r(x_0)} = \frac{\int_{B_r(x_0)} u \phi \, dx}{\int_{B_r(x_0)} \phi \, dx}.
    \]
    \item The function $J_p : \mathbb{R} \to \mathbb{R}$ is defined by
    \[
    J_p(t) = |t|^{p-2} t.
    \]
    \item The constants $C_1,C_2$ and $\La$ will be the same constants as found in \eqref{A} and \eqref{K} respectively.
    \item If a constant $C$ depends on parameters $r_1, r_2, \dots, r_k$, we write
    \[
    C = C(r_1, r_2, \dots, r_k).
    \]
\end{itemize}

\subsection*{Organization of the paper}

The paper is organized as follows. Section 2 introduces the functional framework and recalls auxiliary results. In Section 3, we establish the required energy estimates. Section 4 is devoted to proving supersolution and tail estimates. Finally, Section 5 contains the proofs of the main results.
\section{Functional setting and auxiliary results}
\subsection{Functional setting}
Let $\Omega \subset \mathbb{R}^n$ be a bounded open set and let $0 < s < 1 < p < \infty$. We consider the space $W^{1,p}(\Om)$ as the standard Sobolev space \cite{Evans}. In order to study mixed local and nonlocal problems, it is convenient to consider the space $W_0^{1,p}(\Omega)$ (see \cite{Hong} and the references therein) defined as the closure of $C_c^\infty(\Om)$ with respect to the global norm $\Big(\int_{\mathbb{R}^n}|\nabla u|^p\,dx\Big)^\frac{1}{p}$.  
In addition, we make use of the fractional Sobolev space $W^{s,p}(\Omega)$, defined by (see \cite{Hitchhiker'sguide})
\[
W^{s,p}(\Omega) = \left\{ u \in L^p(\Omega) : \int_{\Omega} \int_{\Omega} \frac{|u(x) - u(y)|^p}{|x - y|^{n + ps}} \, dx \, dy < \infty \right\},
\]
and equipped with the norm
\[
\|u\|_{W^{s,p}(\Omega)} = \left( \int_{\Omega} |u(x)|^p \, dx + \int_{\Omega} \int_{\Omega} \frac{|u(x) - u(y)|^p}{|x - y|^{n + ps}} \, dx \, dy \right)^{\frac{1}{p}}.
\]

The fractional Sobolev space with zero boundary values is given by
\[
W_0^{s,p}(\Omega) = \left\{ u \in W^{s,p}(\mathbb{R}^n) : u = 0 \text{ in } \mathbb{R}^n \setminus \Omega \right\}.
\]

For $0 < s \leq 1$, the local Sobolev space $W^{s,p}_{\mathrm{loc}}(\Omega)$ consists of all functions $u$ such that $u \in W^{s,p}(\Omega')$ for every open subset $\Omega' \Subset \Omega$, where $\Omega' \Subset \Omega$ means that the closure $\overline{\Omega'}$ is compactly contained in $\Omega$.  

The next lemma asserts that, under appropriate smoothness conditions on $\Omega$, the classical Sobolev space embeds continuously into the fractional Sobolev space (cf. \cite[Proposition 2.2]{Hitchhiker'sguide}).  

\begin{Lemma}\label{locnon}
Let $\Omega$ be a smooth bounded domain in $\mathbb{R}^n$, with $1 < p < \infty$ and $0 < s < 1$. There exists a positive constant $C = C(n,p,s)$ such that
\[
\|u\|_{W^{s,p}(\Omega)} \leq C \|u\|_{W^{1,p}(\Omega)}
\]
for every $u \in W^{1,p}(\Omega)$.
\end{Lemma}

A related result for fractional Sobolev spaces with zero boundary conditions is given next (see \cite[Lemma 2.1]{Silva}). Unlike Lemma \ref{locnon}, this result holds for any bounded domain since functions in $W_0^{1,p}(\Omega)$ admit a zero extension outside $\Omega$.

\begin{Lemma}\label{locnon1}
Let $\Omega$ be a bounded domain in $\mathbb{R}^n$, with $1 < p < \infty$ and $0 < s < 1$. There exists a positive constant $C = C(n,p,s,\Omega)$ such that
\[
\int_{\mathbb{R}^n} \int_{\mathbb{R}^n} \frac{|u(x) - u(y)|^p}{|x - y|^{n + ps}} \, dx \, dy \leq C \int_{\Omega} |\nabla u|^p \, dx
\]
for every $u \in W_0^{1,p}(\Omega)$.
\end{Lemma}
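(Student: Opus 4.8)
The statement I am asked to prove is Lemma~\ref{locnon1}: for a bounded domain $\Omega \subset \mathbb{R}^n$ with $1 < p < \infty$ and $0 < s < 1$, there is $C = C(n,p,s,\Omega)$ such that the full-space Gagliardo seminorm of $u$ is controlled by $\int_\Omega |\nabla u|^p\,dx$ for every $u \in W_0^{1,p}(\Omega)$.

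\medskip

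\noi\textbf{Proof plan.} The plan is to reduce the global estimate to the local one (Lemma~\ref{locnon}) by exploiting the zero extension. Since $u \in W_0^{1,p}(\Omega)$, extend $u$ by zero to all of $\mathbb{R}^n$; fix a smooth bounded domain $\widetilde\Omega$ with $\overline\Omega \subset \widetilde\Omega$ (for instance a large ball, or a smoothly bounded neighbourhood of $\overline\Omega$). The first step is to split the double integral over $\mathbb{R}^n \times \mathbb{R}^n$ into the near-diagonal piece over $\widetilde\Omega \times \widetilde\Omega$ and the remaining pieces where at least one variable lies outside $\widetilde\Omega$. Concretely,
\[
\int_{\mathbb{R}^n}\!\int_{\mathbb{R}^n} \frac{|u(x)-u(y)|^p}{|x-y|^{n+ps}}\,dx\,dy
= \int_{\widetilde\Omega}\!\int_{\widetilde\Omega} \frac{|u(x)-u(y)|^p}{|x-y|^{n+ps}}\,dx\,dy
+ 2\int_{\widetilde\Omega}\!\int_{\mathbb{R}^n\setminus\widetilde\Omega} \frac{|u(x)-u(y)|^p}{|x-y|^{n+ps}}\,dy\,dx,
\]
using symmetry and the fact that $u \equiv 0$ on $\mathbb{R}^n\setminus\widetilde\Omega$, so the integral over $(\mathbb{R}^n\setminus\widetilde\Omega)\times(\mathbb{R}^n\setminus\widetilde\Omega)$ vanishes.

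\medskip

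\noi The second step handles the first (local) term: since $u|_{\widetilde\Omega} \in W^{1,p}(\widetilde\Omega)$ with $\nabla u$ supported in $\Omega$, Lemma~\ref{locnon} applied on the smooth domain $\widetilde\Omega$ gives
\[
\int_{\widetilde\Omega}\!\int_{\widetilde\Omega} \frac{|u(x)-u(y)|^p}{|x-y|^{n+ps}}\,dx\,dy
\le \|u\|_{W^{s,p}(\widetilde\Omega)}^p
\le C(n,p,s)\,\|u\|_{W^{1,p}(\widetilde\Omega)}^p
= C(n,p,s)\Big(\|u\|_{L^p(\Omega)}^p + \int_\Omega |\nabla u|^p\,dx\Big),
\]
and then the Poincar\'e inequality on the bounded domain $\Omega$ (valid on $W_0^{1,p}(\Omega)$) absorbs $\|u\|_{L^p(\Omega)}^p$ into $\int_\Omega|\nabla u|^p\,dx$, at the cost of a constant depending on $\Omega$. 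The third step handles the far term: for $x \in \Omega$ (the only place $u$ is nonzero) and $y \in \mathbb{R}^n\setminus\widetilde\Omega$ we have $u(y)=0$, and $|x-y| \ge \Dist(\Omega, \partial\widetilde\Omega) =: d_0 > 0$, so
\[
\int_{\Omega}\!\int_{\mathbb{R}^n\setminus\widetilde\Omega} \frac{|u(x)|^p}{|x-y|^{n+ps}}\,dy\,dx
\le \int_\Omega |u(x)|^p \Big(\int_{|z|\ge d_0} \frac{dz}{|z|^{n+ps}}\Big) dx
= C(n,p,s)\, d_0^{-ps}\,\|u\|_{L^p(\Omega)}^p,
\]
and again Poincar\'e bounds $\|u\|_{L^p(\Omega)}^p$ by $\int_\Omega |\nabla u|^p\,dx$. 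Combining the three steps yields the claim with $C = C(n,p,s,\Omega)$.

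\medskip

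\noi\textbf{Main obstacle.} The steps are individually routine; the only point requiring a little care is that Lemma~\ref{locnon} is stated for \emph{smooth} bounded domains, whereas $\Omega$ itself is merely bounded open. This is exactly why one must pass to an auxiliary smooth domain $\widetilde\Omega \supset \overline\Omega$ rather than working on $\Omega$ directly, and why the zero extension of $u \in W_0^{1,p}(\Omega)$ (which, by definition of the space, lies in $W^{1,p}(\mathbb{R}^n)$ with gradient supported in $\overline\Omega$) is essential: it guarantees $u|_{\widetilde\Omega} \in W^{1,p}(\widetilde\Omega)$ so that Lemma~\ref{locnon} is applicable, and it makes the contribution of $\mathbb{R}^n\setminus\widetilde\Omega$ a genuinely ``tail''-type integral with integrable kernel away from the diagonal. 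All dependence on the geometry of $\Omega$ is funneled through the Poincar\'e constant of $\Omega$ and the separation distance $d_0$, accounting for the $\Omega$-dependence of the final constant.
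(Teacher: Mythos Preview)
Your proof is correct. The paper does not actually give its own proof of this lemma; it simply cites \cite[Lemma~2.1]{Silva} and remarks that the result holds for arbitrary bounded domains because functions in $W_0^{1,p}(\Omega)$ admit a zero extension outside $\Omega$. Your argument is a clean, self-contained realization of exactly that idea: you pass to a smooth auxiliary domain $\widetilde\Omega\supset\overline\Omega$ so that Lemma~\ref{locnon} applies to the near-diagonal piece, you control the off-diagonal piece using the positive separation $d_0=\Dist(\Omega,\mathbb{R}^n\setminus\widetilde\Omega)$ and the integrability of $|z|^{-n-ps}$ away from the origin, and you close with Poincar\'e on $W_0^{1,p}(\Omega)$. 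All steps are standard and correctly executed; there is nothing further to compare.
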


For $0 < s < 1 < p < \infty$, we define the tail space
\[
L^{p-1}_{ps}(\mathbb{R}^n) = \left\{ v \in L^{p-1}_{\mathrm{loc}}(\mathbb{R}^n) : \int_{\mathbb{R}^n} \frac{|v(y)|^{p-1}}{(1 + |y|)^{n + ps}} \, dy < \infty \right\}.
\]
Furthermore, for $x_0 \in \mathbb{R}^n$ and $\rho > 0$, the mixed local and nonlocal tail quantity, which appears in our main results, is defined by
\begin{equation}\label{tail}
\mathrm{Tail}(v; x_0, \rho) = \left( \rho^p \int_{\mathbb{R}^n \setminus B_\rho(x_0)} \frac{|v(y)|^{p-1}}{|y - x_0|^{n + ps}} \, dy \right)^{\frac{1}{p-1}}.
\end{equation}
By construction, for any $v \in L^{p-1}_{ps}(\mathbb{R}^n)$, the tail quantity $\mathrm{Tail}(v; x_0, \rho)$ is finite for every $x_0 \in \mathbb{R}^n$ and $\rho > 0$.

Next, we define the notion of weak subsolutions or supersolutions for equation \eqref{meqn}.

\begin{Definition}\label{subsupsolution}
A function $u \in W_{\mathrm{loc}}^{1,p}(\Omega) \cap L^{p-1}_{ps}(\mathbb{R}^n)$ is a weak subsolution (or supersolution) of \eqref{meqn} if for every $\Omega' \Subset \Omega$ and every nonnegative test function $\phi \in W_0^{1,p}(\Om')$, we have
\begin{equation}\label{weaksubsupsoln}
\int_{\Omega'} \mathcal{A}(x, \nabla u) \cdot \nabla \phi \, dx + \int_{\mathbb{R}^n} \int_{\mathbb{R}^n} J_p(u(x) - u(y)) (\phi(x) - \phi(y)) \, d\mu \leq (\text{ or }\geq) \int_{\Omega'} f \phi \, dx,
\end{equation}
where $\mathcal{A}$ is as defined in \eqref{A} with
\[
J_p(u(x) - u(y)) = |u(x) - u(y)|^{p-2} (u(x) - u(y))
\quad \text{and} \quad
d\mu = K(x,y) \, dx \, dy,
\]
where $K$ is as defined in \eqref{K}. Moreover, we say that $u \in W_{\mathrm{loc}}^{1,p}(\Omega) \cap L^{p-1}_{ps}(\mathbb{R}^n)$ is a weak solution of \eqref{meqn} if the equality in \eqref{weaksubsupsoln} holds for every $\phi\in W_0^{1,p}(\Om')$ without any sign restriction.  
\end{Definition}

It immediately follows from Definition \ref{subsupsolution} that $u$ is a weak subsolution or supersolution of \eqref{meqn} if and only if, for any constant $c \in \mathbb{R}$, the translated function $u + c$ is also a weak subsolution or supersolution of \eqref{meqn}. 
\subsection{Auxiliary results}

The following iteration lemma from \cite[Lemma 1.1]{GGactamath} will be useful for us.  

\begin{Lemma}\label{ite}
Let $0 \leq T_0 \leq t \leq T_1$ and assume that $f : [T_1, T_2] \to [0, \infty)$ is a nonnegative bounded function. Suppose that for $T_0 \leq t < s \leq T_1$, we have  
$$
f(t) \leq A (t - s)^{-\alpha} + B + \theta f(s),
$$
where $A, B, \alpha, \theta$ are nonnegative constants and $\theta < 1$. Then, there exists a constant $c = c(\alpha, \theta)$ such that for every $\rho, R$ with $T_0 \leq \rho < R \leq T_1$, we have  
$$
f(\rho) \leq c \big( A (R - \rho)^{-\alpha} + B \big).
$$
\end{Lemma}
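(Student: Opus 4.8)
The plan is to run the classical dyadic iteration of Giaquinta--Giusti on a geometrically converging sequence of intermediate points. Fix $\rho,R$ with $T_0 \le \rho < R \le T_1$, and assume $\alpha>0$ (if $\alpha=0$ the argument below goes through verbatim with $\tau=\tfrac12$). Choose a parameter $\tau\in(0,1)$, to be fixed at the end, and set
\[
t_0=\rho,\qquad t_{i+1}=t_i+(1-\tau)\tau^{\,i}(R-\rho)\quad(i\ge 0),
\]
so that $t_i=\rho+(1-\tau^{\,i})(R-\rho)$ increases to $R$, every $t_i$ lies in $[\rho,R]\subset[T_0,T_1]$, and $t_{i+1}-t_i=(1-\tau)\tau^{\,i}(R-\rho)$. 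Applying the hypothesis to the admissible pair $t_i<t_{i+1}$ gives
\[
f(t_i)\le A\,(1-\tau)^{-\alpha}\tau^{-i\alpha}(R-\rho)^{-\alpha}+B+\theta f(t_{i+1}).
\]

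Iterating this inequality $k$ times and collecting terms yields
\[
f(\rho)=f(t_0)\le \theta^{\,k}f(t_k)+A\,(1-\tau)^{-\alpha}(R-\rho)^{-\alpha}\sum_{i=0}^{k-1}(\theta\tau^{-\alpha})^{\,i}+B\sum_{i=0}^{k-1}\theta^{\,i}.
\]
Now comes the only real choice in the proof: pick $\tau$ so that $\theta\tau^{-\alpha}<1$, which is possible precisely because $\theta<1$ — for instance $\tau=\bigl(\tfrac{1+\theta}{2}\bigr)^{1/\alpha}$, for which $\theta\tau^{-\alpha}=\tfrac{2\theta}{1+\theta}<1$ and $\tau<1$. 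With this $\tau$, both geometric series converge, with sums bounded by constants depending only on $\alpha$ and $\theta$, and $(1-\tau)^{-\alpha}$ likewise depends only on $\alpha,\theta$. Since $f$ is bounded and $\theta<1$, the residual term $\theta^{\,k}f(t_k)$ tends to $0$ as $k\to\infty$; letting $k\to\infty$ gives
\[
f(\rho)\le c(\alpha,\theta)\bigl(A(R-\rho)^{-\alpha}+B\bigr),
\]
which is the assertion.

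The step to watch is the compatibility of the two constraints on $\tau$: it must be strictly less than $1$, so that the points $t_i$ genuinely converge to $R$ and stay inside $[\rho,R]$, and at the same time large enough that $\tau^{\alpha}>\theta$, so that the series $\sum_i(\theta\tau^{-\alpha})^i$ converges; the admissible interval $(\theta^{1/\alpha},1)$ is nonempty exactly under the hypothesis $\theta<1$. The boundedness of $f$ is used only qualitatively, to discard $\theta^{\,k}f(t_k)$ in the limit, so no quantitative control on $\sup f$ enters the final constant $c$.
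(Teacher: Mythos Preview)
Your proof is correct and is exactly the classical Giaquinta--Giusti dyadic iteration; the paper does not supply its own argument for this lemma but simply cites \cite[Lemma~1.1]{GGactamath}, whose proof is the one you have reproduced. (Minor cosmetic point: the statement as written has $(t-s)^{-\alpha}$ with $t<s$, which you have silently --- and correctly --- read as $(s-t)^{-\alpha}$.)
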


The following version of the Gagliardo-Nirenberg-Sobolev inequality will be useful; see \cite[Corollary 1.57]{Maly}.

\begin{Lemma}\label{emb}
Let $\Omega \subset \mathbb{R}^n$ be an open set with finite measure $|\Omega| < \infty$, and set $\kappa = \frac{n}{n - p}$ for $1 < p < n$. There exists a positive constant $C = C(n,p)$ such that
\begin{equation}\label{e.friedrich}
\left( \int_\Omega |u|^{p \kappa} \, dx \right)^{\frac{1}{p \kappa}} \leq C \left( \int_\Omega |\nabla u|^p \, dx \right)^{\frac{1}{p}}
\end{equation}
holds for every $u \in W_0^{1,p}(\Omega)$.
\end{Lemma}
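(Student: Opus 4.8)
The plan is to recognize \eqref{e.friedrich} as the classical Gagliardo--Nirenberg--Sobolev inequality, since $p\kappa=\tfrac{np}{n-p}$ is precisely the Sobolev conjugate of $p$, and to prove it by the standard route: first on smooth compactly supported functions via the endpoint case $p=1$, then by density. By construction $W_0^{1,p}(\Omega)$ is the closure of $C_c^\infty(\Omega)$ with respect to $u\mapsto\|\nabla u\|_{L^p(\mathbb{R}^n)}$, so it suffices to establish \eqref{e.friedrich} for $u\in C_c^\infty(\Omega)\subset C_c^\infty(\mathbb{R}^n)$; once that is done, applying the inequality to the differences $u_k-u_j$ of an approximating sequence $(u_k)\subset C_c^\infty(\Omega)$ shows that $(u_k)$ is Cauchy, hence convergent, in $L^{p\kappa}$, and a subsequence converging a.e. identifies the limit with $u$, after which \eqref{e.friedrich} passes to the limit. (The hypothesis $|\Omega|<\infty$ is not needed for the estimate itself; it only guarantees a priori that $u\in L^{p\kappa}(\Omega)$, which is automatic for the functions we work with.)

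\emph{Step 1: the case $p=1$.} For $v\in C_c^1(\mathbb{R}^n)$, the fundamental theorem of calculus in each coordinate direction gives $|v(x)|\le\int_{\mathbb{R}}|\partial_i v|\,dt_i$ for every $i$, hence
\[
|v(x)|^{\frac{n}{n-1}}\le\prod_{i=1}^n\Big(\int_{\mathbb{R}}|\partial_i v|\,dt_i\Big)^{\frac{1}{n-1}}.
\]
Integrating this successively in $x_1,x_2,\dots,x_n$ and applying at each stage the generalized Hölder inequality with $n-1$ exponents all equal to $n-1$ — pulling out first the factor that is being integrated and applying Hölder to the remaining product — one obtains
\[
\int_{\mathbb{R}^n}|v|^{\frac{n}{n-1}}\,dx\le\prod_{i=1}^n\Big(\int_{\mathbb{R}^n}|\partial_i v|\,dx\Big)^{\frac{1}{n-1}}\le C(n)\,\|\nabla v\|_{L^1(\mathbb{R}^n)}^{\frac{n}{n-1}},
\]
the last step by the arithmetic--geometric mean inequality; raising to the power $\tfrac{n-1}{n}$ gives \eqref{e.friedrich} for $p=1$.

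\emph{Step 2: general $1<p<n$.} Apply Step 1 to $v=|u|^{\gamma}$ with $\gamma=\frac{p(n-1)}{n-p}>1$; since $\gamma>1$, the map $t\mapsto|t|^{\gamma}$ is $C^1$, so $v\in C_c^1(\mathbb{R}^n)$ with $|\nabla v|=\gamma|u|^{\gamma-1}|\nabla u|$. This yields
\[
\Big(\int_{\mathbb{R}^n}|u|^{\frac{\gamma n}{n-1}}\Big)^{\frac{n-1}{n}}\le C(n)\,\gamma\int_{\mathbb{R}^n}|u|^{\gamma-1}|\nabla u|,
\]
and Hölder's inequality with exponents $p'$ and $p$ bounds the right-hand integral by $\big(\int_{\mathbb{R}^n}|u|^{(\gamma-1)p'}\big)^{1/p'}\big(\int_{\mathbb{R}^n}|\nabla u|^p\big)^{1/p}$. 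The value of $\gamma$ was chosen so that $\tfrac{\gamma n}{n-1}=(\gamma-1)p'=p\kappa$; since moreover $\tfrac{n-1}{n}-\tfrac{1}{p'}=\tfrac{1}{p\kappa}$, dividing through by $\big(\int_{\mathbb{R}^n}|u|^{p\kappa}\big)^{1/p'}$ (finite, as $u\in C_c^1$) gives $\|u\|_{L^{p\kappa}(\mathbb{R}^n)}\le C(n,p)\,\|\nabla u\|_{L^p(\mathbb{R}^n)}$, which is \eqref{e.friedrich}.

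I expect the only genuinely delicate point to be the dimensional induction in Step 1: the iterated Hölder estimate must be set up so that, at the $k$-th integration, the factor $\int|\partial_k v|\,dt_k$ (which no longer depends on $x_k$) is taken out first and Hölder with exponent $n-1$ is applied to the remaining $n-1$ factors. The chain rule for $|u|^{\gamma}$ and the density passage in the reduction are routine, and everything else is bookkeeping of the exponents $\gamma$, $p'$, and $p\kappa$.
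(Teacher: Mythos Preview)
Your proof is correct: this is precisely the classical Gagliardo--Nirenberg argument (endpoint $p=1$ via iterated H\"older, then bootstrap to general $p$ by applying the endpoint to $|u|^{\gamma}$ with $\gamma=\tfrac{p(n-1)}{n-p}$), and the exponent bookkeeping and density passage are handled properly. The paper itself does not give a proof of this lemma at all; it simply records the statement and cites \cite[Corollary 1.57]{Maly}, so there is no in-paper argument to compare against---your self-contained proof supplies what the paper delegates to the reference.
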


A weighted Poincaré inequality, which follows from a careful inspection of the proof in \cite[Corollary 3]{Kmnine} and a change of variables, is stated below.

\begin{Lemma}\label{wgtPoin}
Let $r > 0$ and let $\phi : B_r(x_0) \to [0,\infty)$ be a radially decreasing function with $\phi \equiv 1$ in $B_{\frac{r}{2}}(x_0)$. Then there exists a positive constant $C = C(n,p)$ such that
\begin{equation}\label{wgtine}
\int_{B_r(x_0)} |u - (u)_{\phi, B_r(x_0)}|^p \, \phi \, dx \leq C r^p \int_{B_r(x_0)} |\nabla u|^p \, \phi \, dx
\end{equation}
holds for every $u \in W^{1,p}(B_r(x_0))$.
\end{Lemma}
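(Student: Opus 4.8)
The plan is to reduce \eqref{wgtine} to a normalized reference ball and then exploit the layer-cake (distribution-function) structure of a radially decreasing weight, so that the weighted estimate follows from the \emph{classical}, unweighted Poincar\'e inequality applied on a one-parameter family of concentric balls. First I would normalize: by translation we may assume $x_0 = 0$, and then set $v(y) = u(ry)$ and $\psi(y) = \phi(ry)$ for $y \in B_1$. One checks that $\nabla v(y) = r\,\nabla u(ry)$, that $(v)_{\psi, B_1} = (u)_{\phi, B_r(0)}$, and that after the substitution $x = ry$ both sides of \eqref{wgtine} rescale by a common factor (with the gradient side additionally producing the stated $r^p$). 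Hence it suffices to show
\[
\int_{B_1} |v - (v)_{\psi, B_1}|^p\, \psi\, dy \;\le\; C(n,p) \int_{B_1} |\nabla v|^p\, \psi\, dy
\]
for an arbitrary radially decreasing $\psi \colon B_1 \to [0,\infty)$ with $\psi \equiv 1$ on $B_{1/2}$; note that any such $\psi$ takes values in $[0,1]$.

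Next I would reduce to a single, conveniently chosen constant. For any $c \in \R$, writing $c - (v)_{\psi,B_1} = (c-v)_{\psi,B_1}$ and applying Jensen's inequality with the probability measure obtained by normalizing $\psi\,dy$ over $B_1$ gives $|c - (v)_{\psi,B_1}|^p \int_{B_1}\psi\,dy \le \int_{B_1} |v-c|^p\psi\,dy$, and therefore, by convexity of $t \mapsto |t|^p$,
\[
\int_{B_1}|v - (v)_{\psi,B_1}|^p\,\psi\,dy \;\le\; 2^p \int_{B_1} |v - c|^p\, \psi\, dy .
\]
I would take $c = m := \fint_{B_{1/2}} v\, dy$, the unweighted mean of $v$ over the small ball.

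The core step is the layer-cake computation. Since $0 \le \psi \le 1$, we have $\psi(y) = \int_0^1 \mathbf 1_{\{\psi>t\}}(y)\,dt$, so by Fubini
\[
\int_{B_1} |v - m|^p\, \psi\, dy \;=\; \int_0^1 \Big( \int_{\{\psi>t\}} |v - m|^p\, dy \Big)\, dt .
\]
For each $t \in (0,1)$, because $\psi$ is radially decreasing and equals $1 > t$ on $B_{1/2}$, the superlevel set $\{\psi > t\}$ coincides up to a null set with a ball $B_{\rho(t)}(0)$ with $\tfrac12 \le \rho(t) \le 1$. On every such ball one has the shifted-center Poincar\'e estimate
\[
\int_{B_\rho} |v - m|^p\, dy \;\le\; C(n,p)\, \rho^p \int_{B_\rho} |\nabla v|^p\, dy \;\le\; C(n,p) \int_{B_\rho} |\nabla v|^p\, dy, \qquad \tfrac12 \le \rho \le 1,
\]
which follows from the classical Poincar\'e inequality on $B_\rho$ together with $\big|\fint_{B_\rho} v\,dy - m\big|^p \le \frac{C}{|B_{1/2}|}\int_{B_\rho}\big|v - \fint_{B_\rho} v\,dy\big|^p\,dy$ and the volume comparison $|B_\rho| \le 2^n|B_{1/2}|$; crucially the constant is independent of $\rho$, hence of $t$. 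Substituting this and reversing the layer-cake identity for $|\nabla v|^p$ yields
\[
\int_{B_1} |v - m|^p\, \psi\, dy \;\le\; C(n,p) \int_0^1 \Big( \int_{\{\psi>t\}} |\nabla v|^p\, dy \Big)\, dt \;=\; C(n,p) \int_{B_1} |\nabla v|^p\, \psi\, dy .
\]
Chaining this with the two preceding reductions and undoing the rescaling gives \eqref{wgtine}; this is precisely the change of variables and the inspection of \cite[Corollary 3]{Kmnine} alluded to in the statement.

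I expect the only delicate point to be the treatment of the superlevel sets: since $\phi$ is merely assumed radially decreasing, and not continuous, one has to verify that $\{\phi > t\}$ agrees with an open ball modulo a Lebesgue-null set and that $t \mapsto \rho(t)$ is measurable, and one has to ensure that the Poincar\'e constant obtained does not blow up as $\rho(t) \downarrow \tfrac12$ or $t \uparrow 1$ --- both are controlled by the uniform bounds $\tfrac12 \le \rho(t) \le 1$. The remaining ingredients (the classical Poincar\'e inequality on a ball, Jensen's inequality, Fubini, and the comparison of averages over comparable concentric balls) are entirely routine.
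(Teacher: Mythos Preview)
Your proposal is correct and is precisely the argument the paper defers to: the paper gives no proof of its own, merely stating that the result ``follows from a careful inspection of the proof in \cite[Corollary~3]{Kmnine} and a change of variables.'' Your scaling reduction is that change of variables, and your layer-cake decomposition of the radially decreasing weight into superlevel balls, followed by the classical Poincar\'e inequality on each, is exactly the mechanism behind the cited result of Dyda--Kassmann; you have simply written out in full what the paper leaves implicit.
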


The following theorem is a classical John-Nirenberg Lemma (see \cite{JN}).

\begin{Theorem}\label{JN-lem}
Let $w \in L_{\mathrm{loc}}^1(\Omega)$ and suppose there exists a constant $K$ such that
\[
\fint_{B_r(x_0)} |w(x) - (w)_{1, B_r(x_0)}| \, dx \leq K
\]
whenever $B_{2r}(x_0) \subset \Omega$. Then there exists a constant $\nu = \nu(n) > 0$ such that
\[
\fint_{B_r(x_0)} e^{\nu |w(x) - (w)_{1, B_r(x_0)}|} \, dx \leq 2,
\]
whenever $B_{2r}(x_0) \subset \Omega$.
\end{Theorem}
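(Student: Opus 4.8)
The plan is to reduce the claim to an exponential-decay bound for the distribution function of $w$ on balls, to obtain that bound by the classical Calder\'on--Zygmund stopping-time argument, and finally to recover exponential integrability by a layer-cake integration. Since the statement is purely local and the constants will not depend on $\Omega$, all the work happens on balls (and auxiliary cubes) lying well inside $\Omega$.

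\textbf{Reduction.} First I would replace $w$ by $w/K$; since $(w/K)_{1,B_r(x_0)}=(w)_{1,B_r(x_0)}/K$, the hypothesis is preserved with constant $1$, and the $\nu$ produced below will then be dimensional. Recalling that $(w)_{1,B}=\fint_B w\,dx$, and writing $\mu_B(\lambda):=|B|^{-1}|\{x\in B:\ |w(x)-(w)_{1,B}|>\lambda\}|$, the layer-cake formula gives $\fint_B e^{\nu|w-(w)_{1,B}|}\,dx=1+\nu\int_0^\infty e^{\nu\lambda}\mu_B(\lambda)\,d\lambda$. Hence it suffices to produce dimensional constants $c_1,c_2>0$ with $\mu_B(\lambda)\le c_1 e^{-c_2\lambda}$ for all $\lambda>0$ and all balls $B=B_r(x_0)$ with $B_{2r}(x_0)\subset\Omega$: one then picks $\nu=\nu(n)\in(0,c_2)$ small enough that $\nu c_1\int_0^\infty e^{(\nu-c_2)\lambda}\,d\lambda\le1$, which gives $\fint_B e^{\nu|w-(w)_{1,B}|}\,dx\le2$.

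\textbf{Distributional estimate.} Since balls and cubes are comparable up to dimensional factors, the hypothesis translates into $\fint_Q|w-w_Q|\,dx\le c(n)$ for every dyadic cube $Q$ sitting comfortably inside $\Omega$, where $w_Q:=\fint_Q w\,dx$ and "comfortably" means that the doubled circumscribed balls of $Q$ and of all its dyadic subcubes remain in $\Omega$, so the hypothesis legitimately applies to them; conversely, the distributional bound on such cubes yields the one on balls by circumscribing a cube and absorbing the resulting shift of averages into $c_1,c_2$. Fix a large dimensional threshold $\gamma=\gamma(n)$ and set $F(\lambda):=\sup_Q|Q|^{-1}|\{x\in Q:\ |w-w_Q|>\lambda\}|$ over admissible dyadic cubes; by Chebyshev, $F(\lambda)\le\min\{1,\,c(n)/\lambda\}$. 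Given a cube $Q$ and $\lambda>\gamma$, the dyadic Calder\'on--Zygmund decomposition of $|w-w_Q|$ on $Q$ at height $\gamma$ produces disjoint maximal dyadic subcubes $Q_j$ with $\gamma<\fint_{Q_j}|w-w_Q|\le 2^n\gamma$, with $\sum_j|Q_j|\le\gamma^{-1}\int_Q|w-w_Q|\le\gamma^{-1}c(n)|Q|$, and with $|w-w_Q|\le\gamma$ a.e.\ off $\bigcup_j Q_j$. Since $|w_{Q_j}-w_Q|\le 2^n\gamma$, for $\lambda>2^n\gamma$ one has $\{x\in Q:\ |w-w_Q|>\lambda\}\subset\bigcup_j\{x\in Q_j:\ |w-w_{Q_j}|>\lambda-2^n\gamma\}$, and summing yields $F(\lambda)\le(c(n)/\gamma)\,F(\lambda-2^n\gamma)$. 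Enlarging $\gamma$ so that $\theta:=c(n)/\gamma<1$ and iterating from the trivial bound $F\le1$ on $[0,2^n\gamma]$ gives $F(\lambda)\le c_1 e^{-c_2\lambda}$ with $c_1,c_2$ depending only on $n$, which is exactly the estimate required in the Reduction step.

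\textbf{Main obstacle.} The delicate point is the iteration: the stopping-time decomposition must be run at a \emph{fixed} height $\gamma$, so that each step costs only the fixed additive amount $2^n\gamma$ in the level while earning the fixed multiplicative gain $\theta<1$ in measure; after $k$ steps this produces decay $\theta^k$ against a level of order $k\gamma$, which is precisely the mechanism upgrading the polynomial Chebyshev bound to genuine exponential decay. The remaining care is purely bookkeeping --- making sure that every ball implicitly invoked when applying the hypothesis (the doubled circumscribed balls of the dyadic subcubes) lies inside $\Omega$, which is why the reduction is phrased for cubes sitting well inside $\Omega$; this does not affect the dimensional nature of $\nu$.
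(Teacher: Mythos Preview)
The paper does not supply its own proof of this statement: Theorem~\ref{JN-lem} is simply quoted as the classical John--Nirenberg lemma with a reference to \cite{JN}, so there is no in-paper argument to compare against.

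Your proposal is the standard John--Nirenberg proof and is correct in outline: the layer-cake reduction to an exponential distributional bound, the Calder\'on--Zygmund stopping-time decomposition at a fixed height $\gamma$, the recursion $F(\lambda)\le \theta\,F(\lambda-2^n\gamma)$ with $\theta=c(n)/\gamma<1$, and the iteration are all exactly the classical mechanism. The bookkeeping you flag about ``admissible'' cubes (ensuring the doubled circumscribed balls stay in $\Omega$ so the hypothesis applies to every subcube appearing in the decomposition) is the right thing to worry about and is handled in the usual way.

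One small remark on your scaling step: after replacing $w$ by $w/K$ you obtain a dimensional exponent $\nu_0=\nu_0(n)$ for $w/K$, which, translated back, gives $\fint_{B} e^{(\nu_0/K)|w-(w)_{1,B}|}\,dx\le 2$. In other words, the exponent for $w$ itself is $\nu_0/K$, not a constant depending only on $n$; the statement as written in the paper is slightly loose on this point (and indeed the paper's application in Step~1 of the proof of Theorem~\ref{mthm} uses $\nu/K$ in the exponent). Your argument proves the correct version.
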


We also employ a variant of an abstract lemma originally due to Bombieri and Giusti \cite{BoGi}, adapted in \cite[Lemma 2.11]{KK}.

\begin{Lemma}\label{Bom-Giu}
Let $\nu$ be a Borel measure and let $\theta$, $A$, and $\gamma$ be positive constants, with $0 < \delta < 1$ and $0 < q \leq \infty$. Suppose $U_\sigma$ are bounded measurable sets with $U_{\sigma'} \subseteq U_\sigma$ for $0 < \delta \leq \sigma' < \sigma \leq 1$. If $q < \infty$, assume the doubling condition
\[
\nu(U_1) \leq A \nu(U_\delta).
\]
Let $f$ be a positive measurable function on $U_1$ satisfying the reverse H\"older inequality
\[
\left( \fint_{U_{\sigma'}} f^q \, d\nu \right)^{\frac{1}{q}} \leq \left( \frac{A}{(\sigma - \sigma')^\theta} \fint_{U_\sigma} f^s \, d\nu \right)^{\frac{1}{s}}
\]
for $0 < s < q$, and
\[
\nu\left( \{ x \in U_1 : \log f > \lambda \} \right) \leq \frac{A \nu(U_\delta)}{\lambda^\gamma}
\]
for all $\lambda > 0$. Then
\[
\left( \int_{U_\delta} f^q \, d\nu \right)^{\frac{1}{q}} \leq C,
\]
where $C$ depends only on $\theta$, $\delta$, $\gamma$, $q$, and $A$.
\end{Lemma}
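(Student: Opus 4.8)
The plan is to run the classical Bombieri--Giusti iteration: one shows that the self-improving reverse H\"older inequality, once fed with the weak-$L^\gamma$ information on $\log f$, forces an \emph{absolute} bound on the $L^q$-average of $f$. Throughout, the doubling hypothesis $\nu(U_1)\le A\nu(U_\delta)$, together with the trivial monotonicity $\nu(U_\delta)\le\nu(U_\sigma)\le\nu(U_1)$ that comes from $U_{\sigma'}\subseteq U_\sigma$, lets us treat all the measures $\nu(U_\sigma)$, $\delta\le\sigma\le1$, as mutually comparable with constant depending only on $A$. Before starting, I would reduce to a setting in which every quantity below is finite: replace $f$ by the truncation $f_L=\min\{f,L\}$, which still satisfies the weak-type bound and for which the reverse H\"older estimate is available (it is inherited from, or is proved in the same way as, the one for $f$); it then suffices to prove the asserted bound for $f_L$ with a constant independent of $L$ and to let $L\to\infty$ by monotone convergence. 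The case $q=\infty$ is obtained either by letting $q\to\infty$ in the finite-$q$ bound or by running the same scheme with essential suprema (and the doubling hypothesis is not needed there). So from now on $\Phi(\sigma):=\fint_{U_\sigma}f^q\,d\nu<\infty$ for all $\sigma\in[\delta,1]$, and by doubling $\Phi(\sigma)\le A\,\Phi(1)$ on this range.

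The core step is a self-improvement inequality for $\Phi$. Fix a small exponent $s\in(0,q)$ and, for $\delta\le\sigma'<\sigma\le1$ and a threshold $k>1$, split $\fint_{U_\sigma}f^s\,d\nu$ over $\{f\le k\}$ and $\{f>k\}$. On $\{f\le k\}$ the layer-cake identity together with the weak bound $\nu(\{x\in U_1:\log f>\lambda\})\le A\nu(U_\delta)\lambda^{-\gamma}$ controls the contribution by a constant depending only on $s,k,\gamma,A$; on $\{f>k\}$ one writes $f^s\le k^{s-q}f^q$ and combines with Chebyshev's inequality $\nu(\{f>\lambda\})\le\lambda^{-q}\int_{U_\sigma}f^q\,d\nu$, so that this part is a small multiple of $\Phi(\sigma)$ once $k$ is taken large compared with $\Phi(\sigma)^{1/q}$. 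Inserting the resulting bound for $\fint_{U_\sigma}f^s\,d\nu$ into the reverse H\"older inequality, and choosing the truncation level $k$ and its dependence on the gap so as to balance the two pieces, leads to a recursion of the form
\[
\Phi(\sigma')\ \le\ \tfrac12\,\Phi(\sigma)\ +\ \Psi(\sigma-\sigma'),\qquad \delta\le\sigma'<\sigma\le1,
\]
where $\Psi$ is an explicit nonnegative function of the gap depending only on $\theta,\gamma,q,s,A$ (possibly after first replacing $\Phi$ by a suitably weighted supremum $\sup_{\sigma\le\varrho\le1}w(\varrho)\Phi(\varrho)$, so that the error term is genuinely absorbed rather than reproducing a multiple of $\Phi(\sigma)$). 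Iterating this recursion along a sequence $\sigma_n\downarrow\delta$ — this is exactly what the iteration Lemma~\ref{ite} does when $\Psi(\sigma-\sigma')\lesssim(\sigma-\sigma')^{-\alpha}$ — then yields $\Phi(\delta)\le C=C(\theta,\delta,\gamma,q,A)$, which is the asserted estimate; the $q=\infty$ case follows as indicated.

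The main obstacle is precisely the passage to the displayed recursion and the control of the iteration. The reverse H\"older inequality on its own does not contract $\Phi$, while the weak-$L^\gamma$ bound on its own yields no integrability whatsoever of any positive power of $f$; only their interplay, through the level truncation at height $k$, produces a genuine gain, and the delicate points are (i) choosing $s$, the truncation level $k$, and the dependence of everything on $\sigma-\sigma'$ so that the ``bad set'' $\{f>k\}$ contributes at most $\tfrac12\Phi(\sigma)$ while the error $\Psi(\sigma-\sigma')$ stays finite and independent of $f$, and (ii) arranging the decreasing radii (and, in the weighted formulation, the weight $w$) so that the accumulated errors do not blow up along the iteration. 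Everything else — the layer-cake and Chebyshev estimates, the bookkeeping of comparable measures, the truncation limit, and the final appeal to Lemma~\ref{ite} — is routine.
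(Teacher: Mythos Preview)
The paper does not prove this lemma at all; it is simply quoted from \cite{BoGi} and \cite[Lemma~2.11]{KK}. So there is no in-paper argument to compare against, and one can only weigh your sketch against the standard proof.

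Your outline assembles the correct ingredients (reverse H\"older, layer-cake with the weak $\log f$ bound, a threshold splitting, an iteration), but the displayed recursion
\[
\Phi(\sigma')\le\tfrac12\Phi(\sigma)+\Psi(\sigma-\sigma'),\qquad \Phi(\sigma)=\fint_{U_\sigma}f^q\,d\nu,
\]
with $\Psi$ independent of $f$ and of at most power growth in $(\sigma-\sigma')^{-1}$ (so that Lemma~\ref{ite} applies), cannot be obtained from the scheme you describe. After the splitting and insertion into the reverse H\"older inequality one has
\[
\Phi(\sigma')^{s/q}\le \frac{A}{(\sigma-\sigma')^\theta}\Big[C(s,k,\gamma,A)+k^{s-q}\Phi(\sigma)\Big],
\]
and raising to the power $q/s>1$ produces $\Phi(\sigma)^{q/s}$ on the right, not $\Phi(\sigma)$. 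If instead one works with $\varphi^s=\Phi^{s/q}$ for a \emph{fixed} $s$ and uses H\"older on $\{f>k\}$ to get a coefficient $(A/(\log k)^\gamma)^{1-s/q}$ in front of $\varphi(\sigma)^s$, then forcing this coefficient (times the reverse H\"older prefactor) below $\tfrac12$ requires $\log k\sim(\sigma-\sigma')^{-\theta/(\gamma(1-s/q))}$, so the residual term behaves like $\exp\bigl(c(\sigma-\sigma')^{-\beta}\bigr)$ and no geometric iteration can absorb it. The parenthetical about passing to a weighted supremum does not touch either obstruction.

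In the genuine Bombieri--Giusti argument the iteration is run on $\log\varphi(\sigma)$, $\varphi(\sigma)=\Phi(\sigma)^{1/q}$, and the exponent $s$ in the reverse H\"older inequality is chosen \emph{at each step to depend on the current size of $\log\varphi(\sigma)$} (roughly $s\sim\log\log\varphi(\sigma)/\log\varphi(\sigma)$), so that one gains a fixed fraction on $\log\varphi$ while the error stays polynomial in $(\sigma-\sigma')^{-1}$. This coupling of $s$ to $\log\varphi$ is precisely the ``delicate point (i)'' you flag as the main obstacle; it is the heart of the proof, not routine bookkeeping, and your sketch does not carry it out.
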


Finally, we state some algebraic inequalities that are crucial to obtain energy estimates for weak supersolutions of \eqref{meqn}. The inequalities \eqref{alg2} and \eqref{alg1} below are taken from \cite[Lemma 3.4]{Chakeretal} and \cite[Lemma 2.9]{BGK} respectively.

\begin{Lemma}\label{alg}
Let $a,b > 0$ and $p \in (1, \infty)$.

\begin{enumerate} 
    \item[(a)] For any $\tau_1, \tau_2 \in [0,1]$ and any $\epsilon > p-1$, there exist constants $c_i = c_i(p, \epsilon) > 0$, $i=1,2$, bounded when $\epsilon$ is bounded away from $p-1$, such that
    \begin{equation}\label{alg2}
    J_p(b - a) \big(\tau_1^p a^{-\epsilon} - \tau_2^p b^{-\epsilon} \big) \geq c_1 \left| \tau_2 b^{\frac{p-1-\epsilon}{p}} - \tau_1 a^{\frac{p-1-\epsilon}{p}} \right|^p - c_2 |\tau_2 - \tau_1|^p \left( b^{p-1-\epsilon} + a^{p-1-\epsilon} \right).
    \end{equation}

    \item[(b)] For any $\tau_1, \tau_2 \geq 0$ and any $\epsilon \in (0, p-1)$, there exists a constant $C = C(p) > 1$ such that
    \begin{equation}\label{alg1}
    J_p(b - a) \big(\tau_1^p a^{-\epsilon} - \tau_2^p b^{-\epsilon} \big) \geq \frac{\zeta(\epsilon)}{C} \left| \tau_2 b^{\frac{p-1-\epsilon}{p}} - \tau_1 a^{\frac{p-1-\epsilon}{p}} \right|^p - \left( \zeta(\epsilon) + 1 + \epsilon^{1-p} \right) |\tau_2 - \tau_1|^p \left( b^{p-1-\epsilon} + a^{p-1-\epsilon} \right),
    \end{equation}
    where $\zeta(\epsilon) = \epsilon \left(\frac{p}{p-1-\epsilon}\right)^p$. Moreover, if $0 < p-1-\epsilon < 1$, one may choose $\zeta(\epsilon) = \frac{\epsilon p^p}{p-1-\epsilon}$.
\end{enumerate}
\end{Lemma}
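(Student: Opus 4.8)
The plan is to reduce both \eqref{alg2} and \eqref{alg1} to one-variable estimates, to settle the diagonal case $\tau_1=\tau_2$ by an integral (Hölder) argument that produces the sharp constant, and then to recover the general case by a decomposition plus elementary power inequalities. First note that both sides of each inequality are invariant under the simultaneous exchange $(a,\tau_1)\leftrightarrow(b,\tau_2)$: since $J_p$ is odd, $J_p(b-a)$ and $\tau_1^p a^{-\epsilon}-\tau_2^p b^{-\epsilon}$ each change sign, so their product is preserved, and the right-hand sides are manifestly symmetric. Hence we may assume $b\ge a>0$, so $J_p(b-a)=(b-a)^{p-1}\ge 0$. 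Writing $t=b/a\ge 1$ and dividing through by $a^{p-1-\epsilon}$ — the common homogeneity of every term, since $p\beta=p-1-\epsilon$ where $\beta:=\frac{p-1-\epsilon}{p}$ — turns each claim into a one-parameter family in $t\ge1$, with parameters $\tau_1,\tau_2$, of the shape
\[
(t-1)^{p-1}\big(\tau_1^p-\tau_2^p t^{-\epsilon}\big)\;\ge\; c_1\,\big|\tau_2 t^{\beta}-\tau_1\big|^p - c_2\,|\tau_2-\tau_1|^p\,(t^{p\beta}+1).
\]

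In the diagonal case $\tau_1=\tau_2=\tau$ this collapses to $\tau^p$ times the scalar inequality $(b-a)^{p-1}(a^{-\epsilon}-b^{-\epsilon})\ge \zeta(\epsilon)\,|a^{\beta}-b^{\beta}|^p$, with $\zeta(\epsilon)=\epsilon\big(\frac{p}{|p-1-\epsilon|}\big)^p$. I would prove this for $b>a$ by writing the increments as integrals, $a^{-\epsilon}-b^{-\epsilon}=\int_a^b \epsilon s^{-\epsilon-1}\,ds$ and $|a^{\beta}-b^{\beta}|=|\beta|\int_a^b s^{\beta-1}\,ds=|\beta|\int_a^b (s^{-1-\epsilon})^{1/p}\,ds$, and applying Hölder's inequality with exponents $p$ and $p'$ to the latter; this yields $|a^\beta-b^\beta|^p\le |\beta|^p\epsilon^{-1}(b-a)^{p-1}(a^{-\epsilon}-b^{-\epsilon})=\zeta(\epsilon)^{-1}(b-a)^{p-1}(a^{-\epsilon}-b^{-\epsilon})$, which is exactly the claim and is insensitive to the sign of $p-1-\epsilon$. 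This already gives the coefficient $\zeta(\epsilon)/C$ in \eqref{alg1}, and since $\zeta(\epsilon)$ stays bounded whenever $\epsilon$ is bounded away from $p-1$, it gives the required behaviour of $c_1$ in \eqref{alg2}; the sharpened value $\zeta(\epsilon)=\frac{\epsilon p^p}{p-1-\epsilon}$ when $0<p-1-\epsilon<1$ would come from a more careful version of the same scalar estimate, exploiting the concavity of $s\mapsto s^{p\beta}$ in that range.

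To pass to general $\tau_1,\tau_2$ (still with $b\ge a$), I would split $\tau_1^p a^{-\epsilon}-\tau_2^p b^{-\epsilon}$ as $\tau_1^p(a^{-\epsilon}-b^{-\epsilon})+(\tau_1^p-\tau_2^p)b^{-\epsilon}$ when $\tau_1\ge\tau_2$ — so that the second, ``leftover'', summand becomes nonnegative after multiplying by $(b-a)^{p-1}$ and is simply discarded — and as $\tau_2^p(a^{-\epsilon}-b^{-\epsilon})-(\tau_2^p-\tau_1^p)a^{-\epsilon}$ when $\tau_1<\tau_2$. The diagonal summand is bounded below by $\tau_i^p\,\zeta(\epsilon)\,|a^\beta-b^\beta|^p$ from the previous step, and then the triangle inequality $|\tau_2 b^\beta-\tau_1 a^\beta|\le|\tau_2-\tau_1|\,b^\beta+\tau_1\,|b^\beta-a^\beta|$ (respectively with $\tau_2$ and $a^\beta$) together with the elementary bounds $(x+y)^p\le 2^{p-1}(x^p+y^p)$ and $(x-y)_+^p\ge 2^{1-p}x^p-y^p$ converts it into $c_1|\tau_2 b^\beta-\tau_1 a^\beta|^p$ minus an admissible multiple of $|\tau_2-\tau_1|^p b^{p\beta}$. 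The negative leftover term in the case $\tau_1<\tau_2$ is handled by a further dichotomy: if the whole left-hand side is still nonnegative it is absorbed into the diagonal gain by Young's inequality — the chosen Young weight being what introduces the factor $\epsilon^{1-p}$ in \eqref{alg1} — while if the left-hand side is negative one uses instead the direct bound $|\mathrm{LHS}|\le \tau_2^p(b-a)^{p-1}b^{-\epsilon}\le \tau_2^p b^{p\beta}$, noting that in the regime where this occurs $\tau_1$ is comparably smaller than $\tau_2$, so that $\tau_2^p\lesssim|\tau_2-\tau_1|^p$ and this fits into the error term. In part (a) the hypothesis $\tau_1,\tau_2\in[0,1]$ permits the cruder estimate $|\tau_2^p-\tau_1^p|\le p\,|\tau_2-\tau_1|$ where convenient; in part (b), with $\tau_i$ unbounded, every cross term must be routed through the diagonal gain, which is why the error constant there carries the extra $\zeta(\epsilon)+\epsilon^{1-p}$.

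I expect the real difficulty to be precisely the bookkeeping in the last step: arranging the case split (sign of $\tau_1-\tau_2$; sign of the left-hand side; $t$ near $1$ versus $t$ large) so that the cross term $(\tau_1^p-\tau_2^p)(b-a)^{p-1}b^{-\epsilon}$ — which \emph{cannot} be bounded by $|\tau_1-\tau_2|^p$ alone when the $\tau_i$ are close but bounded away from $0$ — is always absorbed either into the strictly positive diagonal contribution or into a legitimate error term, all the while keeping the constants bounded (part (a)) or explicitly quantified by $\zeta(\epsilon)$ and $\epsilon^{1-p}$ (part (b)). By contrast, the symmetrisation, the scaling, and the scalar diagonal inequality are routine once set up.
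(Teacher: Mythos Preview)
The paper does not actually prove this lemma: it simply states the two inequalities and cites \cite[Lemma~3.4]{Chakeretal} for \eqref{alg2} and \cite[Lemma~2.9]{BGK} for \eqref{alg1}, so there is nothing in the paper to compare your argument against at the technical level.

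That said, your sketch is sound and follows the standard route used in those references (and, earlier, in Di~Castro--Kuusi--Palatucci). The symmetrisation and the scaling to a one-parameter inequality in $t=b/a$ are correct as written. Your diagonal estimate via the integral representation and H\"older,
\[
|a^{\beta}-b^{\beta}|^{p}\le |\beta|^{p}\,\epsilon^{-1}\,(b-a)^{p-1}\big(a^{-\epsilon}-b^{-\epsilon}\big),
\]
is exactly what produces the constant $\zeta(\epsilon)=\epsilon\big(\tfrac{p}{|p-1-\epsilon|}\big)^{p}$ and works regardless of the sign of $\beta$, as you observe. Your decomposition of $\tau_1^{p}a^{-\epsilon}-\tau_2^{p}b^{-\epsilon}$ into a diagonal piece plus a $(\tau_1^{p}-\tau_2^{p})$ remainder, together with the triangle inequality on $|\tau_2 b^{\beta}-\tau_1 a^{\beta}|$ and Young's inequality to absorb cross terms, is precisely the mechanism used in \cite{BGK}; the appearance of $\epsilon^{1-p}$ in the error of \eqref{alg1} does indeed come from the Young split you describe. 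The one place where your write-up is genuinely only a sketch is the ``sharpened'' constant $\zeta(\epsilon)=\frac{\epsilon p^{p}}{p-1-\epsilon}$ for $0<p-1-\epsilon<1$: the concavity idea is the right one, but you would need to spell out the one-line calculus fact that replaces the H\"older step there. The honest self-assessment in your final paragraph is accurate: the bookkeeping in the off-diagonal case is where all the work lies, but there is no missing idea.
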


\section{Energy estimates}
\subsection{Energy estimate for supersolutions}
In this section, we establish energy estimates for weak supersolutions of \eqref{meqn}. The first one provides an energy estimate for negative exponents and the second one for positive exponents, respectively.

\begin{Lemma}\label{inveng}
Suppose that $0 < r \leq 1$ with $r < R$. Let $u$ be a weak supersolution of \eqref{meqn} such that 
$u \geq 0$ in $B_R(x_0) \subset \Omega$.
Assume $v = u + d$, where $d > 0$, and let $\psi \in C_c^\infty(B_r(x_0))$ be nonnegative in $B_r(x_0)$.
\begin{enumerate}
\item[a.] Then for any $\epsilon > p - 1$ and every $\delta > 0$, there exists a positive constant $C = C(p,C_1, C_2, \Lambda,  \epsilon)$ such that
\begin{equation}\label{negeng1}
\begin{split}
&\int_{B_r(x_0)} \psi^p \Big| \nabla \Big( v^{-\frac{\alpha}{p}} \Big) \Big|^p \, dx \\
&\leq C \frac{\alpha^p}{\epsilon} \Bigg\{ \epsilon^{1-p} \int_{B_r(x_0)} v^{-\alpha} |\nabla \psi|^p \, dx + \int_{B_r(x_0)} \int_{B_r(x_0)} \big( v^{-\alpha}(x) + v^{-\alpha}(y) \big) |\psi(x) - \psi(y)|^p \, d\mu \\
&\quad + \Bigg( \sup_{x \in \mathrm{supp} \, \psi} \int_{\mathbb{R}^n \setminus B_r(x_0)} \frac{dy}{|x - y|^{n + ps}} +d^{1-p} \sup_{x \in \mathrm{supp} \, \psi} \int_{\mathbb{R}^n \setminus B_R(x_0)} \frac{u_-(y)^{p-1}}{|x-y|^{n + ps}} \, dy \Bigg) \int_{B_r(x_0)} v^{-\alpha} \psi^p \, dx \\
&\quad + d^{1-p} \|f\|_{L^{\frac{q}{p}}(B_r(x_0))} \Bigg( \delta \big\| \psi v^{-\frac{\alpha}{p}} \big\|_{L^{p \kappa}(B_r(x_0))}^p + \delta^{-\frac{n}{q-n}} \big\| \psi v^{-\frac{\alpha}{p}} \big\|_{L^p(B_r(x_0))}^p \Bigg) \Bigg\},
\end{split}
\end{equation}
where $\alpha = \epsilon - p + 1$. Moreover, the constant $C$ in \eqref{negeng1} is bounded as long as $\epsilon$ is bounded away from $p-1$.

\item[b.] Then for any $\epsilon \in (0, p - 1)$ and every $\delta > 0$, there exists a positive constant $C = C(C_1, C_2, \Lambda, p)$ such that
\begin{equation}\label{negeng2}
\begin{split}
&\int_{B_r(x_0)} \psi^p \Big| \nabla \Big( v^{\frac{\beta}{p}} \Big) \Big|^p \, dx \\
&\leq C \frac{\beta^p}{\epsilon} \Bigg\{ \epsilon^{1-p} \int_{B_r(x_0)} v^{\beta} |\nabla \psi|^p \, dx \\
&\quad + \Big( \zeta(\epsilon) + 1 + \epsilon^{1-p} \Big) \int_{B_r(x_0)} \int_{B_r(x_0)} \big( v^{\beta}(x) + v^{\beta}(y) \big) |\psi(x) - \psi(y)|^p \, d\mu \\
&\quad + \Bigg( \sup_{x \in \mathrm{supp} \, \psi} \int_{\mathbb{R}^n \setminus B_r(x_0)} \frac{dy}{|x - y|^{n + ps}} + d^{1-p} \sup_{x \in \mathrm{supp} \, \psi} \int_{\mathbb{R}^n \setminus B_R(x_0)} \frac{u_-(y)^{p-1}}{|x-y|^{n + ps}} \, dy \Bigg) \int_{B_r(x_0)} v^{\beta} \psi^p \, dx \\
&\quad + d^{1-p} \|f\|_{L^{\frac{q}{p}}(B_r(x_0))} \Bigg( \delta \big\| \psi v^{\frac{\beta}{p}} \big\|_{L^{p \kappa}(B_r(x_0))}^p + \delta^{-\frac{n}{q-n}} \big\| \psi v^{\frac{\beta}{p}} \big\|_{L^p(B_r(x_0))}^p \Bigg) \Bigg\},
\end{split}
\end{equation}
where $\beta = p - 1 - \epsilon$. Here, $\zeta(\epsilon) = \frac{\epsilon p^p}{\beta^p}$. Moreover, if $0 < \beta < 1$, then we may choose $\zeta(\epsilon) = \frac{\epsilon p^p}{\beta}$.
\end{enumerate}
\end{Lemma}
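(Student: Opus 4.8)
The plan is to test the weak formulation \eqref{weaksubsupsoln} with the test function $\phi = \psi^p v^{-\epsilon}$ (for part (a)) or $\phi = \psi^p v^{-\epsilon}$ with $\epsilon \in (0,p-1)$ (for part (b)), where $v = u+d$. Since $u$ is a supersolution and $\phi \geq 0$, this gives
\[
\int_{B_r} \mathcal{A}(x,\nabla u)\cdot \nabla\phi\,dx + \int_{\mathbb{R}^n}\int_{\mathbb{R}^n} J_p(u(x)-u(y))(\phi(x)-\phi(y))\,d\mu \geq \int_{B_r} f\phi\,dx.
\]
Rearranging, the local term plus the nonlocal term is bounded below by $\int f\phi$, so I get an upper bound on the ``good'' gradient term by moving everything else to the other side. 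For the local part, $\nabla\phi = p\psi^{p-1}\nabla\psi\, v^{-\epsilon} - \epsilon\psi^p v^{-\epsilon-1}\nabla v$, and using the coercivity $\mathcal{A}\cdot\nabla u \geq C_1|\nabla u|^p$ and growth $|\mathcal{A}|\leq C_2|\nabla u|^{p-1}$, together with Young's inequality to absorb the cross term, produces the main term $\int \psi^p v^{-\epsilon-1}|\nabla v|^p \asymp \int \psi^p |\nabla(v^{-\alpha/p})|^p$ (with $\alpha = \epsilon - p +1$) on the left, and $\epsilon^{1-p}\int v^{-\alpha}|\nabla\psi|^p$ as the first term on the right; this is the standard Caccioppoli computation and the source of the $\epsilon^{1-p}$ weight.

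The nonlocal term is where the algebraic inequalities enter: I split $\mathbb{R}^n\times\mathbb{R}^n = (B_r\times B_r) \cup (B_r\times B_r^c) \cup (B_r^c \times B_r)$, using symmetry of $K$ to combine the two mixed pieces. On $B_r\times B_r$ I apply Lemma~\ref{alg}(a) (resp. (b)) with $\tau_1 = \psi(x)$, $\tau_2 = \psi(y)$, $a = v(x)$, $b = v(y)$: the first (good) term $|\tau_2 b^{(p-1-\epsilon)/p} - \tau_1 a^{(p-1-\epsilon)/p}|^p$ is nonnegative and just dropped, while $-c_2|\psi(x)-\psi(y)|^p(v^{p-1-\epsilon}(x)+v^{p-1-\epsilon}(y))$ — note $p-1-\epsilon = -\alpha$ in part (a), so $v^{p-1-\epsilon} = v^{-\alpha}$ — goes to the right-hand side as the double-integral term against $|\psi(x)-\psi(y)|^p\,d\mu$. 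For the mixed region $x\in\supp\psi$, $y\in B_r^c$, the test function vanishes at $y$, so the integrand is $J_p(u(x)-u(y))\psi^p(x)v^{-\epsilon}(x)K(x,y)$; I split $y\in B_r^c$ further into $B_R\setminus B_r$ and $\mathbb{R}^n\setminus B_R$. On $B_R\setminus B_r$, $u(y)\geq 0$, so $u(x)-u(y) \leq u(x) \leq v(x)$ forces $J_p(u(x)-u(y)) \leq v(x)^{p-1}$ when the difference is positive (and the negative part only helps the sign), giving after integration the term $(\sup_x\int_{B_r^c}|x-y|^{-n-ps}dy)\int v^{-\alpha}\psi^p$; outside $B_R$ one uses $|u(x)-u(y)|^{p-1} \leq C(v(x)^{p-1} + u_-(y)^{p-1})$ and $v(x)^{-\epsilon}v(x)^{p-1} = v(x)^{-\alpha}$ while $d^{1-p}v(x)^{-\epsilon} \cdot$ the $u_-(y)^{p-1}$ piece produces the $d^{1-p}\sup_x\int_{\mathbb{R}^n\setminus B_R} u_-(y)^{p-1}|x-y|^{-n-ps}dy$ factor (here $v\geq d$ gives $v^{-\epsilon}\leq d^{-\epsilon}$, and $\epsilon-(p-1) = \alpha$ is handled by $v^{-\epsilon} = v^{-\alpha}v^{1-p} \leq v^{-\alpha}d^{1-p}$).

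For the right-hand side source term, $\int_{B_r} f\phi\,dx = \int_{B_r} f\psi^p v^{-\epsilon}\,dx$; writing $v^{-\epsilon} = v^{-\alpha}\cdot v^{1-p} \leq d^{1-p}v^{-\alpha} = d^{1-p}(v^{-\alpha/p})^p$ and then applying Hölder with exponents $\frac{q}{p}$, $\kappa$, and an interpolation exponent (since $q>n$ means $\frac{q}{p}$ is large enough), followed by Young's inequality with parameter $\delta$, splits $\|f\|_{L^{q/p}}\|\psi v^{-\alpha/p}\|^p$ into the displayed $\delta\|\psi v^{-\alpha/p}\|_{L^{p\kappa}}^p + \delta^{-n/(q-n)}\|\psi v^{-\alpha/p}\|_{L^p}^p$; the exponent $\frac{n}{q-n}$ on $\delta$ comes precisely from the interpolation identity $\frac{1}{(q/p)'} = \frac{\lambda}{p\kappa} + \frac{1-\lambda}{p}$ combined with Young. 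Finally I absorb the factor $\alpha^p/\epsilon$ (equivalently $\epsilon^{p-1}$ from Young in the local part times the remaining powers) into the constant by tracking that $\alpha = \epsilon - p +1$, and I record that $C$ stays bounded when $\epsilon$ is bounded away from $p-1$, which is exactly the condition under which $c_1, c_2$ in Lemma~\ref{alg}(a) stay bounded. Part (b) is identical in structure but uses Lemma~\ref{alg}(b), whose error term carries the extra factor $\zeta(\epsilon) + 1 + \epsilon^{1-p}$ that appears in \eqref{negeng2}, and one sets $\beta = p-1-\epsilon > 0$ so that $v^\beta$ replaces $v^{-\alpha}$ throughout.

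I expect the main obstacle to be the bookkeeping in the nonlocal mixed-region estimate: carefully separating the tail contribution into the ``geometric'' part $\int_{B_r^c}|x-y|^{-n-ps}dy$ (which is finite because $\supp\psi\Subset B_r$ gives a positive distance to $\partial B_r$) and the genuine tail $\Tail(u_-;\cdot)$ part, and making sure all powers of $d$, $v$, and $\epsilon$ match the claimed estimate, especially since the algebraic inequality already partitions the $B_r\times B_r$ integrand in a sign-sensitive way. A secondary subtlety is justifying that $\psi^p v^{-\epsilon}$ is an admissible test function in $W_0^{1,p}(B_r)$ — since $v \geq d > 0$ is bounded away from zero and $u \in W^{1,p}_{\mathrm{loc}}$, $v^{-\epsilon}$ is Lipschitz on the range of $v$ on $\supp\psi$ provided we first truncate $u$ from above, pass to the limit by monotone/dominated convergence, which is routine but must be mentioned.
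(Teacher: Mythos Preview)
Your proposal is correct and follows essentially the same route as the paper: test with $\phi=\psi^p v^{-\epsilon}$, split the resulting inequality into the local piece (handled by \eqref{A} and Young), the nonlocal piece on $B_r\times B_r$ (handled by Lemma~\ref{alg}(a) or (b)), the nonlocal cross piece $B_r\times(\mathbb{R}^n\setminus B_r)$ (bounded via $J_p(v(x)-v(y))\le c(v(x)^{p-1}+u_-(y)^{p-1})$ together with $v^{-\epsilon}\le d^{1-p}v^{-\gamma}$), and the source term (H\"older, interpolation between $L^p$ and $L^{p\kappa}$, then Young with $\delta$). Your additional remark on the admissibility of $\psi^p v^{-\epsilon}$ via truncation is a point the paper leaves implicit.
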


\begin{proof}
We prove both inequalities simultaneously. To this end, let $\epsilon > 0$ be such that $\epsilon \neq p - 1$, and define $\gamma = \epsilon - p + 1$. By choosing $\phi = v^{-\epsilon} \psi^{p}$ as a test function in \eqref{weaksubsupsoln}, we obtain
\begin{equation}\label{IJK}
I + J \leq K,
\end{equation}
where
\[
I = \int_{\Omega} \mathcal{A}(x, \nabla v) \nabla \left(-v^{-\epsilon} \psi^{p}\right) \, dx,
\]
\[
J = \int_{\mathbb{R}^n} \int_{\mathbb{R}^n} J_p(v(x) - v(y)) \bigl(v^{-\epsilon}(y) \psi^{p}(y) - v^{-\epsilon}(x) \psi^{p}(x)\bigr) \, d\mu,
\quad \text{and} \quad
K = - \int_{\Omega} f v^{-\epsilon} \psi^{p} \, dx.
\]

\textbf{Estimate of $I$:} To estimate $I$, we use the hypothesis \eqref{A} on $\mathcal{A}$ along with Young's inequality, yielding
\[
p \psi^{p-1} \nabla \psi \cdot \mathcal{A}(x, \nabla v) v^{-\epsilon} \leq \frac{C_1 \epsilon}{2} \psi^{p} v^{-\epsilon - 1} |\nabla v|^{p} + c(p, C_1, C_2) \epsilon^{1-p} v^{-\gamma} |\nabla \psi|^{p}.
\]
Further applying the hypothesis \eqref{A} on $\mathcal{A}$ and the above estimate, we get
\begin{equation}\label{estI}
I \geq c_1(p, C_1, C_2) \frac{\epsilon}{|\gamma|^{p}} \int_{B_r(x_0)} \psi^{p} \left| \nabla \left( v^{-\frac{\gamma}{p}} \right) \right|^{p} \, dx - c(p, C_1, C_2) \epsilon^{1-p} \int_{B_r(x_0)} v^{-\gamma} |\nabla \psi|^{p} \, dx.
\end{equation}

\textbf{Estimate of $J$:} We first write
\[
J = J_1 - 2 J_2,
\]
where
\[
J_1 = \int_{B_r(x_0)} \int_{B_r(x_0)} J_p(v(x) - v(y)) \bigl(v^{-\epsilon}(y) \psi^{p}(y) - v^{-\epsilon}(x) \psi^{p}(x)\bigr) \, d\mu,
\]
and
\[
J_2 = \int_{x \in B_r(x_0)} \int_{y \in \mathbb{R}^n \setminus B_r(x_0)} J_p(v(x) - v(y)) v^{-\epsilon}(x) \psi^{p}(x) \, d\mu.
\]

\textbf{Estimate of $J_1$:} For $\epsilon > p - 1$, applying the inequality \eqref{alg2} from Lemma \ref{alg} yields
\begin{equation}\label{estJ1}
\begin{split}
J_1 &\geq c_1 \int_{B_r(x_0)} \int_{B_r(x_0)} \left| \psi(x) v^{-\frac{\gamma}{p}}(x) - \psi(y) v^{-\frac{\gamma}{p}}(y) \right|^{p} \, d\mu \\
&\quad - c_2 \int_{B_r(x_0)} \int_{B_r(x_0)} \bigl(v^{-\gamma}(x) + v^{-\gamma}(y)\bigr) |\psi(x) - \psi(y)|^{p} \, d\mu,
\end{split}
\end{equation}
where the constants $c_i = c_i(p, \epsilon) > 0$ for $i=1,2$ remain bounded as long as $\epsilon$ stays away from $p - 1$.

For $\epsilon \in (0, p - 1)$, by using the inequality \eqref{alg1} in Lemma \ref{alg}, we obtain
\begin{equation}\label{estJ11}
\begin{split}
J_1 &\geq \frac{\zeta(\epsilon)}{C(p)} \int_{B_r(x_0)} \int_{B_r(x_0)} \left| \psi(x) v^{-\frac{\gamma}{p}}(x) - \psi(y) v^{-\frac{\gamma}{p}}(y) \right|^{p} \, d\mu \\
&\quad - \left(\zeta(\epsilon) + 1 + \epsilon^{1-p}\right) \int_{B_r(x_0)} \int_{B_r(x_0)} \bigl(v^{-\gamma}(x) + v^{-\gamma}(y)\bigr) |\psi(x) - \psi(y)|^{p} \, d\mu,
\end{split}
\end{equation}
where $\zeta(\epsilon) = \frac{\epsilon p^{p}}{(p - 1 - \epsilon)^{p}}$. Moreover, if $0 < p - 1 - \epsilon < 1$, then $\zeta(\epsilon)$ can be chosen as $\frac{\epsilon p^{p}}{p - 1 - \epsilon}$.

\textbf{Estimate of $J_2$:} For any $x \in B_R(x_0)$ and $y \in \mathbb{R}^n$, we note that
\[
J_p(v(x) - v(y)) \leq c(p) \bigl(v^{p-1}(x) + u_-^{p-1}(y)\bigr), \quad \text{and} \quad v^{-\epsilon}(x) \leq d^{1-p} v^{-\gamma}(x).
\]
Since $u \geq 0$ in $B_R(x_0)$, the above imply
\begin{equation}\label{estJ2}
\begin{split}
J_2 &\leq C(\Lambda, p) \Bigg( \sup_{x \in \mathrm{supp} \, \psi} \int_{\mathbb{R}^n \setminus B_r(x_0)} \frac{dy}{|x - y|^{n + ps}} + d^{1-p} \int_{\mathbb{R}^n \setminus B_R(x_0)} \frac{u_-^{p-1}(y)}{|x-y|^{n + ps}} \, dy \Bigg) \\
&\quad \times \int_{B_r(x_0)} v^{-\gamma} \psi^{p} \, dx.
\end{split}
\end{equation}

Therefore, if $\epsilon > p - 1$, using \eqref{estJ1} and \eqref{estJ2} together, we conclude
\begin{equation}\label{estJ}
\begin{split}
J &\geq c_1 \int_{B_r(x_0)} \int_{B_r(x_0)} \left| \psi(x) v^{-\frac{\gamma}{p}}(x) - \psi(y) v^{-\frac{\gamma}{p}}(y) \right|^{p} \, d\mu \\
&\quad - c_2 \int_{B_r(x_0)} \int_{B_r(x_0)} \bigl(v^{-\gamma}(x) + v^{-\gamma}(y)\bigr) |\psi(x) - \psi(y)|^{p} \, d\mu \\
&\quad - 2 C(\Lambda, p) \Bigg( \sup_{x \in \mathrm{supp} \, \psi} \int_{\mathbb{R}^n \setminus B_r(x_0)} \frac{dy}{|x - y|^{n + ps}} + d^{1-p} \int_{\mathbb{R}^n \setminus B_R(x_0)} \frac{u_-^{p-1}(y)}{|x - y|^{n + ps}} \, dy \Bigg) \\
&\quad \times \int_{B_r(x_0)} v^{-\gamma} \psi^{p} \, dx,
\end{split}
\end{equation}
where the constants $c_i = c_i(p, \epsilon) > 0$ for $i=1,2$ remain bounded as long as $\epsilon$ stays away from $p - 1$.
Further, if $\epsilon \in (0, p-1)$, then using \eqref{estJ11} and \eqref{estJ2}, we obtain
\begin{equation}\label{estJnew}
\begin{split}
J &\geq \frac{\zeta(\epsilon)}{C(p)} \int_{B_r(x_0)} \int_{B_r(x_0)} \left| \psi(x) v^{-\frac{\gamma}{p}}(x) - \psi(y) v^{-\frac{\gamma}{p}}(y) \right|^p \, d\mu \\
&\quad - \Big(\zeta(\epsilon) + 1 + \epsilon^{1-p} \Big) \int_{B_r(x_0)} \int_{B_r(x_0)} \bigl( v^{-\gamma}(x) + v^{-\gamma}(y) \bigr) |\psi(x) - \psi(y)|^p \, d\mu \\
&\quad - 2 C(\Lambda, p) \Bigg( \sup_{x \in \mathrm{supp} \, \psi} \int_{\mathbb{R}^n \setminus B_r(x_0)} \frac{dy}{|x - y|^{n + ps}} + d^{1-p} \int_{\mathbb{R}^n \setminus B_R(x_0)} \frac{u_-(y)^{p-1}}{|x - y|^{n + ps}} \, dy \Bigg) \\
&\quad \times \int_{B_r(x_0)} v^{-\gamma} \psi^{p} \, dx.
\end{split}
\end{equation}

\textbf{Estimate of $K$:} Using H\"older's inequality, for every $\delta > 0$, we have
\begin{equation}\label{estK}
\begin{split}
|K| &\leq d^{1-p} \int_{B_r(x_0)} |f| v^{-\gamma} \psi^{p} \, dx 
\leq d^{1-p} \| f \|_{L^{\frac{q}{p}}(B_r(x_0))} \Big\| \psi v^{-\frac{\gamma}{p}} \Big\|^{p}_{L^{\frac{p q}{q - p}}(B_r(x_0))} \\
&\leq d^{1-p} \| f \|_{L^{\frac{q}{p}}(B_r(x_0))} \Big\| \psi v^{-\frac{\gamma}{p}} \Big\|^{\frac{p(q - n)}{q}}_{L^{p}(B_r(x_0))} \Big\| \psi v^{-\frac{\gamma}{p}} \Big\|^{\frac{n p}{q}}_{L^{p \kappa}(B_r(x_0))} \\
&\leq d^{1-p} \| f \|_{L^{\frac{q}{p}}(B_r(x_0))} \left( \frac{n \delta}{q} \Big\| \psi v^{-\frac{\gamma}{p}} \Big\|^{p}_{L^{p \kappa}(B_r(x_0))} + \frac{q - n}{q} \delta^{-\frac{n}{q - n}} \Big\| \psi v^{-\frac{\gamma}{p}} \Big\|^{p}_{L^{p}(B_r(x_0))} \right),
\end{split}
\end{equation}
where the second inequality follows from H\"older's inequality. Moreover, since $q > n$, the third and fourth inequalities follow by interpolation and Young's inequality, respectively.

Therefore, if $\epsilon > p - 1$, then combining the estimates \eqref{estI}, \eqref{estJ} and \eqref{estK} into \eqref{IJK}, the desired inequality \eqref{negeng1} follows. Further, if $\epsilon \in (0, p - 1)$, then combining the estimates \eqref{estI}, \eqref{estJnew} and \eqref{estK} into \eqref{IJK}, the desired inequality \eqref{negeng2} follows. This completes the proof.
\end{proof}
Next, we establish the following logarithmic estimate for weak supersolutions of \eqref{meqn}.

\begin{Lemma}\label{loglem} (Logarithmic estimate)  
Suppose that $0 < r \leq 1$ with $r < \frac{R}{2}$. Let $u$ be a weak supersolution of \eqref{meqn} such that $u \geq 0$ in $B_R(x_0) \subset \Omega$. For $d > 0$, set $v = u + d$. Then there exists a positive constant $c = c(n,p,s,C_1,C_2,\Lambda)$ such that
\begin{equation}\label{loglemeqn}
\begin{split}
\int_{B_{\frac{3r}{2}}(x_0)} |\nabla \log v|^p \psi^p \, dx &\leq c r^n \bigl(r^{-p} + d^{1-p} R^{-p} \mathrm{Tail}(u_-; x_0, R)^{p-1} \bigr) \\
&\quad + c d^{1-p} R^{\frac{n(q-p)}{q}} \|f\|_{L^{\frac{q}{p}}(B_R(x_0))},
\end{split}
\end{equation}
for every $\psi \in C_c^\infty(B_{\frac{3r}{2}}(x_0))$ satisfying
\[
0 \leq \psi \leq 1 \quad \text{in } B_{\frac{3r}{2}}(x_0) \quad \text{and} \quad |\nabla \psi| \leq \frac{c}{r} \quad \text{in } B_{\frac{3r}{2}}(x_0),
\]
for some positive constant $c = c(n,p)$.
\end{Lemma}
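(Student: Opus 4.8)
The plan is to run Moser's logarithmic test in the borderline case $\epsilon=p-1$, which is precisely the exponent excluded from Lemma~\ref{inveng}, so the argument must be carried out directly. I would take $\phi=v^{1-p}\psi^p$ as test function in \eqref{weaksubsupsoln}: since $v=u+d\ge d>0$ on $B_R(x_0)\supset\mathrm{supp}\,\psi$ and $u\in W^{1,p}_{\loc}(\Omega)$, this $\phi$ is nonnegative, lies in $W_0^{1,p}(B_{3r/2}(x_0))$ with $B_{3r/2}(x_0)\Subset\Omega$, and is therefore admissible. Writing the resulting inequality as $I+\mathcal J\ge\int f\phi\,dx$, I would expand $\nabla\phi=(1-p)v^{-p}\psi^p\nabla v+p\,v^{1-p}\psi^{p-1}\nabla\psi$ and use \eqref{A} together with $v^{-p}|\nabla v|^p=|\nabla\log v|^p$ to get $I\le -(p-1)C_1\int\psi^p|\nabla\log v|^p\,dx+pC_2\int\psi^{p-1}|\nabla\log v|^{p-1}|\nabla\psi|\,dx$, and then a Young inequality absorbs the last integral into half of the first. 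This reduces the lemma to upper bounds for $\mathcal J$ and for $\int f\phi\,dx$, since $\int|\nabla\psi|^p\,dx\le c\,r^{-p}|B_{3r/2}(x_0)|=c\,r^{n-p}=c\,r^n r^{-p}$; note that $\int\psi^p|\nabla\log v|^p\,dx<\infty$ because $|\nabla\log v|\le d^{-1}|\nabla v|$, so the absorption is legitimate.

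\textbf{The source and tail terms.} By H\"older's inequality, $\psi\le1$, $v^{1-p}\le d^{1-p}$ and $r<R$, one has $|\int f\phi\,dx|\le d^{1-p}\int_{B_{3r/2}(x_0)}|f|\,dx\le c\,d^{1-p}\|f\|_{L^{q/p}(B_R(x_0))}r^{\frac{n(q-p)}{q}}\le c\,d^{1-p}R^{\frac{n(q-p)}{q}}\|f\|_{L^{q/p}(B_R(x_0))}$. For $\mathcal J$, since $\phi\equiv0$ off $B_{3r/2}(x_0)\subset B_{2r}(x_0)$, I would split at $B_{2r}(x_0)$: using the symmetry of $K$ and the antisymmetry of $J_p(v(x)-v(y))$, $\mathcal J=\mathcal J_{\mathrm{near}}+2\mathcal J_{\mathrm{tail}}$ with $\mathcal J_{\mathrm{near}}$ over $B_{2r}(x_0)\times B_{2r}(x_0)$ and $\mathcal J_{\mathrm{tail}}$ over $\{x\in B_{2r}(x_0),\,y\notin B_{2r}(x_0)\}$. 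In $\mathcal J_{\mathrm{tail}}$ only $x\in B_{3r/2}(x_0)\subset B_R(x_0)$ contributes, so $J_p(v(x)-v(y))\le 2^{p-1}\big(v(x)^{p-1}+u_-(y)^{p-1}\big)$; combining this with $v^{1-p}(x)v(x)^{p-1}=1$, $v^{1-p}(x)\le d^{1-p}$, $\mathrm{dist}(B_{3r/2}(x_0),\mathbb R^n\setminus B_{2r}(x_0))\ge r/2$, the fact that $u_-\equiv0$ on $B_R(x_0)\setminus B_{2r}(x_0)$, and the elementary comparison $|x-y|\ge\tfrac14|y-x_0|$ for $x\in B_{3r/2}(x_0)$, $y\notin B_R(x_0)$, I would obtain $\mathcal J_{\mathrm{tail}}\le c\Lambda\big(r^{-ps}+d^{1-p}R^{-p}\,\mathrm{Tail}(u_-;x_0,R)^{p-1}\big)|B_{3r/2}(x_0)|$, which is of the asserted form after recognising $\mathrm{Tail}$ via \eqref{tail} and using $r^{n-ps}\le r^{n-p}=r^nr^{-p}$ (recall $0<r\le1$ and $s<1$).

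\textbf{The near-diagonal term.} This is the only genuinely new point. I would invoke the borderline analogue of the algebraic inequalities \eqref{alg2}--\eqref{alg1}: there exist $c_1,c_2=c_i(p)>0$ with $J_p(b-a)\big(\tau_1^p a^{1-p}-\tau_2^p b^{1-p}\big)\ge c_1\min\{\tau_1,\tau_2\}^p\big|\log(a/b)\big|^p-c_2|\tau_1-\tau_2|^p$ for all $a,b>0$ and $\tau_1,\tau_2\in[0,1]$; this is proved by reducing, via the symmetry in $(a,\tau_1)\leftrightarrow(b,\tau_2)$, to $a\le b$, then treating $\tau_1\ge\tau_2$ at once and $\tau_1<\tau_2$ after a Young split, the key scalar fact being $(1/t-1)^{p-1}(1-t^{p-1})\ge c(p)\,|\log t|^p$ for $t\in(0,1]$. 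Applying it with $a=v(y)$, $b=v(x)$, $\tau_1=\psi(y)$, $\tau_2=\psi(x)$ bounds the integrand of $\mathcal J_{\mathrm{near}}$ pointwise by $c_2|\psi(x)-\psi(y)|^p$ (the nonpositive logarithmic term is simply discarded, since only the gradient estimate is wanted here), whence $\mathcal J_{\mathrm{near}}\le c_2\Lambda\int_{B_{2r}(x_0)}\int_{B_{2r}(x_0)}\frac{|\psi(x)-\psi(y)|^p}{|x-y|^{n+ps}}\,dx\,dy\le c\Lambda\,r^{n-ps}\le c\Lambda\,r^{n-p}$, using $|\psi(x)-\psi(y)|\le\min\{1,(c/r)|x-y|\}$ and $ps<p$. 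Collecting the three estimates in the reduced inequality and absorbing $\tfrac{(p-1)C_1}{2}\int\psi^p|\nabla\log v|^p\,dx$ yields \eqref{loglemeqn}.

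\textbf{Main obstacle.} The crux is the near-diagonal contribution: the cruder bound $\mathcal J_{\mathrm{near}}\lesssim\int\!\!\int|\psi(x)-\psi(y)|\,|x-y|^{-n-ps}\,dx\,dy$ is useless because the first power of $|\psi(x)-\psi(y)|$ fails to be integrable against $|x-y|^{-n-ps}$ once $ps\ge1$, so one genuinely needs the $p$-th power, and this is exactly what the refined borderline algebraic inequality above provides. One cannot, moreover, recover the estimate by letting $\epsilon\to(p-1)^{\mp}$ in Lemma~\ref{inveng}, both because $\zeta(\epsilon)$ there blows up and because the power of $d$ so produced would not match the required $d^{1-p}$.
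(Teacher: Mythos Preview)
Your proof is correct and follows the same approach as the paper: both take the test function $\phi=v^{1-p}\psi^p$, estimate the local term $I$ via \eqref{A} and Young's inequality, split the nonlocal term into a near-diagonal and a tail part, and bound the source term by H\"older's inequality. The only difference is cosmetic: the paper delegates the nonlocal estimate (your $\mathcal J_{\mathrm{near}}$ and $\mathcal J_{\mathrm{tail}}$) to \cite[pp.~1288--1290, Lemma~1.3]{DKPahp}, whereas you spell out the underlying borderline algebraic inequality at $\epsilon=p-1$ and the kernel computations explicitly.
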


\begin{proof}
Choosing the test function $\phi = v^{1-p} \psi^p$ in \eqref{weaksubsupsoln}, we obtain
\begin{equation}\label{logesttest}
0 \leq I + J + K,
\end{equation}
where
\[
I = \int_{B_{2r}(x_0)} \mathcal{A}(x, \nabla v) \nabla \bigl( v^{1-p} \psi^p \bigr) \, dx,
\]
\[
J = \int_{\mathbb{R}^n} \int_{\mathbb{R}^n} J_p(v(x) - v(y)) \bigl( v^{1-p}(x) \psi^p(x) - v^{1-p}(y) \psi^p(y) \bigr) \, d\mu,
\]
and
\[
K = - \int_{B_{2r}(x_0)} f v^{1-p} \psi^p \, dx.
\]

\textbf{Estimate of $I$:} \\
Using the property \eqref{A} of $\mathcal{A}$ along with Young's inequality, there exist constants $c_i = c_i(n,p,C_1,C_2) > 0$, $i=1,2$, such that
\begin{equation}\label{logI3}
I \leq - c_1 \int_{B_{\frac{3r}{2}}(x_0)} |\nabla \log v|^p \psi^p \, dx + c_2 r^{n-p}.
\end{equation}

\textbf{Estimate of $J$:} \\
By adapting the arguments from \cite[Pages 1288--1290, Lemma 1.3]{DKPahp} and using the properties of $\psi$ together with $0 < r \leq 1$ and $2r < R$, there exists a constant $c = c(n,p,s,\Lambda) > 0$ such that
\begin{equation}\label{logI1}
\begin{aligned}
J &\leq - \frac{1}{c} \int_{B_{2r}(x_0)} \int_{B_{2r}(x_0)} \left| \log\left(\frac{v(x)}{v(y)}\right) \right|^p \psi(y)^p \, d\mu \\
&\quad + c d^{1-p} r^n R^{-p} \mathrm{Tail}(u_-; x_0, R)^{p-1} + c r^{n-p}.
\end{aligned}
\end{equation}

\textbf{Estimate of $K$:} \\
Using the fact that $2r < R$ and H\"older's inequality, we have
\begin{equation}\label{I}
|K| = \left| \int_{B_{2r}(x_0)} f (u + d)^{1-p} \psi^p \, dx \right| \leq c(n,p) d^{1-p} R^{\frac{n(q-p)}{q}} \|f\|_{L^{\frac{q}{p}}(B_R(x_0))}.
\end{equation}

Combining \eqref{logI3}, \eqref{logI1} and \eqref{I} into \eqref{logesttest}, we obtain the desired estimate \eqref{loglemeqn}.
\end{proof}

\begin{Corollary}\label{logapp1}
If $d$ satisfies \eqref{d}, and in addition $\psi \equiv 1$ in $B_r(x_0)$ in Lemma \ref{loglem}, then from \eqref{loglemeqn} we deduce the estimate
\begin{equation}\label{logappeqn}
\int_{B_r(x_0)} |\nabla \log(u + d)|^p \, dx \leq C r^{n - p},
\end{equation}
for some constant $C = C(n,p,q,s,C_1,C_2,\Lambda) > 0$.
\end{Corollary}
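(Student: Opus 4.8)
The plan is to read off \eqref{logappeqn} directly from Lemma~\ref{loglem} by specializing the cutoff and then using the definition of $d$ in \eqref{d} to absorb the tail and source contributions into the leading term $c\,r^{n-p}$. First I would fix a function $\psi \in C_c^\infty(B_{3r/2}(x_0))$ with $0 \le \psi \le 1$, $|\nabla\psi|\le c/r$ for some $c = c(n,p)$, and, in addition, $\psi \equiv 1$ on $B_r(x_0)$; such a $\psi$ exists by mollifying at scale $r/4$ the characteristic function of $B_{5r/4}(x_0)$. Since $v = u+d \ge d > 0$ and $u \in W^{1,p}_{\mathrm{loc}}(\Omega)$, we have $\log(u+d) = \log v \in W^{1,p}_{\mathrm{loc}}$ with $\nabla\log v = \nabla v/v$; and since $\psi^p \ge 1$ on $B_r(x_0)$ and $\psi \ge 0$, estimate \eqref{loglemeqn} gives
\[
\int_{B_r(x_0)} |\nabla\log(u+d)|^p\,dx \;\le\; c\,r^{n-p} + c\,r^{n} d^{1-p} R^{-p}\,\mathrm{Tail}(u_-;x_0,R)^{p-1} + c\,d^{1-p} R^{\frac{n(q-p)}{q}}\,\|f\|_{L^{\frac{q}{p}}(B_R(x_0))}.
\]

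The remaining task is to show that the last two terms are themselves $\le C\,r^{n-p}$, and this is exactly where \eqref{d} enters. Since $d$ dominates, up to a fixed multiplicative constant, the quantity $r^{p'}R^{-p'}\mathrm{Tail}(u_-;x_0,R)$, and since $p'(p-1)=p$, we get $d^{1-p}\lesssim r^{-p}R^{p}\,\mathrm{Tail}(u_-;x_0,R)^{1-p}$, whence $r^{n}d^{1-p}R^{-p}\mathrm{Tail}(u_-;x_0,R)^{p-1}\lesssim r^{n-p}$, the powers of $R$ and of the tail cancelling exactly. Similarly, $d$ dominates $r^{\frac{p-n}{p-1}} R^{\frac{n(q-p)}{q(p-1)}}\|f\|_{L^{q/p}(B_R(x_0))}^{1/(p-1)}$, so $d^{1-p}\lesssim r^{n-p}R^{-\frac{n(q-p)}{q}}\|f\|_{L^{q/p}(B_R(x_0))}^{-1}$ and therefore $d^{1-p} R^{\frac{n(q-p)}{q}}\|f\|_{L^{q/p}(B_R(x_0))}\lesssim r^{n-p}$. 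Substituting these two bounds into the displayed inequality yields \eqref{logappeqn} with $C = C(n,p,q,s,C_1,C_2,\Lambda)$, the implicit constants from \eqref{d} being absorbed into $C$.

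I do not expect any real obstacle here: the analysis is entirely contained in Lemma~\ref{loglem}, and the corollary is essentially bookkeeping against the scaling built into \eqref{d}. The only points requiring care are the exponent identities $p'(p-1)=p$ and $\frac{p-n}{p-1}(p-1)=p-n$, which make the powers of $r$, $R$, $\mathrm{Tail}(u_-;x_0,R)$ and $\|f\|_{L^{q/p}(B_R(x_0))}$ cancel cleanly, and checking that the lower bounds on $d$ used above really follow from \eqref{d} with the correct constant factors --- noting that if \eqref{d} carries an additional additive term such as $\inf_{B_r(x_0)}u$, it only enlarges $d$ and hence only helps, since $1-p<0$.
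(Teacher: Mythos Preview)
Your proposal is correct and is exactly the intended argument: the paper states the corollary as an immediate consequence of \eqref{loglemeqn} once $\psi\equiv 1$ on $B_r(x_0)$ and $d$ satisfies \eqref{d}, and your exponent bookkeeping is precisely how the two extra terms collapse to $Cr^{n-p}$. One minor caveat you might make explicit is the degenerate case where $\mathrm{Tail}(u_-;x_0,R)=0$ or $\|f\|_{L^{q/p}(B_R(x_0))}=0$, since then you cannot divide by these quantities; but in that case the corresponding term on the right of \eqref{loglemeqn} vanishes outright, so the bound is trivial.
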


\subsection{Energy estimate for subsolutions}
Next, we establish the following energy estimate for weak subsolutions of \eqref{meqn}.
\begin{Lemma}\label{subeng}
Let $u$ be a weak subsolution of \eqref{meqn} and denote $w = (u-k)_+$ with {$k\in\mathbb{R}$}. Then, for every $\delta > 0$, there exists a positive constant $c = c(p,C_1,C_2,\Lambda)$ such that
\begin{equation}\label{subengest}
\begin{split}
&\int_{B_r(x_0)} \psi^p \, |\nabla w|^p \, dx \\
&\leq c \Bigg\{ 
\int_{B_r(x_0)} w^p \, |\nabla \psi|^p \, dx
+ \int_{B_r(x_0)} \int_{B_r(x_0)} \max\{ w(x), w(y) \}^p \, |\psi(x)-\psi(y)|^p \, d\mu \\
&\quad + \sup_{x \in \mathrm{supp}\,\psi} \int_{\mathbb{R}^n \setminus B_r(x_0)} \frac{w^{p-1}(y)}{|x-y|^{n+ps}} \, dy 
\times \int_{B_r(x_0)} w\,\psi^p \, dx + {\int_{B_r(x_0)}|f|\,w\,\psi^p\,dx}
\Bigg\},
\end{split}
\end{equation}
whenever $B_r(x_0) \subset \Omega$ and $\psi \in C_c^\infty(B_r(x_0))$ is a nonnegative function.
\end{Lemma}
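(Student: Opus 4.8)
The plan is to test the weak formulation \eqref{weaksubsupsoln} against $\phi = w\,\psi^p$, which is an admissible nonnegative test function (it lies in $W_0^{1,p}$ of a neighbourhood of $\overline{B_r(x_0)}\Subset\Omega$, since $w=(u-k)_+\in W^{1,p}_{\mathrm{loc}}(\Omega)$ and $\psi$ has compact support). Using the subsolution inequality ($\le$) this gives $I+J\le K$, where $I=\int_{B_r(x_0)}\mathcal{A}(x,\nabla u)\cdot\nabla(w\psi^p)\,dx$, $J=\int_{\mathbb{R}^n}\int_{\mathbb{R}^n}J_p(u(x)-u(y))\big(w(x)\psi^p(x)-w(y)\psi^p(y)\big)\,d\mu$ and $K=\int_{B_r(x_0)}f\,w\,\psi^p\,dx$. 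The overall architecture mirrors the proof of Lemma~\ref{inveng}: bound $I$ from below, $J$ from below, $K$ from above, and rearrange. For $I$ one uses that on $\{u>k\}$ we have $\nabla w=\nabla u$, so by \eqref{A} $\mathcal{A}(x,\nabla u)\cdot\nabla w\ge C_1|\nabla w|^p$, while on $\{u\le k\}$ both $w$ and $\nabla w$ vanish; expanding $\nabla(w\psi^p)=\psi^p\nabla w+p\,w\,\psi^{p-1}\nabla\psi$, invoking $|\mathcal{A}(x,\nabla u)|\le C_2|\nabla w|^{p-1}$, and absorbing the cross term with Young's inequality yields $I\ge \tfrac{C_1}{2}\int_{B_r(x_0)}\psi^p|\nabla w|^p\,dx-c(p,C_1,C_2)\int_{B_r(x_0)}w^p|\nabla\psi|^p\,dx$. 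The source term $K$ is treated exactly as the term $K$ in Lemma~\ref{inveng}: Hölder's inequality in the pair $L^{q/p}$–$L^{q/(q-p)}$, interpolation of $L^{pq/(q-p)}$ between $L^p$ and $L^{p\kappa}$, and Young's inequality, producing the $\delta$–$\delta^{-n/(q-n)}$ splitting on the right of \eqref{subengest}.

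The nonlocal term $J$ is the heart of the matter. Since $\psi$ is supported in $B_r(x_0)$, splitting $\mathbb{R}^n\times\mathbb{R}^n$ into the diagonal block $B_r(x_0)\times B_r(x_0)$, the two symmetric cross blocks, and the exterior block (which contributes nothing), one gets $J=J_1+2J_2$ with $J_1=\int_{B_r(x_0)}\int_{B_r(x_0)}J_p(u(x)-u(y))(w(x)\psi^p(x)-w(y)\psi^p(y))\,d\mu$ and $J_2=\int_{B_r(x_0)}\int_{\mathbb{R}^n\setminus B_r(x_0)}J_p(u(x)-u(y))\,w(x)\psi^p(x)\,d\mu$. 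For $J_1$ I would invoke the standard pointwise algebraic inequality for subsolutions (as in \cite{DKPahp}, see also \cite{Chakeretal}), which uses only that $t\mapsto(t-k)_+$ is nondecreasing and $1$-Lipschitz: it bounds $J_p(u(x)-u(y))(w(x)\psi^p(x)-w(y)\psi^p(y))$ below by a nonnegative Gagliardo-type quantity $\gtrsim|w(x)\psi(x)-w(y)\psi(y)|^p$ minus $c\,\max\{w(x),w(y)\}^p|\psi(x)-\psi(y)|^p$; discarding the nonnegative term gives $J_1\ge -c\int_{B_r(x_0)}\int_{B_r(x_0)}\max\{w(x),w(y)\}^p|\psi(x)-\psi(y)|^p\,d\mu$.

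For $J_2$ the key observation — and the reason the exterior term in \eqref{subengest} carries only $w^{p-1}$ — is that the integrand can be negative only when $u(x)<u(y)$, and on the support of $w(\cdot)\psi^p(\cdot)$ one has $w(x)>0$, i.e.\ $u(x)>k$; hence $u(x)<u(y)$ forces $u(y)>k$ as well, so $|u(x)-u(y)|^{p-1}=(w(y)-w(x))^{p-1}\le w(y)^{p-1}$. Together with the trivial bound when $w(x)=0$ or $u(x)\ge u(y)$, this gives the pointwise estimate $J_p(u(x)-u(y))\,w(x)\psi^p(x)\ge -w(y)^{p-1}w(x)\psi^p(x)$, whence, using \eqref{K}, $J_2\ge -\Lambda\big(\sup_{x\in\mathrm{supp}\,\psi}\int_{\mathbb{R}^n\setminus B_r(x_0)}|x-y|^{-n-ps}\,w(y)^{p-1}\,dy\big)\int_{B_r(x_0)}w\psi^p\,dx$. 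Feeding the lower bounds for $I$, $J_1$, $J_2$ and the upper bound for $K$ into $I+J\le K$ and rearranging produces \eqref{subengest}. The only genuinely delicate ingredient is the algebraic inequality behind the estimate for $J_1$ — ensuring the nonlinear, sign-indefinite pairing splits correctly into a "good'' term in $w\psi$ and an error in $\max\{w(x),w(y)\}|\psi(x)-\psi(y)|$ — which is why I would quote it rather than reprove it; the remaining steps are bookkeeping entirely parallel to Lemma~\ref{inveng}.
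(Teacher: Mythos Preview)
Your proposal is correct and follows essentially the same approach as the paper: the paper's own proof simply refers to \cite[Lemma~3.1]{GKtams} for the treatment of $I$, $J_1$, and $J_2$ (which is precisely the test function $\phi=w\psi^p$, the split $J=J_1+2J_2$, the algebraic inequality from \cite{DKPahp} for $J_1$, and the sign analysis for $J_2$ that you describe), and handles the source term $K$ exactly as in the proof of \eqref{estK} with $v^{-\gamma}$ replaced by $w^p$, just as you indicate. Your write-up is in fact more detailed than the paper's, which outsources everything but the $K$-estimate.
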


\begin{proof}
The proof follows by arguing similarly as in the proof of \cite[Lemma 3.1]{GKtams}, {by choosing $\varphi = w \psi^p$ as a test function in \eqref{weaksubsupsoln}}.
\end{proof}

\section{Supersolution and tail estimates}
\subsection{Inverse estimate for supersolutions}
\begin{Lemma}\label{Cor}
Suppose $0 < r \leq 1$ such that $r < R$. Let $u$ be a weak supersolution of \eqref{meqn} satisfying
$u \geq 0$ in $B_R(x_0) \subset \Omega$. Define $v = u + d$, where
\begin{equation}\label{d}
d > r^{p'} R^{-p'} \mathrm{Tail}(u_-; x_0, R) + r^{\frac{p-n}{p-1}} R^{\frac{n(q-p)}{q(p-1)}} \|f\|_{L^{\frac{q}{p}}(B_R(x_0))}^{\frac{1}{p-1}}.
\end{equation}
Let $0 < \gamma < 1$. Then there exist constants $C = C(n, p, q, s, \gamma, C_1, C_2, \La) > 0$ and $\theta = \theta(n, p, q, s) > 0$ such that
\begin{equation}\label{invest}
\sup_{B_{\sigma' r}(x_0)} (u + d)^{-1} \leq \left( \frac{C}{(\sigma - \sigma')^\theta} \right)^{\frac{1}{t}} \left( \fint_{B_{\sigma r}(x_0)} (u + d)^{-t} \, dx \right)^{\frac{1}{t}},
\end{equation}
holds for all $0 < \gamma \leq \sigma' < \sigma \leq 1$ and for all $t > 0$.
\end{Lemma}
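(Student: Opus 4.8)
The plan is to run a Moser iteration on negative powers of $v = u+d$, using the energy estimate from Lemma~\ref{inveng}(a) as the engine and the Gagliardo--Nirenberg--Sobolev inequality (Lemma~\ref{emb}) to gain integrability at each step. The key observation is that the hypothesis \eqref{d} on $d$ is exactly what makes the ``bad'' terms in \eqref{negeng1}—the tail contribution $d^{1-p}R^{-p}\Tail(u_-;x_0,R)^{p-1}$ and the source contribution $d^{1-p}\|f\|_{L^{q/p}(B_R(x_0))}^{1/(p-1)}$—controllable by $r^{-p}$ after rescaling, so that the iteration closes with the same structure as in the homogeneous case.

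First I would fix the exponent: for a parameter to be iterated, write $\alpha = \epsilon - p + 1 > 0$ (so $\epsilon > p-1$ as required by Lemma~\ref{inveng}(a)), set $w = v^{-\alpha/p}$, and choose $\psi$ to be a standard cutoff between two nested balls $B_{\sigma'' r} \subset B_{\sigma' r}$ with $|\nabla\psi| \lesssim (r(\sigma'-\sigma''))^{-1}$. Plugging into \eqref{negeng1} and absorbing the term $\delta\|\psi v^{-\alpha/p}\|_{L^{p\kappa}}^p$ on the left via Lemma~\ref{emb} (choosing $\delta$ small depending on the Sobolev constant and on $d^{1-p}\|f\|_{L^{q/p}}$, which by \eqref{d} is a bounded quantity after extracting the $r$-powers), one obtains a reverse-Hölder-type inequality
\[
\Big(\fint_{B_{\sigma'' r}} v^{-\alpha\kappa}\,dx\Big)^{\frac{1}{\alpha\kappa}} \leq \Big(\frac{C}{(\sigma'-\sigma'')^{\theta_0}}\Big)^{\frac{1}{\alpha}}\Big(\fint_{B_{\sigma' r}} v^{-\alpha}\,dx\Big)^{\frac{1}{\alpha}},
\]
valid for all small enough $\alpha$, with $C$ depending on the structural constants but not on $\alpha$ in a way that blows up (one must track that the constant in \eqref{negeng1} stays bounded since we keep $\epsilon$ bounded away from $p-1$, i.e. $\alpha$ bounded away from $0$; the genuinely small-$\alpha$ regime is handled separately or the iteration is started from a fixed $\alpha_0$).

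Then I would iterate: set $\alpha_j = t\kappa^j$ and $\sigma_j = \sigma' + (\sigma-\sigma')2^{-j}$, apply the above inequality with $\alpha = \alpha_j$ on the pair $(\sigma_j,\sigma_{j+1})$, and multiply the resulting chain. The exponents telescope, $\sum \kappa^{-j} < \infty$ controls the product of the $(\sigma_j-\sigma_{j+1})^{-\theta_0/\alpha_j}$ factors into a single $(\sigma-\sigma')^{-\theta/t}$ with $\theta = \theta(n,p,q,s)$, and the product of the constants converges to $(C/(\sigma-\sigma')^\theta)^{1/t}$; in the limit $j\to\infty$ the left side becomes $\sup_{B_{\sigma' r}} v^{-1}$ raised to the $1/t$ power, yielding \eqref{invest} for the chosen $t$. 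For general $t>0$ not of the form needed to start the iteration, a standard interpolation argument together with Lemma~\ref{ite} (applied to $f(\sigma) = \sup_{B_{\sigma r}} v^{-1}$ or to a logarithm thereof) upgrades the estimate to all $t>0$.

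The main obstacle I anticipate is bookkeeping the dependence of the constant $C$ in \eqref{negeng1} on $\epsilon$ (equivalently $\alpha$) through the whole iteration: the factor $\alpha^p/\epsilon$ and the $\epsilon^{1-p}$ inside the braces both degenerate as $\epsilon \downarrow p-1$, i.e. $\alpha\downarrow 0$, which is precisely the regime the iteration must pass through at its first few steps. The resolution is the one sketched above—run the iteration only for $\alpha_j \geq \alpha_0 > 0$ with $\alpha_0$ fixed, obtaining \eqref{invest} for $t \geq \alpha_0$ first, and then fill in $0 < t < \alpha_0$ by a separate argument (either a reverse-Hölder self-improvement in the small-exponent range, keeping careful track that the constants there blow up only polynomially in $1/t$ and get absorbed into $C = C(\dots,\gamma)$, or Lemma~\ref{ite}). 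A secondary technical point is handling the nonlocal cutoff terms $\int\int (v^{-\alpha}(x)+v^{-\alpha}(y))|\psi(x)-\psi(y)|^p\,d\mu$ and the $\sup_x\int_{\mathbb{R}^n\setminus B_r}|x-y|^{-n-ps}\,dy$ term; both are standard—the first is bounded by $C r^{n-p}\fint v^{-\alpha}$ using $|\psi(x)-\psi(y)|\leq |\nabla\psi|_\infty|x-y|$ on the diagonal and $\psi\leq 1$ off it, and the second is $\lesssim r^{-p}$ since $r\leq 1$—and they merge cleanly into the $r^{-p}$-scaled bad terms already accounted for via \eqref{d}.
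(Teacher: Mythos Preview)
Your proposal is correct and matches the paper's approach: Moser iteration on negative powers of $v$ driven by the energy estimate \eqref{negeng1} and Lemma~\ref{emb}, with the hypothesis \eqref{d} used to make the tail and source contributions comparable to $r^{-p}$, and $\delta$ chosen so that the $\delta\|\psi v^{-\alpha/p}\|_{L^{p\kappa}}^p$ term is absorbed on the left. The only cosmetic differences are that the paper uses the radii $\sigma_j = \sigma-(\sigma-\sigma')(1-\kappa^{-j})$ and starts the iteration at the fixed exponent $\alpha_0 = p$ (so $\epsilon$ is automatically bounded away from $p-1$, sidestepping exactly the degeneracy you flag), then extends to general $t>0$ by the standard sup--$L^t$ interpolation-and-iteration trick you also describe at the end.
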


\begin{proof}
Let $0 < \gamma < \sigma' < \sigma \leq 1$. Recall $\kappa = \frac{n}{n-p}$, where $1 < p < n$. For $j = 0, 1, \ldots,$ define
\[
\sigma_j = \sigma - (\sigma - \sigma') \big(1 - \kappa^{-j}\big), \quad r_j = \sigma_j r, \quad \text{and} \quad B_j = B_{r_j}(x_0).
\]
Let $\psi_j \in C_c^\infty(B_j)$ satisfy
\begin{equation}\label{psi}
\begin{cases}
0 \leq \psi_j \leq 1 \text{ in } B_j, \\[6pt]
|\nabla \psi_j| \leq \dfrac{c \kappa^j}{(\sigma - \sigma') r} \text{ in } B_j, \\[6pt]
\psi_j \equiv 1 \text{ in } B_{j+1}, \text{ and } \mathrm{dist}(\mathrm{supp} \, \psi_j, \mathbb{R}^n \setminus B_j) \geq 2^{-j-1} r,
\end{cases}
\end{equation}
for some constant $c=c(n,p)>0$.
Let $\epsilon > p - 1$. For $\alpha = \epsilon - p + 1 > 0$, we observe that
\begin{equation}\label{11}
\fint_{B_{j+1}} v^{-\alpha \kappa} \, dx \leq \frac{1}{|B_{j+1}|} \int_{B_j} \left| v^{-\frac{\alpha}{p}} \psi_j \right|^{p \kappa} \, dx.
\end{equation}
By Lemma \ref{emb}, we have
\begin{equation}\label{22}
\begin{split}
\int_{B_j} \left| v^{-\frac{\alpha}{p}} \psi_j \right|^{p \kappa} \, dx &\leq C \left( \int_{B_j} \left| \nabla \big( v^{-\frac{\alpha}{p}} \psi_j \big) \right|^p \, dx \right)^\kappa \\
&\leq C \left( \int_{B_j} \left| \nabla \big( v^{-\frac{\alpha}{p}} \big) \right|^p \psi_j^p \, dx \right)^\kappa + C \left( \int_{B_j} |\nabla \psi_j|^p v^{-\alpha} \, dx \right)^\kappa := (I + J),
\end{split}
\end{equation}
where $C = C(n,p) > 0$ is a constant.

\textbf{Estimate of $I$:} Using the estimate \eqref{negeng1} from Lemma \ref{inveng}, {for every $\delta>0$}, we get
\begin{equation}\label{estIInew}
\begin{split}
& \int_{B_j} \psi_j^p \left| \nabla \left( v^{-\frac{\alpha}{p}} \right) \right|^p \, dx \\
&\leq C \alpha^p \Bigg\{ \int_{B_j} v^{-\alpha} |\nabla \psi_j|^p \, dx + \int_{B_j} \int_{B_j} \big( v^{-\alpha}(x) + v^{-\alpha}(y) \big) |\psi_j(x) - \psi_j(y)|^p \, d\mu \\
&\quad + \left( \sup_{x \in \mathrm{supp} \, \psi_j} \int_{\mathbb{R}^n \setminus B_j} \frac{dy}{|x - y|^{n + p s}} + d^{1-p} \int_{\mathbb{R}^n \setminus B_R(x_0)} \frac{u_-(y)^{p-1}}{|y - x_0|^{n + p s}} \, dy \right) \int_{B_j} v^{-\alpha} \psi_j^p \, dx \\
&\quad + d^{1-p} \|f\|_{L^{\frac{q}{p}}(B_R(x_0))} \Big( \delta \|\psi_j v^{-\frac{\alpha}{p}}\|_{L^{p \kappa}(B_j)}^p + \delta^{-\frac{n}{q - n}} \|\psi_j v^{-\frac{\alpha}{p}}\|_{L^p(B_j)}^p \Big) \Bigg\} \\
&:= C \alpha^p (I_1 + I_2 + I_3 + I_4),
\end{split}
\end{equation}
where $C = C(C_1, C_2, \Lambda, p, \epsilon) > 0$ is a constant bounded as long as $\epsilon$ stays away from $p - 1$.

\textbf{Estimate of $I_1$:} Using condition \eqref{psi} on $\psi_j$, we have
\begin{equation}\label{estI1}
I_1 \leq C \left( \frac{\kappa^j}{(\sigma - \sigma') r_j} \right)^p \int_{B_j} v^{-\alpha} \, dx.
\end{equation}

\textbf{Estimate of $I_2$:} Again, by condition \eqref{psi} on $\psi_j$ and since $0 < r \leq 1$, it follows that
\begin{equation}\label{estI2}
I_2 \leq C \left( \frac{\kappa^j}{(\sigma - \sigma') r_j} \right)^p \int_{B_j} v^{-\alpha} \, dx.
\end{equation}

\textbf{Estimate of $I_3$:} For any $x \in \mathrm{supp} \, \psi_j$ and $y \in \mathbb{R}^n \setminus B_j$, using the property \eqref{psi}, we have
\[
\frac{1}{|x - y|} = \frac{1}{|y - x_0|} \frac{|y - x_0|}{|x - y|} \leq \frac{1}{|y - x_0|} \left( 1 + \frac{|x - x_0|}{|x - y|} \right) \leq \frac{1}{|y - x_0|} \left(1 + \frac{r}{2^{-j-1} r} \right) \leq \frac{2^{j+2}}{|y - x_0|}.
\]
Using $0 < r \leq 1$, this implies
\[
\sup_{x \in \mathrm{supp} \, \psi_j} \int_{\mathbb{R}^n \setminus B_j} \frac{dy}{|x - y|^{n + p s}} \leq C \frac{2^{j(n + p s)}}{(\sigma - \sigma')^p r_j^p}.
\]
Again, using $0 < r \leq 1$ along with the assumption \eqref{d} on $d$, we get
\[
d^{1-p} \int_{\mathbb{R}^n \setminus B_R(x_0)} \frac{u_-(y)^{p-1}}{|x-y|^{n + p s}} \, dy \leq \leq C \frac{2^{j(n + p s)}}{(\sigma - \sigma')^p r_j^p}.
\]
Hence,
\begin{equation}\label{estI3}
I_3 \leq C \frac{2^{j(n + p s)}}{(\sigma - \sigma')^p r_j^p} \int_{B_j} v^{-\alpha} \, dx.
\end{equation}

\textbf{Estimate of $I_4$:} Set $\delta = \delta_0 r^{q-n} R^{-\frac{(q-p)(q-n)}{q}}$ where $\delta_0 > 0$ is a constant to be chosen later. Using the assumption \eqref{d} on $d$ and $r < R$, we have
\begin{equation}\label{estI4}
I_4 \leq \delta_0 \|\psi_j v^{-\frac{\alpha}{p}}\|_{L^{p \kappa}(B_j)}^p + \delta_0^{-\frac{n}{q - n}} r^{-p} \|\psi_j v^{-\frac{\alpha}{p}}\|_{L^p(B_j)}^p.
\end{equation}
Combining the estimates \eqref{estI1}, \eqref{estI2}, \eqref{estI3}, and \eqref{estI4} into \eqref{estIInew}, we obtain
\begin{equation}\label{estIII}
\begin{split}
I &\leq C \alpha^{p\kappa} \sum_{i=1}^4 I_i^\kappa \\
&= C \alpha^{p\kappa} \left( \frac{(2\kappa)^{j(n+ps+p)}}{(\sigma-\sigma')^{p} r_j^{p}} \int_{B_j} v^{-\alpha} \, dx \right)^\kappa + C \alpha^{p\kappa} \left( \delta_0^{\kappa} \big\| \psi_j v^{-\frac{\alpha}{p}} \big\|_{L^{p\kappa}(B_j)}^{p\kappa} + \delta_0^{-\frac{n\kappa}{q-n}} r^{-p\kappa} \big\| \psi_j v^{-\frac{\alpha}{p}} \big\|_{L^p(B_j)}^{p\kappa} \right).
\end{split}
\end{equation}
In the above estimates $C = C(n,p,s,C_1,C_2,\La,\epsilon,\gamma) > 0$ is a constant bounded when $\epsilon$ is bounded away from $p-1$. 

Now, choosing $\delta_0 > 0$ such that $C \alpha^{p\kappa} \delta_0^{\kappa} = \frac{1}{2}$, the above inequality simplifies to
\begin{equation}\label{estIV}
\begin{split}
I &\leq C \alpha^{p\kappa} \left( \frac{(2\kappa)^{j(n+ps+p)}}{(\sigma-\sigma')^{p} r_j^{p}} \int_{B_j} v^{-\alpha} \, dx \right)^\kappa + \left( \frac{1}{2} \big\| \psi_j v^{-\frac{\alpha}{p}} \big\|_{L^{p\kappa}(B_j)}^{p\kappa} + 2^{\frac{n}{q-n}} C^{\frac{n}{q-n}} \alpha^{p\kappa \left(1 + \frac{n}{q-n}\right)} r^{-p\kappa} \big\| \psi_j v^{-\frac{\alpha}{p}} \big\|_{L^p(B_j)}^{p\kappa} \right) \\
&\leq C \max \left\{ \alpha^{p\kappa}, \alpha^{p\kappa \left(1 + \frac{n}{q-n}\right)} \right\} \left( \frac{(2\kappa)^{j(n+ps+p)}}{(\sigma-\sigma')^{p} r_j^{p}} \int_{B_j} v^{-\alpha} \, dx \right)^\kappa + \frac{1}{2} \big\| \psi_j v^{-\frac{\alpha}{p}} \big\|_{L^{p\kappa}(B_j)}^{p\kappa},
\end{split}
\end{equation}
where $C = C(n,p,s,C_1,C_2,\La,\epsilon,\gamma) > 0$ is a constant bounded as long as $\epsilon$ is bounded away from $p-1$.

\textbf{Estimate of $J$:} Using \eqref{psi}, we have
\begin{equation}\label{estJJ}
J \leq C \left( \frac{(2\kappa)^{j(n+ps+p)}}{(\sigma-\sigma')^{p} r_j^{p}} \int_{B_j} v^{-\alpha} \, dx \right)^\kappa,
\end{equation}
where $C = C(n,p,s,\gamma) > 0$ is a constant. 

Hence, combining \eqref{estIV} and \eqref{estJJ} in \eqref{22}, we deduce
\begin{equation}\label{3}
\int_{B_j} \left| v^{-\frac{\alpha}{p}} \psi_j \right|^{p\kappa} dx \leq C \left( \max \left\{ \alpha^{p\kappa}, \alpha^{p\kappa \left(1 + \frac{n}{q-n} \right)} \right\} + 1 \right) \left( \frac{(2\kappa)^{j(n+ps+p)}}{(\sigma-\sigma')^{p} r_j^{p}} \int_{B_j} v^{-\alpha} dx \right)^\kappa,
\end{equation}
where $C = C(n,p,s,C_1,C_2,\La,\epsilon,\gamma) > 0$ is bounded as long as $\epsilon$ is bounded away from $p-1$.

Using \eqref{3} in \eqref{11}, we obtain the estimate ({In the first line below, I have removed the typo $r_j^p$ from the denominator of the averaged integral.})
\begin{equation}\label{4}
\begin{split}
\fint_{B_{j+1}} v^{-\alpha \kappa} dx &\leq C \frac{r_j^{-p\kappa} |B_j|^\kappa}{|B_{j+1}|} \left( \frac{\left( \max \left\{ \alpha^{p}, \alpha^{p \left(1 + \frac{n}{q-n} \right)} \right\} + 1 \right) (2\kappa)^{j(n+ps+p)}}{(\sigma - \sigma')^{p}} \fint_{B_j} v^{-\alpha} dx \right)^\kappa \\
&\leq C \left( \frac{\left( \max \left\{ \alpha^{p}, \alpha^{p \left(1 + \frac{n}{q-n} \right)} \right\} + 1 \right) (2\kappa)^{j(n+ps+p)}}{(\sigma - \sigma')^{p}} \fint_{B_j} v^{-\alpha} dx \right)^\kappa,
\end{split}
\end{equation}
where we used that the quantity $\frac{r_j^{-p\kappa} |B_j|^\kappa}{|B_{j+1}|}$ is bounded by a constant independent of $j, r, \sigma$ and $\sigma'$. Also, $C = C(n,p,s,C_1,C_2,\La,\epsilon,\gamma) > 0$ is bounded as long as $\epsilon$ is bounded away from $p-1$.

We now apply the Moser iteration technique to \eqref{4}. To this end, choose $\alpha_j = p \kappa^j$ for $j=0,1,\dots$. Noting that $\alpha_j \geq p$, iterating inequality \eqref{4} yields
\begin{align*}
\left( \fint_{B_0} v^{-p} dx \right)^{\frac{1}{p}} &\geq \left( \frac{\sigma - \sigma'}{C} \right)^{1 + \kappa^{-1} + \cdots + \kappa^{-m+1}} \prod_{j=0}^{m-1} (2\kappa)^{-\frac{(l + n + ps + p) j}{p \kappa^j}} \left( \fint_{B_m} v^{-p \kappa^m} dx \right)^{\frac{1}{p \kappa^m}},
\end{align*}
where $l = \frac{p q}{q - n}$ and $C = C(n,p,q,s,C_1,C_2,\La,\gamma) > 0$ is a constant. Following the approach in \cite[page 15]{KK}, letting $m \to \infty$ in the above inequality, we conclude the desired estimate \eqref{invest}.
\end{proof}

\subsection{Reverse H\"older inequality for supersolutions}
\begin{Lemma}\label{RevH}
Let $0 < r \leq 1$ with $r < R$. Suppose $u$ is a weak supersolution of \eqref{meqn} such that $u \geq 0$ in $B_R(x_0) \subset \Om$. Define $v = u + d$, where $d$ satisfies \eqref{d}. For any $0 < \gamma < 1$, there exist constants 
\[
C = C(n,p,q,s,C_1,C_2,\La,\la,\gamma) > 0 \quad \text{and} \quad 
\theta = \theta(n,p,s) > 0
\] 
such that
\begin{equation}\label{revest}
\begin{split}
\Bigg( \fint_{B_{\sigma' r}(x_0)} v^{\lambda} \, dx \Bigg)^{\frac{1}{\lambda}}
\;\leq\; 
\Bigg( \frac{C}{(\sigma - \sigma')^{\theta}} \Bigg)^{\frac{1}{\mu}}
\Bigg( \fint_{B_{\sigma r}(x_0)} v^{\mu} \, dx \Bigg)^{\frac{1}{\mu}},
\end{split}
\end{equation}
for all $0 < \gamma \leq \sigma' < \sigma \leq 1$ and $0 < \mu < \lambda < \lambda_0$, where $\lambda_0 = (p-1)\kappa$.
\end{Lemma}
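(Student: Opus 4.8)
The plan is to run a Moser iteration using the positive-exponent energy estimate \eqref{negeng2} from Lemma \ref{inveng}, together with the Sobolev embedding Lemma \ref{emb}, in the same spirit as the proof of Lemma \ref{Cor}, but now iterating \emph{upward} in the exponent. Fix $0<\gamma\le\sigma'<\sigma\le1$ and set, for $j=0,1,2,\dots$,
\[
\sigma_j=\sigma-(\sigma-\sigma')(1-\kappa^{-j}),\qquad r_j=\sigma_j r,\qquad B_j=B_{r_j}(x_0),
\]
with cutoffs $\psi_j\in C_c^\infty(B_j)$ as in \eqref{psi}. For $\epsilon\in(0,p-1)$ and $\beta=p-1-\epsilon\in(0,p-1)$, the Sobolev inequality gives
\[
\Big(\fint_{B_{j+1}} v^{\beta\kappa}\,dx\Big)^{1/\kappa}\le \frac{C}{|B_{j+1}|^{1/\kappa}}\int_{B_j}\big|\nabla(v^{\beta/p}\psi_j)\big|^p\,dx
\le \frac{C}{|B_{j+1}|^{1/\kappa}}\Big(\int_{B_j}\psi_j^p\big|\nabla(v^{\beta/p})\big|^p\,dx+\int_{B_j}|\nabla\psi_j|^p v^{\beta}\,dx\Big),
\]
and then one substitutes \eqref{negeng2} for the first term on the right.

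The next step is to absorb all the error terms. The terms $I_1,I_2,I_3$ (the gradient-of-cutoff term, the nonlocal cutoff-difference term, and the two supremum integrals controlled via \eqref{d} exactly as in the proof of Lemma \ref{Cor}) each produce a factor $C(2\kappa)^{j(n+ps+p)}(\sigma-\sigma')^{-p}r_j^{-p}\int_{B_j}v^\beta\,dx$, with constants now also depending on $\zeta(\epsilon)$; since $\epsilon$ (equivalently $\beta$) will be chosen to stay in a compact subinterval of $(0,p-1)$ — bounded away from both endpoints — the factors $\zeta(\epsilon)$, $\epsilon^{1-p}$ and $\beta^p$ are all bounded, so they may be swallowed into the constant. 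The term $I_4$ involving $\|f\|_{L^{q/p}}$ is handled by the same choice $\delta=\delta_0 r^{q-n}R^{-(q-p)(q-n)/q}$ as in Lemma \ref{Cor}, after which one picks $\delta_0$ so that the coefficient of $\|\psi_j v^{\beta/p}\|_{L^{p\kappa}(B_j)}^p$ equals $\tfrac12$ and absorbs that piece into the left-hand side (here one uses that $v^{\beta/p}\psi_j$ has compact support in $B_j$ so $\|\psi_j v^{\beta/p}\|_{L^{p\kappa}(B_j)}$ is finite — this is where boundedness of the quantity $r_j^{-p\kappa}|B_j|^\kappa/|B_{j+1}|$ and the a priori finiteness coming from Lemma \ref{Cor}/the assumption $\lambda<\lambda_0$ enters). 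This yields the one-step inequality
\[
\Big(\fint_{B_{j+1}} v^{\beta\kappa}\,dx\Big)^{1/(\beta\kappa)}\le \Big(\frac{C\,(2\kappa)^{j(n+ps+p)}}{(\sigma-\sigma')^{p}}\Big)^{1/\beta}\Big(\fint_{B_j} v^{\beta}\,dx\Big)^{1/\beta},
\]
valid for each fixed admissible $\beta$.

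To turn this into \eqref{revest} I iterate along exponents $\beta_j=\mu\kappa^{j}$ starting from $\beta_0=\mu$. The requirement $\lambda<\lambda_0=(p-1)\kappa$ guarantees that after finitely many steps the exponent crosses $\lambda$; more precisely one iterates \eqref{negeng2} only finitely many times — as long as $\beta_j<p-1$, i.e. $\epsilon_j=p-1-\beta_j>0$ — and chooses the number of steps $N$ so that $\mu\kappa^N\ge\lambda$, then interpolates down from exponent $\mu\kappa^N$ to $\lambda$ by Hölder's inequality on $B_{\sigma'r}$. Multiplying the finitely many one-step estimates (telescoping the geometric sums $\sum\kappa^{-j}$ and $\sum j\kappa^{-j}$, which converge) produces a constant of the form $C/(\sigma-\sigma')^{\theta}$ with $\theta=\theta(n,p,s)$ and $C$ depending on the listed parameters, including $\lambda,\gamma$. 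The main obstacle is the bookkeeping near the endpoint: one must keep $\beta_j$ bounded away from $p-1$ throughout the iteration (so that $\zeta(\epsilon_j)$, $\epsilon_j^{1-p}$ stay controlled and \eqref{alg1} is applicable with uniform constants), which is exactly why the statement restricts to $\lambda<(p-1)\kappa$ rather than $\lambda\le(p-1)\kappa$, and why only a \emph{finite} Moser iteration (followed by a Hölder interpolation step) is used here, in contrast to the infinite iteration in Lemma \ref{Cor}. Care is also needed to verify that the doubling/geometry constants $r_j^{-p\kappa}|B_j|^\kappa/|B_{j+1}|$ remain uniformly bounded over $j$ and over $\gamma\le\sigma'<\sigma\le1$, which follows since $\sigma_j\in[\gamma,1]$.
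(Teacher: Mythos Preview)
Your overall strategy matches the paper's: a finite Moser iteration via the positive-exponent estimate \eqref{negeng2} and Sobolev embedding, followed by a H\"older interpolation. But your choice of exponents $\beta_j=\mu\kappa^j$ contains a gap. You need some $N$ with $\mu\kappa^{N-1}<p-1$ (so the last step is admissible) and $\mu\kappa^N\ge\lambda$; equivalently $\kappa^N\in[\lambda/\mu,\,(p-1)\kappa/\mu)$. When $\lambda>p-1$ this interval has $\log_\kappa$-length strictly less than $1$ and need not contain any power of $\kappa$. For instance with $\kappa=2$, $p-1=1$, $\mu=0.6$, $\lambda=1.5$: one admissible step reaches $1.2$, which is below $\lambda$ yet already above $p-1$, so no further step is possible and you are stuck short of $\lambda$. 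The same obstruction appears whenever $\mu\ge p-1$, where not even the first step is admissible. So the sentence ``the requirement $\lambda<(p-1)\kappa$ guarantees that after finitely many steps the exponent crosses $\lambda$'' is not justified as written.

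The paper avoids this by iterating from $\rho_0=\lambda\kappa^{-m}\le\mu$ (with $m$ the least integer such that $\mu\kappa^m\ge\lambda$) rather than from $\mu$ itself; then every intermediate exponent satisfies $\rho_j=\lambda\kappa^{j-m}\le\lambda/\kappa<p-1$ for $j\le m-1$, all $m$ steps are admissible, and one lands exactly at $\rho_m=\lambda$. A single H\"older inequality on $B_{\sigma r}$ then upgrades $\rho_0$ to $\mu$ on the right-hand side (instead of your interpolation on the left). With this correction your argument goes through and yields $\theta=(n+ps+p)\kappa^2/(\kappa-1)$. A minor related point: the paper normalizes the radii as $\sigma_j=\sigma-(\sigma-\sigma')\frac{1-\kappa^{-j}}{1-\kappa^{-m}}$ so that $\sigma_m=\sigma'$ exactly; your unnormalized radii also work, at the cost of a harmless factor bounded by $\gamma^{-n}$.
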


\begin{Remark}\label{Rmk}
It is important to note that Lemma \ref{RevH} is derived here using inequality \eqref{alg2}, which differs from the one employed to establish the reverse H\"older inequality for the equation \eqref{meqn1} in \cite[estimate (8.16)]{GKtams} when $f \equiv 0$. Moreover, the above inequality \eqref{revest} is new for nontrivial $f$.
\end{Remark}

\begin{proof}
We take $0 < \gamma < \sigma' < \sigma \leq 1$ and recall that $\kappa = \frac{n}{n-p}$ with $1 < p < n$. We partition the interval $(\sigma, \sigma')$ into $m$ subintervals, where $j = 0,1,\ldots,m$ (with $m$ to be chosen later), and define
\[
\sigma_j = \sigma - (\sigma - \sigma') \frac{1 - \kappa^{-j}}{1 - \kappa^{-m}}, 
\quad r_j = \sigma_j r, 
\quad B_j = B_{r_j}(x_0).
\]
Let $\psi_j \in C_c^\infty(B_j)$ be as defined in \eqref{psi}. Let $\epsilon\in(0,p-1)$, then for $\beta = p-1 - \epsilon > 0$, we obtain
\begin{equation}\label{1}
\fint_{B_{j+1}} v^{\beta \kappa} \, dx 
\;\leq\; 
\frac{1}{|B_{j+1}|} \int_{B_j} \big| v^{\frac{\beta}{p}} \psi_j \big|^{p\kappa} \, dx.
\end{equation}

Applying Lemma \ref{emb} gives
\begin{equation}\label{2}
\begin{split}
\int_{B_j} \big| v^{\frac{\beta}{p}} \psi_j \big|^{p\kappa} \, dx
&\leq C \Bigg( \int_{B_j} \big| \nabla( v^{\frac{\beta}{p}} \psi_j ) \big|^p \, dx \Bigg)^{\kappa} \\
&\leq C \Bigg( \int_{B_j} \big| \nabla(v^{\frac{\beta}{p}}) \big|^p \psi_j^p \, dx \Bigg)^{\kappa}
+ C \Bigg( \int_{B_j} |\nabla \psi_j|^p v^{\beta} \, dx \Bigg)^{\kappa} \\
&=: I + J,
\end{split}
\end{equation}
where $C = C(n,p) > 0$.

\medskip
\noindent\textbf{Estimate of $I$.}  
Using estimate \eqref{negeng2} from Lemma \ref{inveng}, we obtain
\begin{equation}\label{estII}
\begin{split}
\int_{B_j} \psi_j^p \Big| \nabla \big( v^{\frac{\beta}{p}} \big) \Big|^p \, dx
&\leq C \Bigg\{ \int_{B_j} v^{\beta} |\nabla \psi_j|^p \, dx 
+ \int_{B_j}\int_{B_j} (v^{\beta}(x) + v^{\beta}(y)) |\psi_j(x)-\psi_j(y)|^p \, d\mu \\
&\quad + \Bigg( \sup_{x \in \mathrm{supp}\,\psi_j} \int_{\mathbb{R}^n \setminus B_j} \frac{dy}{|x-y|^{n+ps}}
+ d^{1-p} \int_{\mathbb{R}^n \setminus B_R(x_0)} \frac{u_-(y)^{p-1}}{|y-x_0|^{n+ps}} \, dy \Bigg) 
\int_{B_j} v^{\beta} \psi_j^p \, dx \\
&\quad + d^{1-p} \|f\|_{L^{q/p}(B_R(x_0))} \Big( \delta \| \psi v^{\frac{\beta}{p}} \|_{L^{p\kappa}(B_j)}^p 
+ \delta^{-\frac{n}{q-n}} \| \psi_j v^{-\frac{\alpha}{p}} \|_{L^p(B_j)}^p \Big) \Bigg\},
\end{split}
\end{equation}
where $C = C(n,p,s,C_1,C_2,\La,\beta) > 0$, bounded provided $\beta$ remains away from $p-1$.

Proceeding exactly as in the proof of Lemma \ref{Cor}, we arrive at
\begin{equation}\label{5}
\begin{split}
\Bigg( \fint_{B_{j+1}} v^{\beta \kappa} \, dx \Bigg)^{\frac{1}{\beta \kappa}}
&\leq 
\Bigg( \frac{C (2\kappa)^{j(n+ps+p)}}{(\sigma - \sigma')^p} 
\fint_{B_j} v^{\beta} \, dx \Bigg)^{\frac{1}{\beta}},
\end{split}
\end{equation}
using the fact that 
$\frac{r_j^{-p\kappa} |B_j|^{\kappa}}{|B_{j+1}|}$ is bounded independently of $j, r, \sigma, \sigma'$.  
Here $C = C(n,p,s,C_1,C_2,\La,\beta) > 1$ is bounded if $\beta$ stays away from $p-1$.

\medskip  
To apply Moser iteration to \eqref{5}, we fix $\lambda > \mu$ and choose $m$ such that 
$\mu \kappa^{m-1} \leq \lambda \leq \mu \kappa^m$.  
Let $\rho_0 \leq \mu$ with $\lambda = \kappa^m \rho_0$, and set $\rho_j = \kappa^j \rho_0$ for $j=0,1,\ldots,m$.  
Then, iterating \eqref{5}, we obtain
\begin{equation}\label{6}
\begin{split}
\Bigg( \fint_{B_m} v^{\lambda} \, dx \Bigg)^{\frac{1}{\lambda}}
&\leq \Bigg( \frac{C (2\kappa)^m}{\sigma - \sigma'} \Bigg)^{\frac{n+ps+p}{\rho_{m-1}}}
\Bigg( \fint_{B_{m-1}} v^{\rho_{m-1}} \, dx \Bigg)^{\frac{1}{\rho_{m-1}}} \\
&\leq \vdots \\
&\leq \Bigg( \frac{C_{\mathrm{prod}}(m)}{(\sigma - \sigma')^{\kappa^*}}
\fint_{B_{\sigma r}(x_0)} v^{\rho_0} \, dx \Bigg)^{\frac{1}{\rho_0}},
\end{split}
\end{equation}
where
\[
C_{\mathrm{prod}}(m) = C^{\kappa^*} \prod_{j=0}^{m-1} (2\kappa)^{\frac{(n+ps+p)(j+1)}{\kappa^j}},
\qquad 
\kappa^* = (n+ps+p) \sum_{j=0}^{m-1} \kappa^{-j}
= \frac{(n+ps+p)\kappa (1 - \kappa^{-m})}{\kappa - 1}.
\]

Note that the constant $C$ depends on $\lambda$ due to the singularity of the constant in Lemma \ref{inveng} at $\epsilon=0$.  
Since $C_{\mathrm{prod}}(m)$ is uniformly bounded in $m$, applying H\"older’s inequality to \eqref{6} yields
\begin{equation}\label{66}
\begin{split}
\Bigg( \fint_{B_{\sigma' r}(x_0)} v^{\lambda} \, dx \Bigg)^{\frac{1}{\lambda}}
&\leq 
\Bigg( \frac{C}{(\sigma - \sigma')^{\kappa^*}} \Bigg)^{\frac{1}{\rho_0}}
\Bigg( \fint_{B_{\sigma r}(x_0)} v^{\mu} \, dx \Bigg)^{\frac{1}{\mu}}.
\end{split}
\end{equation}

Finally, since $\mu \kappa^{m-1} \leq \rho_0 \kappa^m$, we deduce $\rho_0 \geq \frac{\mu}{\kappa}$, and hence the desired estimate \eqref{revest} follows with
\[
\theta = \frac{(n+ps+p)\kappa^2}{\kappa - 1}.
\]
\end{proof}

\subsection{Logarithmic estimate for supersolutions}
\begin{Lemma}\label{logest2}
Suppose $0 < r \leq 1$ with $r < \frac{R}{2}$. Let $u$ be a weak supersolution of \eqref{meqn} such that $u \geq 0$ in $B_R(x_0) \subset \Omega$. Define $v = u + d$, where $d$ satisfies \eqref{d}. Let $\psi \in C_c^\infty(B_{\frac{3r}{2}}(x_0))$ be such that
\[
0 \leq \psi \leq 1, 
\qquad 
|\nabla \psi| \leq \frac{c}{r} \ \text{in } B_{\frac{3r}{2}}(x_0),
\qquad 
\psi \equiv 1 \ \text{in } B_r(x_0),
\]
for some constant $c = c(n,p) > 0$. Then there exists a constant $c = c(n,p,q,s,C_1,C_2,\La) > 0$ such that
\[
\mu_1 := \big| \{x \in B_r(x_0) : \log v + b > a\} \big|
\;\leq\; C \frac{|B_{\frac{r}{2}}(x_0)|}{a^p}, 
\quad \forall a > 0,
\]
and
\[
\mu_2 := \big| \{x \in B_r(x_0) : -\log v - b > a\} \big|
\;\leq\; C \frac{|B_{\frac{r}{2}}(x_0)|}{a^p}, 
\quad \forall a > 0,
\]
where
\begin{equation}\label{b}
b = - (\log v)_{\psi,B_{\frac{3r}{2}}(x_0)}
= - \frac{\int_{B_{\frac{3r}{2}}(x_0)} \psi^p \log v \, dx}
{\int_{B_{\frac{3r}{2}}(x_0)} \psi^p \, dx}.
\end{equation}
\end{Lemma}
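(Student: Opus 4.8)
The plan is to use the weighted Poincaré inequality (Lemma~\ref{wgtPoin}) together with the logarithmic energy estimate (Lemma~\ref{loglem} / Corollary~\ref{logapp1}) in a way entirely analogous to the classical Moser argument, where the function $w := \log v + b$ plays the role of a $\mathrm{BMO}$-type function on $B_{\frac{3r}{2}}(x_0)$. First I would observe that, because $d$ satisfies \eqref{d}, the two error terms on the right of \eqref{loglemeqn} are each dominated by $C r^{n-p}$: indeed $d^{1-p} R^{-p} \mathrm{Tail}(u_-;x_0,R)^{p-1} \le r^{-p}$ by the definition of $d$ and a short manipulation of the exponents, and $d^{1-p} R^{\frac{n(q-p)}{q}} \|f\|_{L^{q/p}(B_R(x_0))} \le r^{n-p}$ for the same reason (this is exactly the content already packaged in Corollary~\ref{logapp1}). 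Hence with $\psi$ as in the statement (so $\psi \equiv 1$ on $B_r(x_0)$, $|\nabla\psi| \le c/r$, supported in $B_{\frac{3r}{2}}(x_0)$) we get
\[
\int_{B_{\frac{3r}{2}}(x_0)} |\nabla \log v|^p \,\psi^p \, dx \;\le\; C r^{n-p}.
\]

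Next I would apply the weighted Poincaré inequality with weight $\phi = \psi^p$ (which is radially decreasing and $\equiv 1$ on $B_{\frac r2}(x_0)$ after replacing $\psi$ by a standard radial cutoff, as is implicit), to obtain
\[
\int_{B_{\frac{3r}{2}}(x_0)} \big| \log v - (\log v)_{\psi^p, B_{\frac{3r}{2}}(x_0)} \big|^p \psi^p \, dx
\;\le\; C r^p \int_{B_{\frac{3r}{2}}(x_0)} |\nabla \log v|^p \psi^p \, dx
\;\le\; C r^n.
\]
Since $b = -(\log v)_{\psi^p, B_{\frac{3r}{2}}(x_0)}$ by \eqref{b} and $\psi \equiv 1$ on $B_r(x_0)$, restricting the integral to $B_r(x_0)$ gives $\int_{B_r(x_0)} |\log v + b|^p \, dx \le C r^n \le C' |B_{\frac r2}(x_0)|$. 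Now for the first bound I estimate, for any $a > 0$,
\[
\mu_1 \, a^p \;=\; a^p \big| \{x \in B_r(x_0) : \log v + b > a\} \big|
\;\le\; \int_{\{\log v + b > a\}} |\log v + b|^p \, dx
\;\le\; \int_{B_r(x_0)} |\log v + b|^p \, dx
\;\le\; C |B_{\tfrac r2}(x_0)|,
\]
which is $\mu_1 \le C |B_{\frac r2}(x_0)|/a^p$; the bound for $\mu_2$ follows identically by applying the same Chebyshev argument to the set where $-(\log v + b) > a$, using $|-\log v - b|^p = |\log v + b|^p$.

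The main obstacle — and the only genuinely delicate point — is the bookkeeping that turns the raw output of Lemma~\ref{loglem} into the clean bound $C r^{n-p}$: one must carefully check that the exponents $p'$, $\frac{p-n}{p-1}$, $\frac{n(q-p)}{q(p-1)}$ appearing in \eqref{d} are exactly the ones that make $d^{1-p}$ cancel the tail and source contributions down to the scale $r^{n-p}$, including tracking the power $p-1$ throughout. A secondary (but routine) point is ensuring the cutoff $\psi$ may simultaneously be taken radially decreasing with $\psi \equiv 1$ on $B_{\frac r2}(x_0)$ so that Lemma~\ref{wgtPoin} applies with $\phi = \psi^p$; this is harmless since any admissible $\psi$ in the statement can be compared with such a radial cutoff, and $\psi^p$ inherits the needed monotonicity and normalization. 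Once these are in hand, the Chebyshev step is immediate and symmetric in the two inequalities.
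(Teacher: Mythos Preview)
Your proposal is correct and follows essentially the same route as the paper: bound $\int_{B_{3r/2}} |\nabla \log v|^p \psi^p \,dx \le C r^{n-p}$ via Lemma~\ref{loglem} and the choice of $d$, feed this into the weighted Poincar\'e inequality (Lemma~\ref{wgtPoin}) with weight $\phi=\psi^p$ on $B_{3r/2}(x_0)$ to obtain $\int_{B_r(x_0)}|\log v + b|^p\,dx \le C r^n$, and conclude by Chebyshev. Your discussion of the exponent bookkeeping and of the radial-monotonicity requirement on $\psi$ needed for Lemma~\ref{wgtPoin} is in fact more careful than the paper's own write-up, which invokes these points without comment.
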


\begin{proof}
Let $f = \log v + b$. By applying the weighted Poincar\'e inequality in Lemma \ref{wgtPoin} with $\phi = \psi^p$, there exists a constant $C = C(n,p,q,s,C_1,C_2,\La) > 0$ such that
\begin{equation}\label{7}
\int_{B_r(x_0)} |f|^p \, dx 
\;\leq\; \int_{B_{\frac{3r}{2}}(x_0)} |f|^p \psi^p \, dx
\;\leq\; C r^p \int_{B_{\frac{3r}{2}}(x_0)} |\nabla \log v|^p \psi^p \, dx 
\;\leq\; C r^n,
\end{equation}
where the last inequality follows from Lemma \ref{loglem} together with the assumption on $d$.  

Using \eqref{7} and applying Chebyshev’s inequality, for any $a > 0$ we obtain
\[
\mu_1 + \mu_2 \;\leq\; \frac{2}{a^p} \int_{B_r(x_0)} |f|^p \, dx 
\;\leq\; C \frac{|B_{\frac{r}{2}}(x_0)|}{a^p},
\]
for some constant $C = C(n,p,q,s,C_1,C_2,\La) > 0$. This proves the lemma.
\end{proof}
\subsection{Tail estimate}
\begin{Lemma}\label{tailest}
Let $u$ be a weak solution of \eqref{meqn} such that $u \geq 0$ in $B_R(x_0) \subset \Omega$. Then there exists a positive constant $c = c(n,p,q,s,C_1,C_2,\Lambda)$ such that  
$$
\mathrm{Tail}(u_+; x_0, r) \leq c \sup_{B_r(x_0)} u 
+ c r^{p'} R^{-p'} \, \mathrm{Tail}(u_-; x_0, R) 
+ c r^{\frac{p-n}{p-1}} R^{\frac{n(q-p)}{q(p-1)}} \, \| f \|_{L^{\frac{q}{p}}(B_R(x_0))}^{\frac{1}{p-1}},
$$
whenever $0 < r < R$ with $r \in (0,1]$.
\end{Lemma}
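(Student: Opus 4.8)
The plan is to obtain the estimate from a single well-chosen test of the equation, exploiting the sign of the nonlocal interaction between $B_r(x_0)$ (where $u\ge 0$) and the exterior. We may assume $M:=\sup_{B_r(x_0)}u<\infty$, otherwise there is nothing to prove; if $M=0$ we run the argument with $M$ replaced by an arbitrary $M'>0$ (legitimate since $u\le M'$ on $B_r(x_0)$) and let $M'\downarrow0$ at the end. Fix $\eta\in C_c^\infty(B_{9r/16}(x_0))$ with $0\le\eta\le1$, $\eta\equiv1$ on $B_{r/2}(x_0)$ and $|\nabla\eta|\le c/r$, set $w:=2M-u$ (so $M\le w\le 2M$ on $B_r(x_0)$), and test \eqref{weaksubsupsoln} (as an equality, $u$ being a weak solution) with the admissible nonnegative function $\phi:=\eta^p w$. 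This gives $I+J=K$ with $I$, $J$, $K$ the local, nonlocal and source integrals of Definition~\ref{subsupsolution}.

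For the local term, $\nabla\phi=p\eta^{p-1}w\,\nabla\eta-\eta^p\nabla u$, so \eqref{A} and Young's inequality give $I\le-\tfrac{C_1}{2}\int\eta^p|\nabla u|^p\,dx+CM^pr^{n-p}\le CM^pr^{n-p}$, while Hölder's inequality gives $|K|\le 2M\|f\|_{L^1(B_r(x_0))}\le cM\,r^{n(q-p)/q}\|f\|_{L^{q/p}(B_R(x_0))}$. I split $J=J_{\mathrm{diag}}+2J_{\mathrm{cr}}$, the diagonal part over $B_{5r/8}(x_0)^2$ and the cross part pairing $x\in B_{5r/8}(x_0)$ with $y\in\mathbb{R}^n\setminus B_{5r/8}(x_0)$. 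Since $u(x)-u(y)=-(w(x)-w(y))$ on $B_{5r/8}(x_0)^2$, we have $J_{\mathrm{diag}}=-\int\!\int J_p(w(x)-w(y))(\eta^p(x)w(x)-\eta^p(y)w(y))\,d\mu$, and the standard Caccioppoli-type algebraic inequality (as in the proof of Lemma~\ref{subeng}) bounds this above by $C\int\!\int\max\{w(x),w(y)\}^p|\eta(x)-\eta(y)|^p\,d\mu\le CM^p\int_{B_r}\!\int_{B_r}\tfrac{|\eta(x)-\eta(y)|^p}{|x-y|^{n+ps}}\,dx\,dy\le CM^pr^{n-ps}\le CM^pr^{n-p}$ (using $r\le1$ and $ps<p$; it is essential here to use the $p$-th power $|\eta(x)-\eta(y)|^p$ rather than the first power, whose integral diverges when $ps\ge1$).

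The heart of the matter is $J_{\mathrm{cr}}$. Here I use the elementary inequality $J_p(a-b)\le C_pa^{p-1}+C_p(b_-)^{p-1}-c_p(b_+)^{p-1}$, valid for $a\ge0$, $b\in\mathbb{R}$ with constants depending only on $p$ (checked by distinguishing the signs of $b$ and of $a-b$), applied with $a=u(x)\ge0$, $b=u(y)$. Multiplying by $\phi(x)\ge0$ and integrating gives $J_{\mathrm{cr}}\le \mathrm{A}+\mathrm{B}+\mathrm{C}$, where: $\mathrm{A}\lesssim M^pr^{n-ps}\lesssim M^pr^{n-p}$ since $\phi,u\le2M$ on $B_r$ and $\int_{\mathbb{R}^n\setminus B_{5r/8}}K(x,\cdot)\lesssim r^{-ps}$ for $x\in\operatorname{supp}\eta$; $\mathrm{B}\lesssim M\,r^nR^{-p}\,\mathrm{Tail}(u_-;x_0,R)^{p-1}$ since $u_-\equiv0$ on $B_R(x_0)$ and $|x-y|\simeq|x_0-y|$ for $x\in\operatorname{supp}\eta\subset B_R(x_0)$, $y\notin B_R(x_0)$; and the favorable term $\mathrm{C}\le-cM\,r^{n-p}\,\mathrm{Tail}(u_+;x_0,r)^{p-1}$, obtained by restricting the $x$-integral to $B_{r/2}(x_0)$ (where $\phi\ge M$), using $|x-y|\simeq|x_0-y|$ for $x\in B_{r/2}(x_0)$, $y\notin B_{5r/8}(x_0)$, and discarding the nonnegative contribution of the annulus $B_r(x_0)\setminus B_{5r/8}(x_0)$. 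Feeding everything into $I+J=K$ yields $2cM\,r^{n-p}\,\mathrm{Tail}(u_+;x_0,r)^{p-1}\le-2\mathrm{C}\le CM^pr^{n-p}+CM\,r^nR^{-p}\,\mathrm{Tail}(u_-;x_0,R)^{p-1}+cM\,r^{n(q-p)/q}\|f\|_{L^{q/p}(B_R(x_0))}$; dividing by $2cM\,r^{n-p}$, taking $(p-1)$-th roots (subadditivity of $t\mapsto t^{1/(p-1)}$ up to a $p$-dependent constant), and simplifying the exponents using $p'=p/(p-1)$, $r\le R$ and $r\le1$ gives the stated estimate with $M=\sup_{B_r(x_0)}u$.

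The main obstacle is precisely this sign bookkeeping in $J_{\mathrm{cr}}$: extracting $-\mathrm{Tail}(u_+;x_0,r)^{p-1}$ with a definite positive constant while retaining the unfavorable contributions (the $M^p$ term, the $\mathrm{Tail}(u_-)$ term and the source term) with the correct powers of $r$ and $R$. This forces the three cutoff radii ($r/2$, $9r/16$, $5r/8$) to be chosen so that all the distance comparisons $|x-y|\simeq|x_0-y|$ and all the kernel tail bounds $\int K(x,\cdot)\lesssim r^{-ps}$ hold with explicit constants, and — in the regime $ps\ge1$ — to handle the diagonal nonlocal term through the $p$-th power Gagliardo quantity rather than a divergent first-power integral.
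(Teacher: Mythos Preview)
Your proof is correct and follows essentially the same route as the paper's: test the equation with a cutoff times $2M-u$ (the paper uses $u-2M$, which is just a global sign), split the nonlocal term into a diagonal and a cross part, and extract $\mathrm{Tail}(u_+;x_0,r)^{p-1}$ with a favorable sign from the cross interaction while the diagonal, local, $\mathrm{Tail}(u_-)$ and source contributions land on the right-hand side. The paper defers the estimates for $I_1,I_2,I_3$ to \cite[p.~5410]{GKtams}, whereas you spell out the pointwise inequality $J_p(a-b)\le C_p a^{p-1}+C_p(b_-)^{p-1}-c_p(b_+)^{p-1}$ explicitly and use three nested radii $r/2<9r/16<5r/8$ instead of just $r/2<r$; these are purely cosmetic differences.
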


\begin{proof}
Let $M = \sup_{B_r(x_0)} u$ and let $\psi \in C_c^\infty(B_r(x_0))$ be such that $0 \leq \psi \leq 1$ in $B_r(x_0)$, with $\psi \equiv 1$ in $B_{\frac{r}{2}}(x_0)$ and $|\nabla \psi| \leq \frac{8}{r}$ in $B_r(x_0)$. Setting $w = u - 2M$ and choosing $\varphi = w \psi^p$ as a test function in \eqref{weaksubsupsoln}, we obtain
\begin{equation}\label{tailtst}
I_1 + I_2 + I_3 - I_4 = 0,
\end{equation}
where
$$
I_1 = \int_{B_r(x_0)} \mathcal{A}(x,\nabla u) \nabla (w \psi^p) \, dx,
$$

$$
I_2 = \int_{B_r(x_0)} \int_{B_r(x_0)} J_p(u(x)-u(y)) \big(w(x)\psi^p(x) - w(y)\psi^p(y)\big) \, d\mu,
$$

$$
I_3 = 2 \int_{B_r(x_0)} \int_{\mathbb{R}^n \setminus B_r(x_0)} J_p(u(x)-u(y)) w(x) \psi^p(x) \, d\mu, 
\quad \text{and} \quad
I_4 = \int_{B_r(x_0)} f w \psi^p \, dx.
$$

Taking into account that $r\in(0,1]$ and proceeding exactly as in the estimates of $I_i$, $i=1,2,3$, in \cite[page 5410]{GKtams}, we obtain
$$
I_1 + I_2 + I_3 \geq c M r^{-p} \, \mathrm{Tail}(u_+; x_0, r)^{p-1} |B_r(x_0)|
- c M R^{-p} \, \mathrm{Tail}(u_-; x_0, R)^{p-1} |B_r(x_0)|
- c M^p r^{-p} |B_r(x_0)|,
$$
for some positive constant $c = c(n,p,s,C_1,C_2,\Lambda)$. Moreover, we observe that
$$
I_4 \leq 3M \, {\| f \|_{L^{\frac{q}{p}}(B_R(x_0))}} \, |B_r(x_0)|^{\frac{q-p}{q}}.
$$

Combining the above estimates in \eqref{tailtst}, and using that $0<r<R$, the result follows.
\end{proof}

\section{Proof of the main results}
\subsection{Proof of Theorem \ref{mthm} (Using John-Nirenberg lemma)}
We establish the result in three steps.

\medskip
\noindent\textbf{Step 1.}  
We claim that there exist constants $\epsilon = \epsilon(n,p,q,s,C_1,C_2,\Lambda) > 0$ and $c = c(n,p,q,s,C_1,C_2,\Lambda,\epsilon) > 0$ such that
\begin{equation}\label{neg-poseqn}
\Bigg( \fint_{B_r(x_0)} v^{\epsilon} \, dx \Bigg)^{\frac{1}{\epsilon}}
\;\leq\; 
c \Bigg( \fint_{B_r(x_0)} v^{-\epsilon} \, dx \Bigg)^{-\frac{1}{\epsilon}},
\end{equation}
where $v = u + d$ with $d$ satisfying \eqref{d}.  

To prove this, set $w = \log v$. By the classical Poincar\'e inequality,
\begin{equation}\label{Poinapp1}
\int_{B_r(x_0)} |w(x) - (w)_{1,B_r(x_0)}|^p \, dx
\;\leq\; c_1 r^p \int_{B_r(x_0)} |\nabla w|^p \, dx,
\end{equation}
for some constant $c_1 = c_1(n,p) > 0$. Using H\"older’s inequality together with \eqref{logappeqn} and \eqref{Poinapp1}, we deduce
\[
\fint_{B_r(x_0)} |w(x) - (w)_{1,B_r(x_0)}| \, dx \;\leq\; K,
\]
where $K = K(n,p,q,s,C_1,C_2,\Lambda) > 0$. By Theorem \ref{JN-lem}, there exists a constant $\nu = \nu(n) > 0$ such that
\[
\fint_{B_r(x_0)} e^{\nu |w(x) - (w)_{1,B_r(x_0)}|} \, dx \;\leq\; 2.
\]
Hence,
\[
\fint_{B_r(x_0)} e^{\frac{\nu w}{K}} \, dx 
\cdot \fint_{B_r(x_0)} e^{-\frac{\nu w}{K}} \, dx 
\;\leq\; 4.
\]
This yields \eqref{neg-poseqn} with $\epsilon = \frac{\nu}{K}$ and $c = 4^{1/\epsilon}$.

\medskip
\noindent\textbf{Step 2.}  
Choosing $\sigma = 1$, $\sigma' = \frac{1}{2}$, and $t = \epsilon$ in Lemma \ref{Cor}, and combining with \eqref{neg-poseqn}, we obtain
\begin{equation}\label{pwh1}
\Bigg( \fint_{B_r(x_0)} v^{\epsilon} \, dx \Bigg)^{\frac{1}{\epsilon}}
\;\leq\; c \Big( \inf_{B_{\frac{r}{2}}(x_0)} u + d \Big),
\end{equation}
for some constant $c = c(n,p,q,s,C_1,C_2,\La,\epsilon) > 0$.

\medskip
\noindent\textbf{Step 3.}  
Let $\eta \in (0,(p-1)\kappa)$.  

If $\eta < \epsilon$, applying H\"older’s inequality to \eqref{pwh1} gives
\begin{equation}\label{pwh2}
\Bigg( \fint_{B_r(x_0)} u^{\eta} \, dx \Bigg)^{\frac{1}{\eta}}
\;\leq\; c \Big( \inf_{B_{\frac{r}{2}}(x_0)} u + d \Big),
\end{equation}
where $c = c(n,p,q,s,C_1,C_2,\La) > 0$.

If $\eta > \epsilon$, then choosing $\mu = \epsilon$, $\lambda = \eta$, $\sigma = 1$, $\sigma' = \frac{1}{2}$ in Lemma \ref{RevH}, and combining with \eqref{pwh1}, we again obtain
\begin{equation}\label{fwh}
\Bigg( \fint_{B_r(x_0)} u^{\eta} \, dx \Bigg)^{\frac{1}{\eta}}
\;\leq\; c \Big( \inf_{B_{\frac{r}{2}}(x_0)} u + d \Big),
\end{equation}
where $c = c(n,p,q,s,C_1,C_2,\La,\eta) > 0$.

Now, for any $\delta > 0$, we choose
\[
d = r^{p'} R^{-p'} \mathrm{Tail}(u_-; x_0,R) 
+ r^{\frac{p-n}{p-1}} R^{\frac{n(q-p)}{q(p-1)}} 
\|f\|_{L^{\frac{q}{p}}(B_R(x_0))}^{\frac{1}{p-1}} + \delta.
\]
Then from \eqref{pwh2} and \eqref{fwh}, for any $\delta > 0$,
\begin{equation}\label{pwh3}
\Bigg( \fint_{B_{\frac{r}{2}}(x_0)} u^{\eta} \, dx \Bigg)^{\frac{1}{\eta}}
\;\leq\; c \Big( \inf_{B_{\frac{r}{2}}(x_0)} u 
+ r^{p'} R^{-p'} \mathrm{Tail}(u_-; x_0,R)
+ r^{\frac{p-n}{p-1}} R^{\frac{n(q-p)}{q(p-1)}} 
\|f\|_{L^{\frac{q}{p}}(B_R(x_0))}^{\frac{1}{p-1}} + \delta \Big),
\end{equation}
where $c = c(n,p,q,s,C_1,C_2,\La,\eta) > 0$. Letting $\delta \to 0$ in \eqref{pwh3} gives the desired estimate \eqref{mthmest}. This completes the proof.

\subsection{Proof of Theorem \ref{mthm} (Using Bombieri--Giusti lemma)}
Let $v = u + d$, where $d$ satisfies \eqref{d}. Assume $\frac{1}{2} \leq \sigma' < \sigma \leq 1$. Define $U_m = B_{mr}(x_0)$ for every $m \in [\frac{1}{2},1]$. Set
\[
w_1 = e^{-b} v^{-1}, 
\qquad 
w_2 = e^b v,
\]
where
\[
b = - \frac{\int_{B_{\frac{3r}{2}}(x_0)} \psi^p \log v \, dx}
{\int_{B_{\frac{3r}{2}}(x_0)} \psi^p \, dx},
\]
as in \eqref{b}.  

By Lemma \ref{Cor}, there exist constants $C = C(n,p,q,s,C_1,C_2,\La) > 0$ and $\theta = \theta(n,p,q,s) > 0$ such that
\begin{equation}\label{invap1}
\sup_{U_{\sigma'}} w_1 
\;\leq\; \Bigg( \frac{C}{(\sigma - \sigma')^{\theta}} \fint_{U_\sigma} w_1^t \, dx \Bigg)^{\frac{1}{t}}
\end{equation}
for every $t > 0$. Moreover, by Lemma \ref{logest2}, there exists a constant $C = C(n,p,q,s,C_1,C_2,\La) > 0$ such that
\begin{equation}\label{logestap1}
\big| \{ x \in U_1 : \log w_1 > a \} \big|
\;\leq\; C \frac{|U_{\frac{1}{2}}|}{a^p}, 
\quad \forall a > 0.
\end{equation}
Hence, by Lemma \ref{Bom-Giu},
\begin{equation}\label{BGap1}
\sup_{U_{\frac{1}{2}}} w_1 \;\leq\; C,
\end{equation}
for some $C = C(n,p,q,s,C_1,C_2,\La) > 0$.  

Similarly, by Lemma \ref{RevH}, there exist constants $C = C(n,p,q,s,C_1,C_2,\La,\eta) > 0$ and $\theta = \theta(n,p,s) > 0$ such that
\begin{equation}\label{revHap1}
\Bigg( \fint_{U_{\sigma'}} w_2^\eta \, dx \Bigg)^{\frac{1}{\eta}}
\;\leq\; 
\Bigg( \frac{C}{(\sigma - \sigma')^{\theta}} \fint_{U_\sigma} w_2^\mu \, dx \Bigg)^{\frac{1}{\mu}},
\end{equation}
for every $0 < \mu < \eta < (p-1)\kappa$. In addition, by Lemma \ref{logest2}, there exists a constant $C = C(n,p,q,s,C_1,C_2,\La) > 0$ such that
\begin{equation}\label{logestap2}
\big| \{ x \in U_1 : \log w_2 > a \} \big|
\;\leq\; C \frac{|U_{\frac{1}{2}}|}{a^p}, 
\quad \forall a > 0.
\end{equation}
Thus, by Lemma \ref{Bom-Giu},
\begin{equation}\label{BGap2}
\Bigg( \fint_{U_{\frac{1}{2}}} w_2^\eta \, dx \Bigg)^{\frac{1}{\eta}}
\;\leq\; C,
\end{equation}
for some $C = C(n,p,q,s,C_1,C_2,\La,\eta) > 0$.  

Now, for any $\delta > 0$, we choose
\[
d = r^{p'} R^{-p'} \mathrm{Tail}(u_-; x_0,R) 
+ r^{\frac{p-n}{p-1}} R^{\frac{n(q-p)}{q(p-1)}} 
\|f\|_{L^{\frac{q}{p}}(B_R(x_0))}^{\frac{1}{p-1}} + \delta.
\]
Multiplying \eqref{BGap1} and \eqref{BGap2} and then letting $\delta \to 0$ yields the desired result \eqref{mthmest}. This completes the proof.

\subsection{Proof of Theorem \ref{mthmbdd}}
Taking into account Lemma \ref{subeng}, the proof follows the lines of the proof of \cite[Theorem 4.1]{GKtams}. More precisely, in this way we obtain the estimate  
$$
\sup_{B_{\frac{r}{2}}(x_0)} u \leq \bar{k},
$$
where  
$$
\bar{k} = \delta \, \mathrm{Tail}\Big(u_+; x_0, \frac{r}{2}\Big) 
+ c_0^{\frac{1}{\beta}} b^{\frac{1}{\beta^2}} 
\left( \fint_{B_r(x_0)} u_+^p \, dx \right)^{\frac{1}{p}} 
+ r^{\frac{p-n}{p-1}} R^{\frac{n(q-p)}{q(p-1)}} 
\| f \|_{L^{\frac{q}{p}}(B_R(x_0))}^{\frac{1}{p-1}},
$$
with  
$$
c_0 = c(n,p,q,s,C_1,C_2,\Lambda) \, \delta^{\frac{(1-p)\kappa}{p}}, 
\quad b = 2^{\left(\frac{n+ps+p-1}{p} + \frac{\kappa-1}{\kappa}\right)\kappa}, 
\quad \beta = \kappa - 1, 
\quad \kappa = \frac{n}{n-p}.
$$

\subsection{Proof of Theorem \ref{mthm2}}
Let $0 < \rho < r$. Then, by Theorem \ref{mthmbdd} and Lemma \ref{tailest}, for every $\delta \in (0,1]$, there exists a positive constant $c = c(n,p,q,s,C_1,C_2,\Lambda)$ such that  
\begin{equation}\label{mthm2-1}
\begin{split}
\sup_{B_{\frac{\rho}{2}}(x_0)} u 
&\leq \delta \, \mathrm{Tail}\Big(u_+; x_0, \frac{\rho}{2}\Big) 
+ c \, \delta^{-\gamma} \left( \fint_{B_{\rho}(x_0)} u^p \, dx \right)^{\frac{1}{p}} 
+ \rho^\frac{p-n}{p-1}R^\frac{n(q-p)}{q(p-1)}\|f\|_{L^{\frac{q}{p}}(B_\rho(x_0))}^{\frac{1}{p-1}} \\
&\leq \delta \Big( c \sup_{B_\rho(x_0)} u + c \rho^{p'} R^{-p'} \mathrm{Tail}(u_-; x_0, R)\Big) + c \, \delta^{-\gamma} \left( \fint_{B_\rho(x_0)} u^p \, dx \right)^{\frac{1}{p}} \\
&\qquad + c\rho^\frac{p-n}{p-1}R^\frac{n(q-p)}{q(p-1)} \|f\|_{L^{\frac{q}{p}}(B_R(x_0))}^{\frac{1}{p-1}},
\end{split}
\end{equation}
where $\gamma = \frac{(p-1)\kappa}{p(\kappa-1)}$ with $\kappa = \frac{n}{n-p}$.  

Let $\frac{1}{2} \leq \sigma' < \sigma \leq 1$ and $\rho = (\sigma - \sigma')r$. Using a covering argument, we obtain  
\begin{align*}
\sup_{B_{\sigma' r}(x_0)} u 
&\leq c \, \frac{\delta^{-\gamma}}{(\sigma - \sigma')^{\frac{n}{p}}} 
\left( \fint_{B_{\sigma r}(x_0)} u^p \, dx \right)^{\frac{1}{p}} 
+ c \delta \sup_{B_{\sigma r}(x_0)} u 
+ c \delta r^{p'} R^{-p'} \mathrm{Tail}(u_-; x_0, R) \\
&\qquad+ c\frac{r^\frac{p-n}{p-1}R^\frac{n(q-p)}{q(p-1)}}{(\sigma-\sigma')^\frac{n-p}{p-1}}\|f\|_{L^{\frac{q}{p}}(B_R(x_0))}^{\frac{1}{p-1}} \\
&\leq c \, \frac{\delta^{-\gamma}}{(\sigma - \sigma')^{\frac{n}{p}}} 
\big( \sup_{B_{\sigma r}(x_0)} u \big)^{\frac{p-t}{p}} 
\left( \fint_{B_{\sigma r}(x_0)} u^t \, dx \right)^{\frac{1}{t}} 
+ c \delta \sup_{B_{\sigma r}(x_0)} u 
+ c \delta r^{p'} R^{-p'} \mathrm{Tail}(u_-; x_0, R) \\
&\qquad + c\frac{r^\frac{p-n}{p-1}R^\frac{n(q-p)}{q(p-1)}}{(\sigma-\sigma')^\frac{n-p}{p-1}}\|f\|_{L^{\frac{q}{p}}(B_R(x_0))}^{\frac{1}{p-1}},
\end{align*}
for every $t \in (0,p)$ with a constant $c = c(n,p,q,s,C_1,C_2,\Lambda) > 1$.  

Applying Young's inequality with exponents $\frac{p}{t}$ and $\frac{p}{p-t}$ to the above estimate and choosing $\delta = \frac{1}{4c}\in(0,1]$, we obtain  
\begin{equation}\label{heqn}
\sup_{B_{\sigma' r}(x_0)} u 
\leq \frac{1}{2} \sup_{B_{\sigma r}(x_0)} u 
+ \frac{c}{(\sigma - \sigma')^m} \Bigg\{\Big( \fint_{B_r(x_0)} u^t \, dx \Big)^{\frac{1}{t}} + r^\frac{p-n}{p-1}R^\frac{n(q-p)}{q(p-1)}\|f\|_{L^{\frac{q}{p}}(B_R(x_0))}^{\frac{1}{p-1}}\Bigg\}
+ c r^{p'} R^{-p'} \mathrm{Tail}(u_-; x_0, R),
\end{equation}
where $m=\max\{\frac{n}{t},\frac{p-n}{p-1}\}$ for every $t \in (0,p)$ with a constant $c = c(n,p,q,s,C_1,C_2,\Lambda) > 0$.  

Using Lemma \ref{ite} in the above estimate, it follows that  
$$
\sup_{B_{\frac{r}{2}}(x_0)} u 
\leq c \left( \fint_{B_r(x_0)} u^t \, dx \right)^{\frac{1}{t}} 
+ c r^{p'} R^{-p'} \mathrm{Tail}(u_-; x_0, R) 
+ cr^\frac{p-n}{p-1}R^\frac{n(q-p)}{q(p-1)}\|f\|_{L^{\frac{q}{p}}(B_R(x_0))}^{\frac{1}{p-1}},
$$
for every $t \in (0,p)$ with a constant $c = c(n,p,q,s,C_1,C_2,\Lambda,t) > 0$.  

Let $\epsilon>0$, then choosing  
$$
d = r^{p'} R^{-p'} \mathrm{Tail}(u_-; x_0, R) 
+ r^{\frac{p-n}{p-1}} R^{\frac{n(q-p)}{q(p-1)}} 
\|f\|_{L^{\frac{q}{p}}(B_R(x_0))}^{\frac{1}{p-1}} + \epsilon,
$$
in the estimate \eqref{pwh1} and letting $\epsilon \to 0$, and combining the resulting estimate with the above inequality \eqref{heqn}, the theorem follows.

\section{Acknowledgement}
The author thanks IISER Berhampur for the seed grant: IISERBPR/RD/OO/2024/15, Date: February
08, 2024.

\medskip

\noindent {\textsf{Prashanta Garain\\Department of Mathematical Sciences\\
Indian Institute of Science Education and Research Berhampur\\ Permanent Campus, At/Po:-Laudigam,\\
Dist.-Ganjam, Odisha, India-760003, 
}\\ 
\textsf{e-mail}: pgarain92@gmail.com\\

\end{document}